\documentclass[reqno,12pt]{amsart}
 \usepackage{amsmath}
 \usepackage{amsthm}
 \usepackage{amssymb}
 \usepackage{latexsym,longtable}
 \usepackage{graphicx}
 \usepackage{multicol}
 \usepackage{mathrsfs}
 \usepackage{young}
 \usepackage[vcentermath,enableskew,stdtext]{youngtab}
 \usepackage[left=1.1in,right=1.1in,top=1.1in,bottom=1.14in]{geometry}
 \usepackage[all]{xy}
    \SelectTips{cm}{10}     
    \everyxy={<2.5em,0em>:} 
\usepackage{epigraph}
 \usepackage{fancyhdr}      
    \linespread{1.03}
 \usepackage{multicol}
 \usepackage{multirow}
 \usepackage{enumitem}
 \usepackage{bm}
 \usepackage{stmaryrd}
 \usepackage{etex}
 \usepackage{pgf}
\usepackage{tikz}
 \usepackage{float}
 \usepackage{caption 
}
 \usepackage{array}
 \usepackage{colortbl}
 \usepackage{mathtools}
 \restylefloat{figure}
 \usetikzlibrary{shapes}
 \usetikzlibrary{arrows}
 \usetikzlibrary{snakes}
\usepackage{url}

\usepackage{amsfonts,epsfig,color}
\setlength{\parskip}{0.13cm}
\setlength{\baselineskip}{10pt}
\makeatletter
\newcommand*{\centerfloat}{%
  \parindent \z@
  \leftskip \z@ \@plus 1fil \@minus \textwidth
  \rightskip\leftskip
  \parfillskip \z@skip}
\makeatother

\theoremstyle{plain}
\newtheorem{theorem}{Theorem}[section]
\newtheorem{lemma}[theorem]{Lemma}
\newtheorem{corollary}[theorem]{Corollary}
\newtheorem{proposition}[theorem]{Proposition}
\newtheorem{conjecture}[theorem]{Conjecture}
\newtheorem{problem}[theorem]{Problem}

\theoremstyle{definition}
\newtheorem{definition}[theorem]{Definition}

\newtheorem{remark}[theorem]{Remark}
\newtheorem{example}[theorem]{Example}

\numberwithin{equation}{section}



\newcommand{\ignore}[1]{}















\newcommand{\QQ}{\ensuremath{\mathbb{Q}}}


\newcommand{\be}{\begin{equation}}
\newcommand{\ee}{\end{equation}}


















\newcommand{\crc}[1]{\ensuremath{\overline{#1}}\vphantom{\underline{\overline{#1}}}}














\newlength{\cellsize}
\cellsize=2.5ex
\newcommand\tableau[1]{
\vcenter{
\let\\=\cr
\baselineskip=-16000pt \lineskiplimit=16000pt \lineskip=0pt
\halign{&\tableaucell{##}\cr#1\crcr}}}


\newcommand{\tableaucell}[1]{{%
\def \arg{#1}\def \void{}%
\ifx \void \arg
\vbox to \cellsize{\vfil \hrule width \cellsize height 0pt}%
\else \unitlength=\cellsize
\begin{picture}(1,1)
\put(0,0){\makebox(1,1){$#1\vphantom{\crc{#1}}$}}
\put(0,0){\line(1,0){1}}
\put(0,1){\line(1,0){1}}
\put(0,0){\line(0,1){1}}
\put(1,0){\line(0,1){1}}
\end{picture}%
\fi}}

\newcommand\boldtableau[1]{
\vcenter{
\let\\=\cr
\baselineskip=-16000pt \lineskiplimit=16000pt \lineskip=0pt
\halign{&\boldtableaucell{##}\cr#1\crcr}}}


\newcommand{\boldtableaucell}[1]{{%
\def \arg{#1}\def \void{}%
\ifx \void \arg
\vbox to \cellsize{\vfil \hrule width \cellsize height 0pt}%
\else \unitlength=\cellsize
\begin{picture}(1,1)
\put(0,0){\makebox(1,1){$\mathbf{#1\vphantom{\crc{#1}}}$}}
\put(0,0){\line(1,0){1}}
\put(0,1){\line(1,0){1}}
\put(0,0){\line(0,1){1}}
\put(1,0){\line(0,1){1}}
\end{picture}%
\fi}}

\newcommand{\mybar}[1]{\crc{#1}}
\newcommand{\esup}[2]{\ensuremath{\mybar{e}_{#1}(#2)}}

\setlength{\cellsize}{2.2ex}

\title{Rules of Three for commutation relations}

\keywords{Noncommutative symmetric function, commutation relations,
elementary symmetric function, Rule of Three, Dehn diagram.}

\begin{document}

\author{Jonah Blasiak}
\address{Department of Mathematics, Drexel University, Philadelphia, PA 19104}
\email{jblasiak@gmail.com}

\author{Sergey Fomin}
\address{Department of Mathematics, University of Michigan, Ann Arbor,
  MI 48109}
\email{fomin@umich.edu}

\thanks{Partially supported by NSF grants DMS-14071174 (J.~B.)
and DMS-1361789 (S.~F.).}

\date{August 16, 2016}

\dedicatory{Dedicated to Efim Zelmanov on his 60th birthday}

\subjclass[2010]{Primary
16S99. 
Secondary
05E05. 
}

\begin{abstract}
We study the phenomenon in which commutation relations for
sequences of elements in a ring
are implied by similar relations
for subsequences involving
at most three indices at a time.
\end{abstract}%

\maketitle
\setlength{\cellsize}{2.2ex}

\setlength{\epigraphwidth}{270pt}
\epigraph{
          What I tell you three times is true. 
}{\ \\[-.05in]
\textit{The Hunting of
    the Snark}, by Lewis Carroll}


\section*{Introduction}

In this paper, we investigate the following surprisingly widespread
phenomenon which we call \emph{The Rule of Three}:
in order for a particular kind of commutation relation to hold for
subsequences of elements of a ring labeled by any subset of indices,
it is enough that these relations hold for subsets of size one, two, and three.

Here is a typical ``Rule of Three'' statement.
Let $g_1,\ldots, g_N, h_1,\ldots, h_N$ be invertible elements in an
associative ring. Then the following are equivalent (cf.\ Theorem~\ref{t half group ROT}):
\begin{itemize}
\item
for any subsequence of indices $1\le s_1< \cdots< s_m\le N$,
the element $g_{s_m}\cdots g_{s_1}$
commutes with both $h_{s_m}\cdots h_{s_1}$ and $h_{s_m}+\cdots
+h_{s_1}$;
\item
the above condition holds for all subsequences of length $m\le 3$.
\end{itemize}

We establish many results of this form, including
\begin{itemize}
\item Rules of Three for noncommutative elementary symmetric functions
 (Section~\ref{s main results 1});
\item Rules of Three for generating functions over rings
(Section~\ref{s generating functions over rings});
\item Rules of Three for sums and products (Section~\ref{sec:rot-invertible}).
\end{itemize}

Proofs are given in
Sections~\ref{sec:rot-groups}--\ref{sec:proof-super}.
For reference,
Theorems \ref{t pairs commute v0} and \ref{th:mrot-linear-uu} are proved in Section \ref{sec:proof:u=v};
Theorem~\ref{t half ROT} is proved in Section \ref{s proof of theorem t half ROT};
Theorems~\ref{t es commute uv 2}, \ref{t half group ROT}, \ref{t 2variable linear strengthened}, \ref{t half quadratic half group ROT}, and~\ref{t half group ROTv2} are proved in Section \ref{sec:main-proofs}; and
Theorem~\ref{t super} is proved in Section \ref{sec:proof-super}.


\section{Rules of Three for noncommutative symmetric functions}
\label{s main results 1}
Let
$\mathbf{u}=(u_1,\dots,u_N)$ be an ordered $N$-tuple of elements in a
ring~$R$.
(We informally view $u_1,\dots,u_N$ as ``noncommuting variables.'')
For an integer~$k$,
the \emph{noncommutative elementary symmetric function}
$e_k(\mathbf{u})\in R$ is defined by
\begin{align}
\label{eq:e_k(u)}
e_k(\mathbf{u})=\sum_{N \ge i_1 > i_2 > \cdots > i_k \ge 1} u_{i_1}u_{i_2}\cdots u_{i_k}\,.
\end{align}
(By convention,
$e_0(\mathbf{u})\!=\!1$ and $e_k(\mathbf{u})\! =\! 0$ if $k<0$ or $k>N$.)
More generally, for a subset $S\subset \{1,\dots,N\}$,
we denote
\begin{equation}\label{e ek def}
e_k(\mathbf{u}_S)=\sum_{\substack{i_1 > i_2 > \cdots > i_k
    \\ i_1,\dots,i_k \in S}}u_{i_1}u_{i_2}\cdots u_{i_k}\,.
\end{equation}
Again,
$e_0(\mathbf{u}_S)\!=\!1$, and $e_k(\mathbf{u}_S) \!=\! 0$ unless
$0\le k\le |S|$.
(Here $|S|$ is the cardinality of~$S$.)

\begin{theorem}[The Rule of Three for noncommutative elementary
      symmetric functions]
\label{t es commute uv 2}
Let $\mathbf{u}=(u_1,\dots,u_N)$
and $\mathbf{v}=(v_1,\dots,v_N)$ be ordered $N$-tuples of elements in
a ring~$R$.
Then the following conditions are equivalent:
\begin{itemize}
\item
the noncommutative elementary symmetric functions
$e_k(\mathbf{u}_S)$ and $e_\ell(\mathbf{v}_S)$ commute with each other,
for any integers $k$ and~$\ell$ and any subset $S \subset\{1,\dots,N\}$:
\begin{equation}
\label{eq:e-commute-uv}
e_k(\mathbf{u}_S)\,e_\ell(\mathbf{v}_S)=e_\ell(\mathbf{v}_S)\,e_k(\mathbf{u}_S);
\end{equation}
\item
the commutation relation \eqref{eq:e-commute-uv} holds
for $|S|\le 3$ and any $k, \ell$;
\item
the commutation relation \eqref{eq:e-commute-uv} holds
for $|S|\le 3$ and $k\ell\le 3$.
\end{itemize}
\end{theorem}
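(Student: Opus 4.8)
The three conditions (1), (2), (3) are listed from strongest to weakest, so (1)$\Rightarrow$(2)$\Rightarrow$(3) is immediate and the real content is to run the chain backwards. My plan is to package the $e_k$'s into generating functions, obtain (1) from (2) by quoting the group-theoretic Rule of Three (Theorem~\ref{t half group ROT}), and close the remaining gap (3)$\Rightarrow$(2) by a finite computation involving only subsets of size at most three.

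First I would reformulate everything inside the power series ring $R[[s,t]]$, setting $g_i=1+s\,u_i$ and $h_i=1+t\,v_i$, which are units there. For a subset $S=\{s_1<\dots<s_m\}$ one has the factorizations $\sum_{k\ge 0}e_k(\mathbf{u}_S)\,s^k=g_{s_m}g_{s_{m-1}}\cdots g_{s_1}$ and $\sum_{\ell\ge 0}e_\ell(\mathbf{v}_S)\,t^\ell=h_{s_m}h_{s_{m-1}}\cdots h_{s_1}$. Since $s$ and $t$ are central, the family of relations ``$e_k(\mathbf{u}_S)$ commutes with $e_\ell(\mathbf{v}_S)$ for all $k,\ell$'' is equivalent to the single relation ``$g_{s_m}\cdots g_{s_1}$ commutes with $h_{s_m}\cdots h_{s_1}$''. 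So (1) and (2) say, respectively, that the $g$-product commutes with the $h$-product for all subsequences, and for all subsequences of length $\le 3$. Using $g_{s_m}+\cdots+g_{s_1}=m+s\,e_1(\mathbf{u}_S)$ and likewise on the $v$-side, condition (3) says exactly that, for $|S|\le 3$, the $g$-product commutes with the $h$-sum and the $g$-sum commutes with the $h$-product.

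Next I would prove (2)$\Rightarrow$(1). Assuming (2), for each $S$ with $|S|\le 3$ the relations ``$e_k(\mathbf{u}_S)$ commutes with $e_1(\mathbf{v}_S)$'' (for all $k$) are among those asserted by (2), so the $g$-product commutes with both the $h$-product and the $h$-sum. That is precisely the hypothesis of Theorem~\ref{t half group ROT} for the units $g_1,\dots,g_N,h_1,\dots,h_N$ of $R[[s,t]]$; its conclusion is that the $g$-product commutes with the $h$-product for every subsequence, which is (1). (If the reformulation of (3) in the previous paragraph matches the hypothesis of one of the mixed sum/product Rules of Three proved elsewhere in the paper, this also yields (3)$\Rightarrow$(1) in a single step; absent that, one proceeds as below.)

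The crux is (3)$\Rightarrow$(2), which is a purely finite matter since only subsets of size $\le 3$ occur. I would induct on $|S|$, so that when treating a given $S$ I may use the full commutation ``$e_k(\mathbf{u}_T)$ commutes with $e_\ell(\mathbf{v}_T)$'' for every proper subset $T\subsetneq S$. The case $|S|\le 1$ is vacuous. For $|S|=2$, say $S=\{i,j\}$, the only relation in (2) not already in (3) is $[u_ju_i,\,v_jv_i]=0$; expanding via the Leibniz rule $[x,yz]=[x,y]z+y[x,z]$, this follows from $[u_i,v_i]=[u_j,v_j]=0$, from the identity $[u_i,v_j]=-[u_j,v_i]$ read off the $(1,1)$ relation, and from the $(1,2)$ and $(2,1)$ relations. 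For $|S|=3$, the relations still to be established are the $(k,\ell)$ with $k\ell\ge 4$, namely $(2,2)$, $(2,3)$, $(3,2)$, $(3,3)$; in generating-function language these say that the coefficient of $s^kt^\ell$ in $[\,g_{s_3}g_{s_2}g_{s_1},\,h_{s_3}h_{s_2}h_{s_1}\,]$ vanishes for those exponents, and here (3) already supplies the vanishing for $k\ell\le 3$ while the inductive hypothesis (via the case $|S|=2$) supplies that each two-factor subproduct of the $g$'s commutes with the corresponding subproduct of the $h$'s. The remaining four identities should then drop out by repeatedly peeling the outermost factor off a product ($g_{s_3}$ or $h_{s_3}$ on the left, $g_{s_1}$ or $h_{s_1}$ on the right) and substituting the known relations. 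This final bookkeeping — a short list of identities in the free ring on $u_{s_1},u_{s_2},u_{s_3},v_{s_1},v_{s_2},v_{s_3}$ modulo the relations of (3) — is the step I expect to be the main obstacle.
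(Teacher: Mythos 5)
Your reductions (1)$\Rightarrow$(2)$\Rightarrow$(3) and (2)$\Rightarrow$(1) are fine: the generating-function dictionary is exactly the one the paper uses, and invoking Theorem~\ref{t half group ROT} for the units $g_i=1+xu_i$, $h_i=1+yv_i$ of $R[[x,y]]$ does give (2)$\Rightarrow$(1) (your $|S|=2$ computation also checks out). The genuine gap is the step you yourself flag as ``the main obstacle'': (3)$\Rightarrow$(2) for $|S|=3$, i.e.\ deriving the commutations with $k\ell\ge 4$ from those with $k\ell\le 3$. ``Peeling the outermost factor and substituting the known relations'' is not an argument here, and there is concrete reason to doubt that any pure Leibniz manipulation succeeds: Remark~\ref{rem invertibility needed} shows that for four indices the analogous identity is \emph{not} in the two-sided ideal generated by the hypothesised relations, and even for three indices the authors verify the ideal membership only by a noncommutative Gr\"obner-basis computation. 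Every hand proof of the three-index case in the paper uses the invertibility of the factors in an essential way --- the identity \eqref{eq:[cba,CBA]} behind Lemmas~\ref{lem:cba*CBA} and~\ref{lem:strong-commute-abc} contains $h_b^{-1}$ --- together with the auxiliary identities $[h_a,g_c]\,g_bg_a=g_cg_b\,[g_a,h_c]$ and $h_ch_b\,[h_a,g_c]=[h_a,g_c]\,h_bh_a$ extracted from the sum relations via Lemmas~\ref{l deg1 super 1}--\ref{l for super}. None of that machinery, nor a substitute for it, appears in your sketch, so the hardest identity of the theorem is left unproved.

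Your parenthetical guess is in fact the paper's actual proof and the way to repair this. The reformulation of (3) you wrote down --- for $|S|\le 3$, the $g$-product commutes with the $h$-sum, the $h$-product commutes with the $g$-sum, and $[\sum g_i,\sum h_i]=0$ --- is verbatim the hypothesis \eqref{e linear 1}--\eqref{e linear 3b} of Theorem~\ref{t 2variable linear strengthened}, whose conclusion $[g_S,h_S]=0$ for all $S$ gives (3)$\Rightarrow$(1) in one stroke. Committing to that route (the translation of hypotheses is a coefficient comparison in $x^ky^\ell$) closes the gap; as written, the proposal does not.
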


\begin{remark}
Explicitly, Theorem~\ref{t es commute uv 2} asserts that the
commutation relations~\eqref{eq:e-commute-uv}
hold for all $k$ and~$\ell$ and all subsets $S \subset \{1,\dots,N\}$
if and only if the following relations hold:
\begin{align}
&e_1(\mathbf{u}_S)e_1(\mathbf{v}_S) =
  e_1(\mathbf{v}_S)e_1(\mathbf{u}_S) \qquad \text{for }\, 1 \le |S|
  \le 3, \label{e uv 12vars AB}\\
&e_2(\mathbf{u}_S)e_1(\mathbf{v}_S) =
  e_1(\mathbf{v}_S)e_2(\mathbf{u}_S) \qquad \text{for }\, 2 \le |S|
  \le 3, \label{e uuv 23vars AB}\\
&e_1(\mathbf{u}_S)e_2(\mathbf{v}_S) =
  e_2(\mathbf{v}_S)e_1(\mathbf{u}_S) \qquad \text{for }\, 2 \le |S|
  \le 3, \label{e uvv 23vars AB}\\
&e_3(\mathbf{u}_S)e_1(\mathbf{v}_S) =
  e_1(\mathbf{v}_S)e_3(\mathbf{u}_S) \qquad \text{for }\, |S| =
  3, \label{e uuuv 23vars AB}\\
&e_1(\mathbf{u}_S)e_3(\mathbf{v}_S) =
  e_3(\mathbf{v}_S)e_1(\mathbf{u}_S) \qquad \text{for }\, |S| =
  3. \label{e uvvv 23vars AB}
\end{align}
(Actually, it suffices to require \eqref{e uv 12vars AB} for $|S|\le 2$,
but this is not so important.)
It is rather miraculous that 
\eqref{e uv 12vars AB}--\eqref{e uvvv 23vars AB} imply
the relations
\begin{align}
&e_2(\mathbf{u}_S)e_2(\mathbf{v}_S) = e_2(\mathbf{v}_S)e_2(\mathbf{u}_S)
\qquad \text{for }\, 2 \le |S|
  \le 3, \\
&e_2(\mathbf{u}_S)e_3(\mathbf{v}_S) =
  e_3(\mathbf{v}_S)e_2(\mathbf{u}_S) \qquad \text{for }\, |S| =
  3, \\
&e_3(\mathbf{u}_S)e_2(\mathbf{v}_S) =
  e_2(\mathbf{v}_S)e_3(\mathbf{u}_S) \qquad \text{for }\, |S| =
  3, \\
&e_3(\mathbf{u}_S)e_3(\mathbf{v}_S) =
  e_3(\mathbf{v}_S)e_3(\mathbf{u}_S) \qquad \text{for }\, |S| =
  3,
\end{align}
in addition to all relations \eqref{eq:e-commute-uv} for $|S|\ge 4$.
\end{remark}


In the case of a single $N$-tuple of ``noncommuting variables''
$u_1=v_1, \dots, u_N=v_N\,$, we obtain the following corollary.

\begin{corollary}[{\cite{Kirillov-Notes}, \cite{BF}}]
\label{cor intro ed commute}
Let $R$ be a ring, and let $\mathbf{u}\!=\!(u_1,\dots,u_N)$
be an ordered $N$-tuple of elements of~$R$.
Then the following are equivalent:
\begin{itemize}
\item
the noncommutative elementary symmetric functions
$e_k(\mathbf{u}_S)$ and $e_\ell(\mathbf{u}_S)$ commute with each other,
for any integers $k$ and~$\ell$ and any subset $S
\subset\{1,\dots,N\}$:
\begin{equation}
\label{eq:e-commute-u}
e_k(\mathbf{u}_S)e_\ell(\mathbf{u}_S)=e_\ell(\mathbf{u}_S)e_k(\mathbf{u}_S);
\end{equation}
\item
the following special cases of~\eqref{eq:e-commute-u} hold:
\begin{align}
&e_1(\mathbf{u}_S)e_2(\mathbf{u}_S) =
  e_2(\mathbf{u}_S)e_1(\mathbf{u}_S) \qquad \text{ for } 2 \le |S| \le
  3, \label{e u12 23vars}\\
&e_1(\mathbf{u}_S)e_3(\mathbf{u}_S) =
  e_3(\mathbf{u}_S)e_1(\mathbf{u}_S) \qquad \text{ for } |S| =
  3. \label{e u13 23vars}
\end{align}
\end{itemize}
\end{corollary}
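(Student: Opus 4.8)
The plan is to obtain Corollary~\ref{cor intro ed commute} as the diagonal case $\mathbf{v} = \mathbf{u}$ of Theorem~\ref{t es commute uv 2}, which is stated above and which we may assume. The implication from \eqref{eq:e-commute-u} for all $k,\ell,S$ down to its special instances \eqref{e u12 23vars}--\eqref{e u13 23vars} is trivial, so the only content is the converse, and I would extract that directly from the two-family statement.

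First I would set $\mathbf{v} = \mathbf{u}$ in Theorem~\ref{t es commute uv 2}. Then the full commutation condition \eqref{eq:e-commute-uv} becomes \eqref{eq:e-commute-u}, so what remains is to identify what the third bullet of Theorem~\ref{t es commute uv 2} --- the hypothesis that \eqref{eq:e-commute-uv} holds for $|S| \le 3$ and $k\ell \le 3$, spelled out in the Remark as \eqref{e uv 12vars AB}--\eqref{e uvvv 23vars AB} --- becomes under this specialization. A short check shows: \eqref{e uv 12vars AB} reads $e_1(\mathbf{u}_S)\,e_1(\mathbf{u}_S) = e_1(\mathbf{u}_S)\,e_1(\mathbf{u}_S)$ and drops out; \eqref{e uuv 23vars AB} and \eqref{e uvv 23vars AB} both collapse to $e_1(\mathbf{u}_S)\,e_2(\mathbf{u}_S) = e_2(\mathbf{u}_S)\,e_1(\mathbf{u}_S)$ for $2 \le |S| \le 3$, which is \eqref{e u12 23vars}; and \eqref{e uuuv 23vars AB} and \eqref{e uvvv 23vars AB} both collapse to $e_1(\mathbf{u}_S)\,e_3(\mathbf{u}_S) = e_3(\mathbf{u}_S)\,e_1(\mathbf{u}_S)$ for $|S| = 3$, which is \eqref{e u13 23vars}. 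Thus the hypothesis \eqref{e u12 23vars}--\eqref{e u13 23vars} of the corollary is precisely the $\mathbf{v} = \mathbf{u}$ instance of the third bullet of Theorem~\ref{t es commute uv 2}, and the equivalence of its first and third bullets then gives \eqref{eq:e-commute-u} for all $k$, $\ell$, and $S$, which would complete the argument.

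Since all the real work lives inside Theorem~\ref{t es commute uv 2}, there is no genuine obstacle in the corollary itself; the only point requiring care is the bookkeeping --- making sure it is the $k\ell \le 3$ form of the hypothesis (not the stronger ``all $k$, $\ell$'' form) that collapses to the two stated relations, and that no instance among \eqref{e uv 12vars AB}--\eqref{e uvvv 23vars AB} has been missed. If instead one wanted a self-contained proof, the natural route would be the same induction on $|S|$ that underlies Theorem~\ref{t es commute uv 2}: the cases $|S| \le 3$ are where the ``miracle'' of the Remark happens and would have to be verified by hand from \eqref{e u12 23vars}--\eqref{e u13 23vars}, while the inductive step merely transfers commutation from $S \setminus \{\max S\}$ to $S$ via the recursion $e_k(\mathbf{u}_S) = e_k(\mathbf{u}_{S \setminus \{\max S\}}) + u_{\max S}\,e_{k-1}(\mathbf{u}_{S \setminus \{\max S\}})$. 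Routing through the two-family theorem avoids re-doing that hard case, so that is the route I would take.
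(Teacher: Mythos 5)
Your proposal is correct and matches the paper's own proof exactly: the paper also obtains the corollary by specializing Theorem~\ref{t es commute uv 2} to $\mathbf{v}=\mathbf{u}$, observing that \eqref{e uv 12vars AB} becomes trivial while \eqref{e uuv 23vars AB}--\eqref{e uvvv 23vars AB} collapse to \eqref{e u12 23vars}--\eqref{e u13 23vars}. Your bookkeeping of which hypotheses survive the specialization is accurate, so nothing further is needed.
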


\begin{proof}
This is a special case of Theorem~\ref{t es commute uv 2}.
Note that when $\mathbf{u}=\mathbf{v}$, the condition \eqref{e uv 12vars AB}
is trivial,
whereas \eqref{e uuv 23vars AB}--\eqref{e uvvv 23vars AB} become
\eqref{e u12 23vars}--\eqref{e u13 23vars}.
\end{proof}

\begin{remark}
\label{r similar results elem}
Corollary~\ref{cor intro ed commute} is equivalent to a result by
A.~N.~Kirillov~\cite[Theorem~2.24]
{Kirillov-Notes}; the above version appeared in our previous paper \cite[Theorem~2.3]{BF}.
It generalizes similar results obtained in \cite{BD0graph, FG, LamRibbon, LS, NS}.
See \cite[Remark 2.2]{BF} for additional discussion.
\end{remark}

\begin{remark}\label{r nc schur}
Corollary~\ref{cor intro ed commute} and similar results
serve as the starting
point for the theory~of \emph{noncommutative Schur functions},
which aims to produce positive combinatorial formulae for Schur
expansions of various classes of symmetric functions.
This theory originated in~\cite{FG}, building off the
work of Lascoux and Sch\"utzenberger on the plactic algebra~\cite{LS, Sch}.
It was \linebreak[3]
later adapted to study LLT polynomials~\cite{LamRibbon}
and $k$-Schur functions~\cite{LamAffineStanley};
other variations appeared in
\cite{BenedettiBergeron, Kirillov-Notes, NS}.
%
Further recent work includes the papers~\cite{BD0graph, BLamLLT, BF},
which advance the theory to encompass Lam's work \cite{LamRibbon} and
incorporate ideas of Assaf~\cite{SamiOct13}. \linebreak[3]
One of the main outcomes of this approach is a proof of Haglund's
conjecture on 3-column Macdonald polynomials~\cite{BLamLLT}.
\end{remark}

\vspace{-2mm}

Theorem~\ref{t es commute uv 2} can be generalized to the setting of
``noncommutative supersymmetric polynomials.''
Let us fix an arbitrary partition of the ordered alphabet $\{1 < \cdots < N\}$
into \emph{unbarred} and \emph{barred indices}.
The \emph{noncommutative super elementary symmetric function}
$\mybar{e}_k({\mathbf{u}})$
is defined by the following variation of~\eqref{eq:e_k(u)}:
\begin{align}
\label{eq:ebar_k(u)}
\mybar{e}_k(\mathbf{u})=
\sum_{\substack{N \ge i_1 \ge i_2 \ge \cdots \ge i_k
    \ge 1\\ \text{$i_j$ unbarred $\Rightarrow i_j>i_{j+1}$}}}
u_{i_1}\cdots u_{i_k}\,.
\end{align}
We similarly define the elements $\mybar{e}_k({\mathbf{u}}_S)$
associated to sub-alphabets $S\subset\{1 < \cdots < N\}$.

\begin{theorem}
\label{t super}
Let $R$ be a ring, and let
$\mathbf{u}\!=\!(u_1,\dots,u_N)$ and  $\mathbf{v}\!=\!(v_1,\dots,v_N)$
be ordered $N$-tuples of elements of~$R$.
Then the following are equivalent:
\begin{itemize}
\item
$\esup{k}{\mathbf{u}_S}$ and $\esup{\ell}{\mathbf{v}_S}$ commute, 
for any 
$k$ and~$\ell$ and any subset $S
\subset\{1,\dots,N\}$:
\begin{equation}
\label{eq:e-commute-u-super}
\esup{k}{\mathbf{u}_S}\esup{\ell}{\mathbf{v}_S}=\esup{\ell}{\mathbf{u}_S}\esup{k}{\mathbf{v}_S};
\end{equation}
\item
the following special cases of~\eqref{eq:e-commute-u-super} hold:
\begin{align}
&\esup{k}{\mathbf{u}_S}\esup{1}{\mathbf{v}_S} =
  \esup{1}{\mathbf{u}_S}\esup{k}{\mathbf{v}_S} \qquad
\text{ for  $|S| \le  3$ and  $k \ge 1$};  \label{e 1d super}\\
&\esup{1}{\mathbf{u}_S}\esup{\ell}{\mathbf{v}_S} =
  \esup{\ell}{\mathbf{u}_S}\esup{1}{\mathbf{v}_S} \qquad
\text{ for  $|S| \le  3$ and  $\ell \ge 1$}.  \label{e 1d super2}
\end{align}
\end{itemize}
\end{theorem}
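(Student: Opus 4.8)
The plan is to translate everything into generating functions. In the power series ring $R[[t]]$ put, for $S\subseteq\{1,\dots,N\}$,
\[
\bar E_S(t)=\sum_{k\ge 0}\esup{k}{\mathbf{u}_S}\,t^{k},\qquad
\bar F_S(t)=\sum_{\ell\ge 0}\esup{\ell}{\mathbf{v}_S}\,t^{\ell}.
\]
Grouping the words in~\eqref{eq:ebar_k(u)} by the multiplicity with which each index occurs (the $t$'s are central, so they can be pulled out) yields the product formula
\[
\bar E_S(t)=\prod_{j\in S}^{\searrow} G^u_j(t),\qquad
G^u_j(t)=\begin{cases} 1+t\,u_j, & j\text{ unbarred},\\[2pt] (1-t\,u_j)^{-1}, & j\text{ barred},\end{cases}
\]
where $\prod^{\searrow}$ means the product taken in decreasing order of $j$ (largest index on the left), and similarly for $\bar F_S(t)$ in the $v_j$'s. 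Since two power series in independent central variables $s,t$ commute if and only if all of their coefficients commute, the relations~\eqref{eq:e-commute-u-super} are equivalent to $\bar E_S(t)\,\bar F_S(s)=\bar F_S(s)\,\bar E_S(t)$ in $R[[s,t]]$ for every $S$, while the hypotheses~\eqref{e 1d super}--\eqref{e 1d super2} say precisely that, for every $S$ with $|S|\le 3$, the series $\bar E_S(t)$ commutes with $\esup{1}{\mathbf{v}_S}=\sum_{j\in S}v_j$ and the series $\bar F_S(s)$ commutes with $\esup{1}{\mathbf{u}_S}=\sum_{j\in S}u_j$.

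I would not try to reduce this to Theorem~\ref{t es commute uv 2} by splitting each barred index into many equal copies: $e_k$ of $m$ equal copies of an element $w$ equals $\binom{m}{k}w^{k}$, not $w^{k}$, so the naive substitution does not reproduce the $h$-type factor $(1-tw)^{-1}$. Instead I would argue in two stages. \emph{Stage 1 (base case).} Fix $S$ with $|S|\le 3$ and upgrade ``$\bar E_S(t)$ commutes with $\sum_{j\in S}v_j$ and $\bar F_S(s)$ commutes with $\sum_{j\in S}u_j$'' to ``$\bar E_S(t)$ commutes with $\bar F_S(s)$'', i.e.\ to~\eqref{eq:e-commute-u-super} for this $S$ and all $k,\ell$. \emph{Stage 2 (globalization).} Deduce~\eqref{eq:e-commute-u-super} for every $S$ from the case $|S|\le 3$, using the peeling identities
\[
\bar E_S(t)=G^u_{\max S}(t)\;\bar E_{S\setminus\{\max S\}}(t)=\bar E_{S\setminus\{\min S\}}(t)\;G^u_{\min S}(t)
\]
(and their $\bar F$ analogues) that fall out of the product formula, paralleling the treatment of $|S|\ge 4$ in Theorem~\ref{t es commute uv 2}. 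Note that Theorem~\ref{t super} contains Theorem~\ref{t es commute uv 2} as the all-unbarred special case (there $\esup{k}{\mathbf{u}_S}=0$ for $k>|S|$, and the hypotheses~\eqref{e 1d super}--\eqref{e 1d super2} collapse to the $k\ell\le 3$ hypothesis), so I expect the two arguments to run in tandem.

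The hard part is Stage~1 --- the ``miracle'' pointed out after Theorem~\ref{t es commute uv 2}: from the single instances $k=1$ and $\ell=1$ one must extract an \emph{infinite} family of commutation relations, since the degrees $k,\ell$ are unbounded even when $|S|\le 3$ (a barred index may be repeated arbitrarily often). The natural engine is the degree recursion coming from $\esup{1}{\mathbf{u}_S}\,\esup{k-1}{\mathbf{u}_S}=\sum_{j\in S}u_j\,\esup{k-1}{\mathbf{u}_S}$: prepending $u_j$ to a word counted by $\esup{k-1}{\mathbf{u}_S}$ produces $\esup{k}{\mathbf{u}_S}$ plus correction terms in which the new letter violates the admissibility condition of~\eqref{eq:ebar_k(u)}, and for $|S|\le 3$ these corrections can be expressed through lower-degree super elementary functions of $S$ and of its subsets; an induction on $k+\ell$, with the $k=1$ and $\ell=1$ cases supplied by hypothesis, should then close. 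Making the bookkeeping of these corrections precise --- tracking exactly which admissibility constraints break, in a form that is stable under the induction --- is the delicate point, and is where the paper's combinatorial apparatus (Dehn diagrams) does its work; a secondary subtlety in Stage~2 is that it cannot be a purely formal induction, because commutation of $\esup{k}{\mathbf{u}_A}$ with $\esup{\ell}{\mathbf{v}_B}$ for $A\ne B$ genuinely fails under these hypotheses, so the peeling must be organized to invoke only equal-subset relations together with the product formula.
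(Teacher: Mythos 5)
Your generating-function setup coincides with the paper's: taking $g_i=1+xu_i$ for $i$ unbarred and $g_i=(1-xu_i)^{-1}$ for $i$ barred gives $g_S=\sum_k x^k\,\esup{k}{\mathbf{u}_S}$, the hypotheses become ``$g_S$ commutes with $\sum_{i\in S}v_i$ and $h_S$ commutes with $\sum_{i\in S}u_i$ for $|S|\le 3$,'' and you are right that splitting a barred index into equal copies does not reduce the statement to Theorem~\ref{t es commute uv 2}. From that point on, however, the proposal is a plan rather than a proof: both of your stages end with the operative step deferred (``should then close,'' ``is the delicate point,'' ``paralleling the treatment of $|S|\ge 4$''), and what is deferred is precisely the content of the theorem. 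Your proposed Stage~1 engine --- an induction on $k+\ell$ driven by the recursion for $\esup{1}{\mathbf{u}_S}\,\esup{k-1}{\mathbf{u}_S}$ with combinatorial correction terms --- is not the paper's method and there is no indication it closes; and the appeal to Dehn diagrams is misplaced, since those are used only for the group-theoretic results of Section~\ref{sec:rot-groups}, not in the proof of Theorem~\ref{t super}.

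The paper's argument (Lemma~\ref{l super gen fun}) is a single induction on the size of the index set that treats $|S|\le 3$ and $|S|\ge 4$ by one mechanism, and it turns on three points absent from your outline. First, the hypotheses give commutation of $g_S$ only with the sum of the \emph{underlying variables} $\sum_{i\in S}v_i$, and the straightening machinery of Lemma~\ref{l for super} and Corollary~\ref{c half ROTv2} converts this into the identity $[v_m,g_n]\,g_{n-1}\cdots g_m=g_n\cdots g_{m+1}\,[g_m,v_n]$ (and its $h$/$u$ analogue). Second, to feed these into the commutator identity of Lemma~\ref{lem:strong-commute-abc-with-z} one must replace $[g_m,v_n]$ by a scalar multiple of $[g_m,h_n]$, which requires $h_n$ to be \emph{linear} in $v_n$, i.e.\ the endpoint $n$ to be unbarred; when both endpoints are barred one must first apply the order-reversing, inverting symmetry $g_i\mapsto g_{m+n-i}^{-1}$. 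Third, matching the resulting left- and right-hand sides requires the cross relation $[u_a,v_b]=[v_a,u_b]$, extracted from the $|S|\le 2$, $k=1$ instances of the hypotheses. Without these ingredients your ``peeling'' step has no way to pass from $[g_S,\sum_{i\in S}v_i]=0$ to $[g_S,h_S]=0$, so the gap is genuine.
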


We discuss the broader context for Theorem~\ref{t super}
in Remark~\ref{rem:rot-super} below.

\pagebreak[3]

\section{Rules of Three for generating functions over rings}
\label{s generating functions over rings}
Commutation relations for noncommutative elementary symmetric
functions can be reformulated as multiplicative identities for certain
elements of a polynomial ring in two (central) variables with coefficients in~$R$.
This leads to an
alternative perspective on The Rules of Three, which we discuss next.

In the rest of this paper, we repeatedly make use of the following
convenient notation.
Let $g_1,\dots,g_N$ be elements of a ring (or a monoid),
and let $S=\{s_1\!<\!\cdots\!<\!s_m\}\!\subset\!\{1,\dots,N\}$ be a subset of
indices.
We then denote
\begin{equation}
\label{eq:g_S}
g_S = g_{s_m}\cdots g_{s_1}\,.
\end{equation}
We similarly use the shorthand $h_S= h_{s_m}\cdots h_{s_1}$, etc.

\begin{corollary}
\label{cor:rule-of-3-uv-linear}
Let $R[x,y]$ be the ring of polynomials in the
formal variables $x$ and~$y$ with coefficients in a ring~$R$.
(Here $x$ and~$y$ commute with each other and with any $z\in R$.)
Let $u_1,\dots,u_N, v_1,\dots,v_N\in R$.
For $i=1,\dots,N$, set
\begin{align}
g_i = 1+x u_i \in R[x,y], \label{eq:gi-linear} \\
h_i = 1+y v_i \in R[x,y]. \label{eq:hi-linear}
\end{align}
Then the following are equivalent:
 \begin{itemize}
\item $g_S h_S = h_S g_S$ for all subsets $S\subset\{1,\dots,N\}$;
\item $g_S h_S = h_S g_S$ for all subsets $S$ of cardinality $1$, $2$,
  and~$3$.
\end{itemize}
\end{corollary}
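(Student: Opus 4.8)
The plan is to translate the statement into the language of noncommutative elementary symmetric functions and then invoke Theorem~\ref{t es commute uv 2}. The key computation is the generating-function expansion
\[
g_S = g_{s_m}\cdots g_{s_1} = (1+x u_{s_m})\cdots(1+x u_{s_1}) = \sum_{k\ge 0} x^k\, e_k(\mathbf{u}_S),
\]
which follows by expanding the product and collecting, for each $k$, the terms indexed by decreasing sequences $s_m\ge i_1>\cdots>i_k\ge s_1$ with $i_1,\dots,i_k\in S$; this is exactly the defining sum~\eqref{e ek def}. Likewise $h_S = \sum_{\ell\ge 0} y^\ell\, e_\ell(\mathbf{v}_S)$.

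First I would record that, since $x$ and $y$ are central, commuting formal variables, $R[x,y]$ is a free left $R$-module with basis $\{x^k y^\ell\}_{k,\ell\ge 0}$. Multiplying the two expansions then gives
\begin{align*}
g_S h_S &= \sum_{k,\ell\ge 0} x^k y^\ell\, e_k(\mathbf{u}_S)\,e_\ell(\mathbf{v}_S),\\
h_S g_S &= \sum_{k,\ell\ge 0} x^k y^\ell\, e_\ell(\mathbf{v}_S)\,e_k(\mathbf{u}_S).
\end{align*}
Comparing coefficients of $x^k y^\ell$ shows that $g_S h_S = h_S g_S$ holds in $R[x,y]$ if and only if $e_k(\mathbf{u}_S)\,e_\ell(\mathbf{v}_S) = e_\ell(\mathbf{v}_S)\,e_k(\mathbf{u}_S)$ for every $k,\ell\ge 0$, i.e.\ if and only if~\eqref{eq:e-commute-uv} holds for this particular $S$ and all $k,\ell$.

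With this equivalence in hand, the first bullet of the corollary (the identity $g_S h_S = h_S g_S$ for all $S\subset\{1,\dots,N\}$) is precisely the first bullet of Theorem~\ref{t es commute uv 2}, and the second bullet of the corollary (the identity only for $|S|\le 3$) is precisely the second bullet of Theorem~\ref{t es commute uv 2}. The corollary then follows immediately.

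There is essentially no obstacle here beyond the bookkeeping above: the entire content of the statement is already packaged in Theorem~\ref{t es commute uv 2}. The only point meriting a sentence of care is the passage from the single polynomial identity $g_S h_S = h_S g_S$ to the a priori stronger-looking family of scalar identities indexed by $(k,\ell)$, which is exactly where the freeness of $R[x,y]$ over $R$ and the centrality of $x$ and $y$ are used.
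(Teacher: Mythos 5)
Your proof is correct and follows exactly the paper's own argument: expand $g_S$ and $h_S$ as generating functions $\sum_k x^k e_k(\mathbf{u}_S)$ and $\sum_\ell y^\ell e_\ell(\mathbf{v}_S)$, compare coefficients of $x^k y^\ell$ to see that $g_Sh_S=h_Sg_S$ is equivalent to the family of relations~\eqref{eq:e-commute-uv}, and then invoke Theorem~\ref{t es commute uv 2}. Your extra remark about the freeness of $R[x,y]$ over $R$ is a fine (if routine) point of care that the paper leaves implicit.
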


\begin{proof}
We observe that \eqref{eq:gi-linear}--\eqref{eq:hi-linear} imply
\begin{alignat}{2}
g_S 
&= (1+x u_{s_m})\cdots (1+x u_{s_1})&&=\textstyle\sum_k
x^k \,e_k(\mathbf{u}_S), \label{eq:gS-linear} \\
h_S 
&= (1+y v_{s_m})\cdots (1+y v_{s_1})&&=\textstyle\sum_\ell
y^\ell \,e_\ell(\mathbf{v}_S). \label{eq:hS-linear}
\end{alignat}
Thus the property~\eqref{eq:e-commute-uv}
(that is, each $e_k(\mathbf{u}_S)$ commutes with each $e_\ell(\mathbf{v}_S)$)
is equivalent to saying that $g_S$ commutes with~$h_S$.
The corollary is now immediate from Theorem~\ref{t es commute uv 2}.
\end{proof}


Given the multiplicative form of the conditions $g_S h_S = h_S g_S$ in
Corollary~\ref{cor:rule-of-3-uv-linear},
it~is tempting to seek group-theoretic generalizations of the latter,
with the factors $g_i$ and~$h_i$ drawn from some (reasonably general)
group.
Unfortunately,
the purely 
group-theoretic extension of Corollary~\ref{cor:rule-of-3-uv-linear}
is false:
the relation
$g_4g_3g_2g_1h_4h_3h_2h_1 = h_4h_3h_2h_1g_4g_3g_2g_1$  
does not hold
in the group with presentation given by generators $g_1, \ldots, g_4,
h_1, \ldots, h_4$ and relations $g_Sh_S = h_Sg_S$ for  $|S| \le 3$.
(This 
follows from the fact that
replacing \eqref{eq:hi-linear} with $h_i=1+x v_i$
transforms Corollary~\ref{cor:rule-of-3-uv-linear} into a false
statement, cf.\ Example~\ref{ex:256patterns}.)

Consequently one has to introduce some
(likely nontrivial) assumptions on  the group~$G$
and/or the elements $g_i, h_i$.
Two results of this kind are stated in Section~\ref{sec:rot-groups}.
A fundamental question remains (see also Problem \ref{problem:rot:A[[x,y]]}):
\begin{problem}
\label{problem:group theoretic ROT}
Find a group-theoretic Rule of Three strong enough
to directly imply Corollary~\ref{cor:rule-of-3-uv-linear}
(or better yet, Conjecture~\ref{cj pairs commute uv} below).
\end{problem}

\pagebreak[3]

From the standpoint of potential applications,
the most important setting for ``multiplicative rules of three''
\emph{\`a~la} Corollary~\ref{cor:rule-of-3-uv-linear}
is the one where the factors $g_i, h_i$ are formal power series
in $xu_i$ and~$yv_i$, respectively.
Extensive computational evidence suggests that in this setting, the
Rule of Three always holds:

\begin{conjecture}
\label{cj pairs commute uv}
Let $R[[x,y]]$ be the ring of formal power series in the variables $x$
and~$y$ with coefficients in a $\QQ$-algebra~$R$.
Let $u_1,\dots,u_N, v_1,\dots,v_N\in R$, and
assume that $g_1, \ldots, g_N$, $h_1, \ldots, h_N \in R[[x,y]]$ are power series of the form
\begin{alignat}{5}
g_i &= 1+\alpha_{i1} xu_i &&+\alpha_{i2} (xu_i)^2 &&+\alpha_{i3}
(xu_i)^3&&+\cdots
\qquad && (\alpha_{ik}\in\QQ), \label{eq:g_i=g_i(xu_i)} \\
h_i &= 1+\beta_{i1} yv_i &&+\beta_{i2} (yv_i)^2 &&+\beta_{i3}
(yv_i)^3&&+\cdots
\qquad && (\beta_{ik}\in\QQ), \label{eq:h_i=h_i(yv_i)}
\end{alignat}
where for every~$i$, either $\alpha_{i1}$ or~$\beta_{i1}$ is nonzero.
Then the following are equivalent:
 \begin{itemize}
\item $g_S h_S = h_S g_S$ for all subsets $S\subset\{1,\dots,N\}$;
\item $g_S h_S = h_S g_S$ for all subsets $S$ of cardinality $1$, $2$,
  and~$3$.
\end{itemize}
\end{conjecture}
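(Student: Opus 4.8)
The forward implication is immediate, so the plan is to prove the converse: granting $g_Th_T=h_Tg_T$ for all $|T|\le 3$, deduce $g_Sh_S=h_Sg_S$ for every $S$. As a first step I would pass to the universal setting, taking $R$ to be the free associative $\QQ$-algebra on $u_1,\dots,u_N,v_1,\dots,v_N$ with the scalars $\alpha_{ik},\beta_{ik}$ regarded as fixed; the claim then becomes the assertion that the two-sided ideal of $R$ generated by all coefficients (in $R$) of the power series $g_Th_T-h_Tg_T$ with $|T|\le 3$ already contains every coefficient of $g_Sh_S-h_Sg_S$ for arbitrary $S$.

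The main idea is to linearize and then invoke Corollary~\ref{cor:rule-of-3-uv-linear} (equivalently Theorem~\ref{t es commute uv 2}). Writing $g_i=1+xu_i\,\tilde G_i(xu_i)$ and $h_i=1+yv_i\,\tilde H_i(yv_i)$ with $\tilde G_i,\tilde H_i\in\QQ[[t]]$ having constant terms $\alpha_{i1},\beta_{i1}$, put $\tilde u_i:=u_i\tilde G_i(xu_i)\in R[[x]]$ and $\tilde v_i:=v_i\tilde H_i(yv_i)\in R[[y]]$, so that $g_i=1+x\tilde u_i$ and $h_i=1+y\tilde v_i$. Introduce fresh central variables $\xi,\eta$, and over the base ring $R[[x,y]]$ apply Corollary~\ref{cor:rule-of-3-uv-linear} to the tuples $\tilde{\mathbf u},\tilde{\mathbf v}$ and the linear factors $1+\xi\tilde u_i$, $1+\eta\tilde v_i$. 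The substitution $\xi\mapsto x,\ \eta\mapsto y$ is a ring homomorphism $R[[x,y]][\xi,\eta]\to R[[x,y]]$ carrying these factors back to $g_i,h_i$, and via the generating-function identity $\sum_k\xi^k e_k(\tilde{\mathbf u}_S)=(1+\xi\tilde u_{s_m})\cdots(1+\xi\tilde u_{s_1})$ it carries the linearized products back to $g_S$ and $h_S$. So the whole statement reduces to verifying the hypothesis of the Corollary in the $(\xi,\eta)$-world, namely that $e_k(\tilde{\mathbf u}_T)$ and $e_\ell(\tilde{\mathbf v}_T)$ commute in $R[[x,y]]$ for all $k,\ell$ whenever $|T|\le 3$.

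This last point is the crux, and it is where the hypothesis ``$\alpha_{i1}\neq 0$ or $\beta_{i1}\neq 0$'' must be used. The assumption $g_Th_T=h_Tg_T$ yields only the \emph{collapsed} relation $\sum_{k,\ell}x^ky^\ell[e_k(\tilde{\mathbf u}_T),e_\ell(\tilde{\mathbf v}_T)]=0$, whereas the Corollary needs the \emph{uncollapsed} relations $[e_k(\tilde{\mathbf u}_T),e_\ell(\tilde{\mathbf v}_T)]=0$ for each $(k,\ell)$ separately. For $|T|=1$ these two assertions coincide (both say $[\tilde u_i,\tilde v_i]=0$), but for $|T|=2,3$ the $x^ay^b$-graded components of the collapsed identity overlap. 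I would try to separate them by a valuation argument in the $(x,y)$-adic filtration of $R[[x,y]]$: $\tilde u_i$ has $x$-order $0$, with leading term $\alpha_{i1}u_i$, exactly when $\alpha_{i1}\neq 0$, and is otherwise $x$-divisible, and dually for $\tilde v_i$ and $\beta_{i1}$, so the hypothesis forces a nondegenerate leading term at each index in at least one of the two families. Peeling off the term of lowest total $(x,y)$-degree in the collapsed identity should isolate and kill the lowest-degree contribution of each $[e_k(\tilde{\mathbf u}_T),e_\ell(\tilde{\mathbf v}_T)]$; subtracting it and using the singleton- and pair-level relations already established, one would induct on the total degree. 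I expect this uncollapsing for $|T|=2$ and $|T|=3$ to be the genuine obstacle, since the valuation bookkeeping becomes delicate precisely when many of the $\alpha_{i1}$ or $\beta_{i1}$ vanish; a fallback plan, should the reduction to the Corollary resist, is to re-run — for the ``staircase'' coefficients $[g_S]_{x^k}$ in place of the $e_k(\mathbf u_S)$ — the combinatorial (Dehn-diagram) argument that establishes Theorem~\ref{t es commute uv 2}.
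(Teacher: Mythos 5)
The statement you are addressing is Conjecture~\ref{cj pairs commute uv}, which the paper explicitly leaves open: the authors prove only special cases of it (Theorems~\ref{t half group ROT corollary mini} and~\ref{t pairs commute v0}, Corollaries~\ref{c super}, \ref{c half group ROT}, and~\ref{c cj pairs commute uv version3}). So there is no proof in the paper to compare against, and your argument would have to stand entirely on its own. It does not: the ``uncollapsing'' step, which you yourself flag as the crux and defer, is where all of the difficulty of the conjecture lives, and in the form you need it, it is actually \emph{false}.

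Concretely, your reduction to Corollary~\ref{cor:rule-of-3-uv-linear} (applied to $1+\xi\tilde u_i$ and $1+\eta\tilde v_i$ with $\tilde u_i=(g_i-1)/x$, $\tilde v_i=(h_i-1)/y$) requires as input the separated relations $[e_k(\tilde{\mathbf{u}}_T),e_\ell(\tilde{\mathbf{v}}_T)]=0$ for all $k,\ell$ and all $|T|\le 3$; comparing coefficients of $\xi^k\eta^\ell$ shows this is exactly equivalent to the corollary's hypothesis, so nothing weaker will do. Remark~\ref{rem:e1-comm-with-e2} exhibits $N=3$, $g_i=1+x^3u_i^3+x^4u_i^4+x^5u_i^5$, $h_i=1+yv_i$ --- which satisfies the conjecture's hypothesis, since $\beta_{i1}=1\neq 0$ --- such that, modulo the ideal generated by the coefficients of $g_Sh_S-h_Sg_S$ for $|S|\le 3$, the relation $[h_c+h_b+h_a,\,g_cg_b+g_cg_a+g_bg_a]=0$ fails while \eqref{e linear 1}--\eqref{e linear 3b} hold. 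Since $x^2y\,[e_1(\tilde{\mathbf{v}}_S),e_2(\tilde{\mathbf{u}}_S)]=[h_c+h_b+h_a,\,g_cg_b+g_cg_a+g_bg_a]-2\,[h_c+h_b+h_a,\,g_c+g_b+g_a]$ and the last commutator vanishes by Remark~\ref{rem:add-rule-of-two}, the uncollapsed relation $[e_1(\tilde{\mathbf{v}}_S),e_2(\tilde{\mathbf{u}}_S)]=0$ genuinely fails for $|S|=3$ in this example. Hence no leading-term or $(x,y)$-adic valuation argument can recover the input your reduction needs, and your fallback --- rerunning the Dehn-diagram machinery on the coefficients of $g_S$ directly --- is a restatement of the open problem rather than a proof of it.
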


While Conjecture~\ref{cj pairs commute uv} remains open,
we were able to prove it in several important cases.
Three such results appear below and two more appear in Corollaries \ref{c half group ROT}
and \ref{c cj pairs commute uv version3}.

First, we obtain the following generalization of Corollary~\ref{cor:rule-of-3-uv-linear}.

\begin{theorem}
\label{t half group ROT corollary mini}
Conjecture~\ref{cj pairs commute uv} holds for $h_i = 1+yv_i$ (and
any~$g_i$ as in \eqref{eq:g_i=g_i(xu_i)}).
\end{theorem}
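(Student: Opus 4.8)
The plan is to reduce Theorem~\ref{t half group ROT corollary mini} to the already-proven Theorem~\ref{t es commute uv 2}, exactly as Corollary~\ref{cor:rule-of-3-uv-linear} is, but now with the extra flexibility of an arbitrary power series $g_i$ on the $u$-side. The key observation is that, for the specialization $h_i = 1+yv_i$, the factor $h_S = \sum_\ell y^\ell e_\ell(\mathbf{v}_S)$ is still a \emph{polynomial} in $y$ whose coefficients are precisely the noncommutative elementary symmetric functions $e_\ell(\mathbf{v}_S)$, while $g_S = g_{s_m}\cdots g_{s_1}$ is a power series in $x$ whose degree-$d$ coefficient I will call $\phi_d(\mathbf{u}_S)\in R$; since each $g_i$ depends only on $xu_i$, the element $\phi_d(\mathbf{u}_S)$ is a (fixed, $N$-independent) noncommutative polynomial in the $u_j$ for $j\in S$. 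Thus $g_S h_S = h_S g_S$ in $R[[x,y]]$ holds if and only if $\phi_d(\mathbf{u}_S)$ commutes with $e_\ell(\mathbf{v}_S)$ for every $d\ge 0$ and every $\ell\ge 0$. So the statement to prove is: if $\phi_d(\mathbf{u}_S)$ commutes with $e_\ell(\mathbf{v}_S)$ for all $|S|\le 3$ and all $d,\ell$, then the same holds for all $S$.

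The first and main step is to show that each $\phi_d(\mathbf{u}_S)$ lies in the (unital) subring $\E(\mathbf{u}_S)\subset R$ generated by the noncommutative elementary symmetric functions $e_1(\mathbf{u}_S),\dots,e_{|S|}(\mathbf{u}_S)$; more precisely, $\phi_d$ is a $\QQ$-polynomial in the $e_k$'s. This is a purely formal fact about the power series $g(x)=\prod_{i} g_i(x u_i)$ (product over $i\in S$ in decreasing order): since $g_i(t) = 1 + \alpha_{i1} t + \cdots$ is invertible in $\QQ[[t]]$ with $\alpha_{i1}$ (or $\beta_{i1}$, but here we only need invertibility) — actually invertibility in $\QQ[[t]]$ is automatic since the constant term is $1$ — one can write $g_i(xu_i) = \exp(\text{something})$, or more simply argue by induction on $|S|$ using a generating-function manipulation: the coefficients of $\prod_i(1+xu_i)$ are the $e_k(\mathbf{u}_S)$, and replacing each linear factor by an arbitrary power series in the \emph{same} variable $xu_i$ produces coefficients that are $\QQ$-linear combinations of products of the $e_k$'s, because passing from $(1+xu_i)$ to $g_i(xu_i)$ amounts to multiplying the ordered product by a bookkeeping series and re-expanding. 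I would phrase this cleanly as: the map sending $(1+xu_i)$ to $g_i(xu_i)$ factors through the ring $\Lambda_{\mathrm{nc}}[[x]]$ of ``formal noncommutative symmetric series,'' where the $e_k$ generate. The upshot is $\phi_d(\mathbf{u}_S)\in\E(\mathbf{u}_S)$.

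Granting that, the theorem follows quickly. By Theorem~\ref{t es commute uv 2}, the hypothesis for $|S|\le 3$ — which includes in particular that $e_k(\mathbf{u}_S)$ commutes with $e_\ell(\mathbf{v}_S)$ for all $k,\ell$ when $|S|\le 3$, since $e_k(\mathbf{u}_S) = \phi_k(\mathbf{u}_S)$ is among the coefficients when $g_i = 1+xu_i$... but wait: our $g_i$ is \emph{not} $1+xu_i$, so I cannot directly read off $e_k(\mathbf{u}_S)$ as a coefficient of $g_S$. This is the real subtlety, and here is how I would handle it: the hypothesis ``$g_S h_S = h_S g_S$ for $|S|\le 3$'' says every $\phi_d(\mathbf{u}_S)$ commutes with every $e_\ell(\mathbf{v}_S)$ for $|S|\le 3$. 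Because the $\phi_d(\mathbf{u}_S)$, as $d$ ranges over $\ZZ_{\ge 0}$, generate the same subring $\E(\mathbf{u}_S)$ as the $e_k(\mathbf{u}_S)$ — this requires the invertibility hypothesis, i.e.\ $\alpha_{i1}\ne 0$ for the relevant $i$, to guarantee that $\phi_1(\mathbf{u}_S)$ and its higher analogues recover $e_1(\mathbf{u}_S),\dots$ — it follows that $e_k(\mathbf{u}_S)$ commutes with $e_\ell(\mathbf{v}_S)$ for all $k,\ell$ whenever $|S|\le 3$. Now apply Theorem~\ref{t es commute uv 2}: these commutations for $|S|\le 3$ imply $e_k(\mathbf{u}_S)$ commutes with $e_\ell(\mathbf{v}_S)$ for \emph{all} $S$ and all $k,\ell$. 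Hence $\E(\mathbf{u}_S)$ commutes elementwise with each $e_\ell(\mathbf{v}_S)$ for every $S$, so in particular $\phi_d(\mathbf{u}_S) = g_S$-coefficient commutes with $e_\ell(\mathbf{v}_S) = h_S$-coefficient for every $S$, which is exactly $g_S h_S = h_S g_S$ for all $S$.

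The step I expect to be the main obstacle is the one about the subring generated by $\{\phi_d(\mathbf{u}_S)\}_d$: showing it equals $\E(\mathbf{u}_S)$, i.e.\ both the inclusion $\phi_d\in\E$ (a formal symmetric-function identity) and the reverse inclusion $e_k\in\QQ[\phi_1,\phi_2,\dots]$ (which is where the hypothesis ``$\alpha_{i1}$ or $\beta_{i1}$ nonzero'' — here specifically a condition letting us invert — gets used). The forward inclusion should be a clean consequence of the fact that each $g_i$ is a series in the single element $xu_i$, so that $g_S$ is obtained from $\prod_i(1+xu_i)$ by a substitution that is diagonal on the ``variables'' and hence preserves the $e$-subring; I would make this precise using the standard identification of the ordered product $\prod_{i\in S}(1+xu_i)$ with a universal element in a free object. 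The reverse inclusion is essentially triangularity: $\phi_1(\mathbf{u}_S) = (\sum_{i\in S}\alpha_{i1}u_i)$-type expression, and under the genericity hypothesis the leading behavior lets one solve for the $e_k$'s by a Newton-type recursion over $\QQ$. Once both inclusions are in hand, everything else is bookkeeping and an invocation of Theorem~\ref{t es commute uv 2}.
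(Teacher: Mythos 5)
Your reduction to Theorem~\ref{t es commute uv 2} rests on the claim that the $x$-coefficients $\phi_d(\mathbf{u}_S)$ of $g_S$ and the noncommutative elementary symmetric functions $e_k(\mathbf{u}_S)$ generate the same subalgebra. That claim is false in \emph{both} directions, and the failure is not a technicality: it is the breakdown of the fundamental theorem of symmetric functions in the noncommutative setting. For the forward inclusion, take $g_i=1+(xu_i)^2$ --- which is allowed here, since $h_i=1+yv_i$ means $\beta_{i1}=1\neq0$ and hence no condition whatsoever is imposed on $\alpha_{i1}$ --- and $S=\{1,2\}$. The $x^2$-coefficient of $g_S$ is $u_1^2+u_2^2$, while the degree-two component of the subalgebra of $\QQ\langle u_1,u_2\rangle$ generated by $e_1=u_1+u_2$ and $e_2=u_2u_1$ is spanned by $e_1^2=u_1^2+u_1u_2+u_2u_1+u_2^2$ and $u_2u_1$; comparing the coefficients of $u_1^2$ and of $u_1u_2$ shows $u_1^2+u_2^2$ does not lie in this span. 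For the reverse inclusion, take $g_i=1+xu_i+(xu_i)^2$ and $S=\{1,2\}$: the degree-two component of the subalgebra generated by $\phi_1=u_1+u_2$ and $\phi_2=u_1^2+u_2u_1+u_2^2$ is spanned by $\phi_1^2$ and $\phi_2$, and the same coefficient comparison shows $e_2=u_2u_1$ is not in it. (Your plan to recover the $e_k$ by a ``Newton-type recursion'' using the hypothesis ``$\alpha_{i1}$ or $\beta_{i1}$ nonzero'' cannot work, since that hypothesis is vacuous in this theorem and $\alpha_{i1}$ may be $0$ for every $i$.) So neither of your two translation steps --- from the $|S|\le3$ hypotheses to the $e_k$--$e_\ell$ commutations, nor from the $e_k$--$e_\ell$ commutations back to $[g_S,h_S]=0$ --- is justified, and the argument does not go through.

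The information your outline discards is exactly what the paper uses. Since $h_S=\sum_\ell y^\ell e_\ell(\mathbf{v}_S)$ is a polynomial in $y$ whose linear coefficient is $\sum_{i\in S}v_i$, extracting the coefficient of $y^1$ from $[g_S,h_S]=0$ yields $[g_S,\sum_{i\in S}h_i]=0$; thus the $|S|\le3$ hypotheses automatically supply the extra ``product vs.\ sum'' commutations, and Theorem~\ref{t half group ROT} (whose only structural requirement is potential invertibility, automatic for power series with constant term $1$) then gives $[g_S,h_S]=0$ for all $S$; see Corollary~\ref{c half group ROT}. Some such additional input beyond the bare relations $g_Sh_S=h_Sg_S$ for $|S|\le3$ is genuinely necessary: as noted after Corollary~\ref{cor:rule-of-3-uv-linear}, those relations alone do not formally imply the case $|S|=4$.
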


Theorem~\ref{t half group ROT corollary mini} is a special case of a more general result,
see Corollary~\ref{c half group ROT}.

We also settle Conjecture~\ref{cj pairs commute uv}
in the case of a single set of noncommuting variables:

\begin{theorem} 
\label{t pairs commute v0}
Let $R[[x,y]]$ be the ring of formal power series in $x$
and~$y$ with coefficients in a $\QQ$-algebra~$R$.
Let $u_1,\dots,u_N\in R$.
Let $g_1, \ldots, g_N, h_1, \ldots, h_N \in R[[x,y]]$ be of the form
\begin{alignat}{5} \label{eq:gi-uu}
g_i &= 1+\alpha_{i1} xu_i &&+\alpha_{i2} (xu_i)^2 &&+\alpha_{i3}
(xu_i)^3&&+\cdots
\qquad && (\alpha_{ik}\in\QQ), \\
h_i &= 1+\beta_{i1} yu_i &&+\beta_{i2} (yu_i)^2 &&+\beta_{i3}
(yu_i)^3&&+\cdots
\qquad && (\beta_{ik}\in\QQ),\label{eq:hi-uu}
\end{alignat}
where for every~$i$, either $\alpha_{i1}$ or~$\beta_{i1}$ is nonzero.
Then the following are equivalent:
 \begin{itemize}
\item $g_S h_S = h_S g_S$ for all subsets $S\subset\{1,\dots,N\}$;
\item $g_S h_S = h_S g_S$ for all subsets $S$ of cardinality $2$
  and~$3$.
\end{itemize}
\end{theorem}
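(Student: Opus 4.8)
The plan is to prove the nontrivial (``if'') implication; the converse is immediate. I would begin with two routine reductions. First, extracting the coefficient of $x^k y^\ell$ on each side and using that $x$ occurs only in the $g_i$ and $y$ only in the $h_i$, the relation $g_S h_S = h_S g_S$ in $R[[x,y]]$ becomes the family of commutation relations $[P_k(S), Q_\ell(S)] = 0$ ($k,\ell \ge 0$), where $P_k(S)$ (resp.\ $Q_\ell(S)$) is the coefficient of $x^k$ in $g_S$ (resp.\ of $y^\ell$ in $h_S$) --- a polynomial in $\{u_s : s \in S\}$ with rational coefficients depending on the $\alpha_{ik},\beta_{ik}$. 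Since each such relation involves only finitely many of the $\alpha_{ik},\beta_{ik}$, and altering $g_i$ in $x$-degrees $>d$ does not change $g_S$ in $x$-degrees $\le d$, I may assume that every $g_i,h_i$ is a \emph{polynomial}. Second, I would induct on $N$: applying the theorem to each proper sub-tuple $(u_s)_{s\in S'}$ (with the induced $g_s,h_s$, which satisfy the size-$2,3$ hypotheses) gives $g_S h_S = h_S g_S$ for every $S\subsetneq\{1,\dots,N\}$, so it remains, for $N\ge 4$, to establish the single relation $g_{[N]}h_{[N]}=h_{[N]}g_{[N]}$ assuming it holds for all proper subsets; the case $|S|=1$ is automatic since $g_i$ and $h_i$ are both power series in the single element $u_i$.

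For the inductive step I would exploit the single-alphabet structure $\mathbf v=\mathbf u$ as follows. Each $g_i-1$ is divisible by $x$, so $g_i = 1 + x\tilde u_i$ with $\tilde u_i := \alpha_{i1}u_i + \alpha_{i2}xu_i^2 + \cdots \in R[[x]]$; similarly $h_i = 1 + y\tilde v_i$ with $\tilde v_i\in R[[y]]$, and crucially both $\tilde u_i$ and $\tilde v_i$ are power series in the common element $u_i$. Over the ring $\tilde R:=R[[x,y]]$ with \emph{fresh} central variables $\xi,\eta$, set $G_i = 1+\xi\tilde u_i$ and $H_i = 1+\eta\tilde v_i$; then $g_i = G_i|_{\xi=x}$ and $h_i = H_i|_{\eta=y}$. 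By Corollary~\ref{cor:rule-of-3-uv-linear} applied to the tuples $\tilde{\mathbf u},\tilde{\mathbf v}$ in $\tilde R$, one has $G_S H_S = H_S G_S$ for \emph{all} $S$ as soon as this holds for $|S|\le 3$; and (as in the proof of that corollary) the latter is equivalent to the commutation of the ordinary noncommutative elementary symmetric functions $e_k(\tilde{\mathbf u}_S)$ and $e_\ell(\tilde{\mathbf v}_S)$ for all $k,\ell\ge 0$ and $|S|\le 3$. Specializing $\xi\mapsto x$, $\eta\mapsto y$ would then recover the desired relation for $S=[N]$. So the task reduces to propagating the $|S|\le 3$ hypothesis through this specialization, i.e.\ to showing that $g_S h_S = h_S g_S$ for $|S|\le 3$ forces $[e_k(\tilde{\mathbf u}_S),e_\ell(\tilde{\mathbf v}_S)]=0$ for each individual $(k,\ell)$ and $|S|\le 3$.

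I expect this last step to be the main obstacle, for a structural reason: the substitution $\xi\mapsto x$ is lossy, so the hypothesis for a given $S$ only asserts that $\sum_{k,\ell}x^k y^\ell[e_k(\tilde{\mathbf u}_S),e_\ell(\tilde{\mathbf v}_S)]$ vanishes as a single element of $R[[x,y]]$ --- and because $\tilde u_i,\tilde v_i$ themselves carry $x$'s and $y$'s there is a priori cancellation between distinct bidegrees. Disentangling these contributions --- equivalently, decoupling the nonlinear ``tails'' of the $g_i,h_i$ from the elementary-symmetric skeleton --- is precisely where $\mathbf v=\mathbf u$ must be used in an essential way: since every $g_i$ and every $h_i$ is a power series in the common element $u_i$, the nonlinear parts commute with everything built from $u_i$ and can be slid past one another and past the linear parts, and after performing all such local moves one is left with relations governed purely by the linear single-alphabet case, i.e.\ by Corollary~\ref{cor intro ed commute} (equivalently, Theorem~\ref{th:mrot-linear-uu}). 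Making this precise is what I would expect to require the word/Dehn-diagram calculus of the paper --- a framework in which a length-$\le 3$ commutation relation can be transported along a longer word, with the common-alphabet hypothesis guaranteeing that the needed elementary moves exist. A minor secondary point is the hypothesis that for each $i$ either $\alpha_{i1}$ or $\beta_{i1}$ is nonzero; I would use it through the degree-$(1,1)$ consequence of the $|S|=2$ relations, namely $(\alpha_{i1}\beta_{j1}-\alpha_{j1}\beta_{i1})[u_i,u_j]=0$, which either forces enough of the commutators $[u_i,u_j]$ to vanish (reducing to an essentially commutative situation, where both conditions of the theorem hold trivially) or forces the vectors $(\alpha_{i1},\beta_{i1})$ to be pairwise proportional, so that a rescaling $u_i\mapsto\lambda_i u_i$ normalizes the leading coefficients before the induction is run.
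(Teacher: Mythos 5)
Your reduction stalls exactly where you say it does, and the heuristic you offer for getting past that point does not work; this is a genuine gap. After introducing the fresh variables $\xi,\eta$, applying Corollary~\ref{cor:rule-of-3-uv-linear} requires the \emph{individual} bigraded relations $[e_k(\tilde{\mathbf u}_S),e_\ell(\tilde{\mathbf v}_S)]=0$ for $|S|\le 3$, whereas the hypothesis only gives their specialization at $\xi=x,\eta=y$, where (as you note) distinct bidegrees mix because $\tilde u_i,\tilde v_i$ themselves involve $x,y$. The two ideas you propose to bridge this do not hold up. First, the nonlinear tail of $g_i$ is a power series in $u_i$ alone, so it commutes with expressions in $u_i$ but \emph{not} with expressions involving $u_j$ for $j\ne i$; there is no reason it ``can be slid past'' the other factors in $g_S$ or $h_S$. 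Second, the degree-$(1,1)$ relations $(\alpha_{i1}\beta_{j1}-\alpha_{j1}\beta_{i1})[u_i,u_j]=0$ give only a \emph{per-pair} dichotomy (for each $\{i,j\}$, either the determinant or the commutator vanishes), not a global one; neither branch reduces to ``an essentially commutative situation,'' and a rescaling $u_i\mapsto\lambda_i u_i$ changes nothing since the relations are homogeneous in each $u_i$.

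The paper's actual route avoids recovering the bigraded relations altogether. It works in the group $R[[x,y]]^*$ of power series with constant term $1$ and proves a purely group-theoretic Rule of Three (Theorem~\ref{thm:group-theoretic-gi*hi}, via the Dehn-diagram Lemmas~\ref{l pairs commute group only} and~\ref{lem:decagons-commute}) whose only extra hypothesis is the transfer condition~\eqref{eq:z-commutes-with-gihi}: whatever commutes with $h_b^{-1}g_b$ must commute with $g_b$ and $h_b$ separately. That condition is then verified by Lemma~\ref{lem:lagrange-implicit} applied to
\[
f_b \;=\; h_b^{-1}g_b \;=\; 1+(\alpha_{b1}x-\beta_{b1}y)\,u_b+\cdots,
\]
which is a power series in the \emph{single} element $u_b$ (this is where $\mathbf v=\mathbf u$ is used) with nonzero linear coefficient $\alpha_{b1}x-\beta_{b1}y$ (this is where the hypothesis that $\alpha_{b1}$ or $\beta_{b1}$ is nonzero is used, and why two independent variables $x,y$ are essential): anything commuting with $f_b$ then commutes with $u_b$, hence with $g_b$ and $h_b$. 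If you want to salvage your outline, the missing ingredient is some such mechanism for upgrading ``$z$ commutes with a product/quotient of $g_b$ and $h_b$'' to ``$z$ commutes with each factor''; without it, the lossiness of the specialization is fatal.
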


%
%

Yet another case of Conjecture~\ref{cj pairs commute uv} follows from Theorem~\ref{t super}:
\begin{corollary}
\label{c super}
Conjecture~\ref{cj pairs commute uv} holds provided for each~$i$,
one of the following two options is chosen:
\begin{itemize}
\item $g_i = 1+x u_i$ and $h_i = 1+y v_i$; or
\item $g_i=(1-xu_i)^{-1}$ and $h_i=(1-yv_i)^{-1}$.
\end{itemize}
\end{corollary}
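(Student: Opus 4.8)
The plan is to reduce Corollary~\ref{c super} to Theorem~\ref{t super} by recognizing the two permitted choices of $(g_i,h_i)$ as, respectively, the ``unbarred'' and ``barred'' types of factor in the generating function for the noncommutative super elementary symmetric functions.

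First I would translate the data of the corollary into a barred/unbarred partition of the alphabet $\{1<\cdots<N\}$: declare an index $i$ to be \emph{unbarred} when the first option ($g_i=1+xu_i$, $h_i=1+yv_i$) is chosen, and \emph{barred} when the second option ($g_i=(1-xu_i)^{-1}$, $h_i=(1-yv_i)^{-1}$) is chosen. With respect to this partition, the elements $\mybar{e}_k(\mathbf{u}_S)$ and $\mybar{e}_\ell(\mathbf{v}_S)$ of~\eqref{eq:ebar_k(u)} are well defined for every $S\subseteq\{1,\dots,N\}$.

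Next I would establish the key bookkeeping identity
\[
g_S=\sum_{k\ge 0}x^k\,\mybar{e}_k(\mathbf{u}_S),\qquad h_S=\sum_{\ell\ge 0}y^\ell\,\mybar{e}_\ell(\mathbf{v}_S)
\]
in $R[[x,y]]$, valid for every subset $S=\{s_1<\cdots<s_m\}$. This is a direct expansion of the product $g_S=g_{s_m}\cdots g_{s_1}$: an unbarred factor $g_{s_j}=1+xu_{s_j}$ contributes at most one copy of $u_{s_j}$, while a barred factor $g_{s_j}=(1-xu_{s_j})^{-1}=\sum_{p\ge0}(xu_{s_j})^p$ contributes any number of copies, so multiplying the factors out in order of decreasing index produces exactly the weakly decreasing words with no repeated unbarred letter enumerated on the right-hand side of~\eqref{eq:ebar_k(u)}, graded by total $x$-degree. (All power series here have integer coefficients, so no use of $\QQ$ is actually needed; and since the coefficient of $xu_i$ in each $g_i$ equals $1$, the hypothesis in Conjecture~\ref{cj pairs commute uv} that $\alpha_{i1}$ or $\beta_{i1}$ be nonzero is automatically satisfied.) Comparing coefficients of $x^ky^\ell$, the equation $g_Sh_S=h_Sg_S$ in $R[[x,y]]$ is equivalent to the assertion that $\mybar{e}_k(\mathbf{u}_S)$ commutes with $\mybar{e}_\ell(\mathbf{v}_S)$ for all $k,\ell$.

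The corollary then follows at once. If $g_Sh_S=h_Sg_S$ holds for all $S$ with $|S|\le 3$, then by the previous paragraph $\mybar{e}_k(\mathbf{u}_S)$ and $\mybar{e}_\ell(\mathbf{v}_S)$ commute for all $k,\ell$ whenever $|S|\le 3$; in particular the special cases~\eqref{e 1d super}--\eqref{e 1d super2} hold, so Theorem~\ref{t super} yields commutation of $\mybar{e}_k(\mathbf{u}_S)$ and $\mybar{e}_\ell(\mathbf{v}_S)$ for all $k$, $\ell$ and all $S$, and translating back through the identity above gives $g_Sh_S=h_Sg_S$ for every $S$. The reverse implication is trivial. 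I do not expect any genuine obstacle here: the entire content is carried by Theorem~\ref{t super}, and the only thing demanding (mild) care is the generating-function identity, specifically the observation that the barred factors supply the ``repetitions allowed'' contributions while the unbarred factors supply the ``no repetitions'' contributions, matching the side condition ``$i_j$ unbarred $\Rightarrow i_j>i_{j+1}$'' in the definition~\eqref{eq:ebar_k(u)}.
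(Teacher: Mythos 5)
Your proposal is correct and follows exactly the route the paper intends: the barred/unbarred dictionary, the generating-function identity $g_S=\sum_k x^k\,\esup{k}{\mathbf{u}_S}$, $h_S=\sum_\ell y^\ell\,\esup{\ell}{\mathbf{v}_S}$ (which is precisely the identity used in the paper's proof of Theorem~\ref{t super}), and then an appeal to Theorem~\ref{t super} via its special cases \eqref{e 1d super}--\eqref{e 1d super2}. Your side remarks (that $\alpha_{i1}=\beta_{i1}=1$ automatically, and that $\QQ$-coefficients are not actually needed) are accurate.
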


Theorem~\ref{t super} is stronger than Corollary~\ref{c super} since
the latter requires the relations \eqref{eq:e-commute-u-super} for
$|S| \le 3$ and any  $k, \ell$ whereas the former only needs the
instances with $k =1$ or  $\ell=1$.

\begin{remark}
\label{rem:rot-super}
Theorem~\ref{t pairs commute v0} is a far-reaching generalization of
Corollary~\ref{cor intro ed commute}, which already
demonstrated its importance in initiating new developments in the theory of noncommutative Schur functions
 (cf. Remark~\ref{r nc schur}).
It remains to be seen whether Theorem~\ref{t pairs commute v0}
for general power series \eqref{eq:gi-uu}--\eqref{eq:hi-uu}
can spawn new versions of this theory. 

One case (beyond the basic choice
\hbox{\eqref{eq:gi-linear}--\eqref{eq:hi-linear}})
where some progress has been made is the setting of noncommutative
super symmetric functions (cf.\ Theorem \ref{t super}).
The ring defined by the relations \eqref{e u12 23vars}--\eqref{e u13 23vars}
has  many quotients with rich combinatorial
structure (the plactic algebra, nilCoxeter algebra, and more, see \cite{BD0graph, BF, FG, LamRibbon});
the ring defined by the relations \eqref{e 1d super}--\eqref{e 1d super2}
has many interesting quotients as well, some of them similar to the plactic algebra.
The recent paper \cite{BLKronecker}
studies some of these quotients and develops an accompanying theory of
noncommutative super Schur functions.
The main application (recovering results
of~\cite{BHook, Ricky}) is a positive combinatorial rule for the Kronecker coefficients  where one
of the shapes is a hook.
\end{remark}

We next discuss some of the subtleties involved in
generalizing the above results.
To~facilitate this discussion, we introduce the following concept.

\begin{definition}
\label{def:mult-rot}
Let $\mathcal{A}=\QQ\langle u_1, \ldots, u_M, v_1, \ldots, v_M\rangle$
be the free associative $\QQ$-algebra with generators $u_1,\ldots, u_M, v_1,\ldots, v_M$.
Let $g_1, \ldots, g_N, h_1, \ldots, h_N$ be elements of~$\mathcal{A}[[x,y]]$,
i.e., some formal power series in $x$ and~$y$ with coefficients in~$\mathcal{A}$.
We say that the \emph{Multiplicative Rule of Three holds} for 
  $g_1,\ldots,g_N,h_1,\ldots,h_N$ if for any quotient ring
$R=\mathcal{A}/I$, the following are equivalent:
\begin{itemize}
\item $g_S h_S \equiv h_S g_S \bmod I$\ \ for all subsets $S\subset\{1,\dots,N\}$;
\item $g_S h_S \equiv h_S g_S \bmod I$\ \ for all subsets $S$ of cardinality $\le 3$.
\end{itemize}
\end{definition}

For example, the Multiplicative Rule of Three holds in the following cases:
\begin{itemize}
\item
$h_i = 1+yv_i$ and
any~$g_i$ as in \eqref{eq:g_i=g_i(xu_i)} (by Theorem~\ref{t half group ROT corollary mini});
\item
$g_i, h_i$ are given by \eqref{eq:gi-uu}--\eqref{eq:hi-uu}, with $\alpha_{i1}, \beta_{i1}$ not both 0
(by Theorem~\ref{t pairs commute v0}).
\end{itemize}
Conjecture \ref{cj pairs commute uv} asserts that the
Multiplicative Rule of Three holds for
$g_i, h_i$ given by \eqref{eq:g_i=g_i(xu_i)}--\eqref{eq:h_i=h_i(yv_i)},
with  $\alpha_{i1}, \beta_{i1}$ not both 0.

\begin{problem}
\label{problem:rot:A[[x,y]]}
Find the most general setting
(i.e., the weakest restrictions on the expressions $g_i, h_i$)
for which the Multiplicative Rule of Three~holds.
\end{problem}

\begin{example}
The Multiplicative Rule of Three fails for
\begin{alignat}{2}
&g_4 = (1+x u_8)(1+x u_7), \qquad && h_4 = (1+y u_8)(1+y u_7), \label{eq:paired-factors1} \\
&g_3 = (1+x u_6)(1+x u_5), && h_3 = (1+y u_6)(1+y u_5), \label{eq:paired-factors2} \\
&g_2 = (1+x u_4)(1+x u_3), && h_2 = (1+y u_4)(1+y u_3), \label{eq:paired-factors3} \\
&g_1 = (1+x u_2)(1+xu_1),  && h_1 = (1+y u_2)(1+y u_1). \label{eq:paired-factors4}
\end{alignat}
In other words, the relations on $8$~elements $u_1,\dots,u_8$
of a ring~$R$ resulting from the conditions
$g_S h_S = h_S g_S$ for all subsets $S$ of cardinality~$\le 3$,
with the $g_i$ and the~$h_i$ given by \eqref{eq:paired-factors1}--\eqref{eq:paired-factors4},
do not imply the relation $g_S h_S = h_S g_S$ for $S=\{1,2,3,4\}$.
This was shown using a noncommutative Gr\"obner basis calculation in \texttt{Magma}~\cite{Magma}.
\end{example}


\begin{example}
\label{ex:256patterns}
For an $8$-letter word $\mathbf{z}=z_1z_2z_3z_4\,z'_1z'_2z'_3z'_4$ in the alphabet~$\{x,y\}$,
let 
\[
g_i = 1+z_i u_i, \ \ h_i = 1+z'_iv_i. 
\]
In this setting, the Multiplicative Rule of Three (with $N=4$)
holds for 222 of the $2^8=256$ choices of~$\mathbf{z}$,
and fails for the remaining~34.
Specifically, the rule holds unless
\begin{itemize}
\item
$z_1=z_1'$, \ $z_2=z_2'$, \ $z_3=z_3'$, \ $z_4=z_4'$, \ or
\item
$z_1<z_1'$, \ $z_2\ge z_2'$, \ $z_3\ge z_3'$, \ $z_4<z_4'$, \ or
\item
$z_1>z_1'$, \ $z_2\le z_2'$, \ $z_3\le z_3'$, \ $z_4>z_4'$,
\end{itemize}
where we use the order relation $x<y$ on the symbols $x$ and~$y$.
This was shown using a noncommutative Gr\"obner basis calculation in \texttt{Magma}~\cite{Magma}.
\end{example}


The Multiplicative Rule of Three always
holds in the simplified version of Example~\ref{ex:256patterns} wherein the
factors  $g_i, h_i$ depend on a single set of noncommuting variables:


\begin{theorem}
\label{th:mrot-linear-uu}
The Multiplicative Rule of Three
holds
when $g_i = 1+(\alpha_i x + \alpha_i'y) u_i$ and $h_i = 1+(\beta_{i}x+\beta_i' y)u_i$
for any  $\alpha_i, \alpha_i', \beta_i, \beta_i' \in \QQ$.
(Here we use the notational conventions of Definition~\ref{def:mult-rot}.)
\end{theorem}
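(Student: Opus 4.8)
The plan is to peel the general linear ansatz down, in three reductions, to the Rule of Three for ordinary noncommutative elementary symmetric functions, i.e.\ to Corollary~\ref{cor intro ed commute}.

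\emph{Step 1: setup and locality.} Write $\ell_i=\alpha_i x+\alpha_i' y$ and $m_i=\beta_i x+\beta_i' y$, so $g_i=1+\ell_i u_i$ and $h_i=1+m_i u_i$ in $R[x,y]$. Since the $\ell_i,m_i$ are central, $g_S=\sum_{T\subseteq S}\bigl(\prod_{i\in T}\ell_i\bigr)u_{[T]}$, where $u_{[T]}$ is the decreasing product of the $u_i$ with $i\in T$, and likewise for $h_S$. Because $x,y$ commute with everything and $I\subseteq\mathcal A$ does not involve them, $R[x,y]$ is graded by $(x,y)$-degree and the congruence $g_Sh_S\equiv h_Sg_S\bmod I$ amounts to the vanishing in $R$ of each homogeneous component, hence of the coefficient of each monomial $x^ay^b$, of $g_Sh_S-h_Sg_S$; these coefficients involve only the $u_i$ with $i\in S$. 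So it suffices, for a fixed $S$, to deduce $g_Sh_S\equiv h_Sg_S$ from the corresponding congruences for subsets of $S$ of size $\le3$. We also discard every index with $\alpha_i=\alpha_i'=\beta_i=\beta_i'=0$, as then $g_i=h_i=1$.

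\emph{Step 2: splitting by slope.} For $i\ne j$, the $(x,y)$-degree-$2$ component of $g_{\{i,j\}}h_{\{i,j\}}-h_{\{i,j\}}g_{\{i,j\}}$ equals $c_{ij}\,(u_iu_j-u_ju_i)$ with $c_{ij}=\ell_im_j-m_i\ell_j\in\QQ[x,y]$; hence whenever $c_{ij}\ne0$ the $|S|=2$ congruence forces $u_iu_j=u_ju_i$ in $R$. Since $\QQ[x,y]$ is a domain, for nontrivial indices $c_{ij}=0$ precisely when $i$ and $j$ have the same \emph{slope} $[\ell_i:m_i]\in\PP^1(\QQ)$ (interpreting $[\ell_i:m_i]=[0:1]$ when $\ell_i=0$), which is an equivalence relation. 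As generators from distinct slope classes commute, so do the corresponding factors $g_i,h_i$, the elements $g_S$ and $h_S$ factor through the slope classes, and $g_Sh_S=h_Sg_S$ follows once the same identity holds inside each class $S\cap C$. This reduces us to a single class.

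\emph{Step 3: a single class.} Fix a class $C$. If its slope is $[0:1]$, then $g_{S\cap C}=1$ and there is nothing to prove. Otherwise every $\ell_i$ ($i\in C$) is a nonzero linear form and there is a common $\kappa\in\QQ$ with $m_i=\kappa\ell_i$. Put $v_i:=\ell_iu_i\in R[x,y]$; then for $S\subseteq C$ we have $g_S=\sum_k e_k(\mathbf v_S)$ and $h_S=\sum_k\kappa^k e_k(\mathbf v_S)$, where $e_k(\mathbf v_S)$ is homogeneous of $(x,y)$-degree $k$, so that
\[
g_Sh_S-h_Sg_S=\sum_{k<\ell}(\kappa^\ell-\kappa^k)\,[\,e_k(\mathbf v_S),\,e_\ell(\mathbf v_S)\,]
\]
is a sum of terms of distinct $(x,y)$-degrees $k+\ell$. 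If $\kappa\in\{0,1\}$ then $h_S=1$ or $h_S=g_S$. If $\kappa\notin\{0,1,-1\}$, then reading off the $(x,y)$-degree-$3$ and degree-$4$ parts of the $|S|=2,3$ congruences and dividing by the nonzero rationals $\kappa^2-\kappa$, $\kappa^3-\kappa$ yields $[e_1(\mathbf v_S),e_2(\mathbf v_S)]=0$ for $|S|\le3$ and $[e_1(\mathbf v_S),e_3(\mathbf v_S)]=0$ for $|S|=3$; applying Corollary~\ref{cor intro ed commute} in the ring $R[x,y]$ to the elements $v_i$ ($i\in C$) gives $[e_k(\mathbf v_S),e_\ell(\mathbf v_S)]=0$ for all $k,\ell$ and all $S\subseteq C$, whence $g_Sh_S=h_Sg_S$.

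\emph{Main obstacle.} The remaining case is $\kappa=-1$, i.e.\ $g_i=1+\ell_iu_i$, $h_i=1-\ell_iu_i$. Here the coefficient $\kappa^\ell-\kappa^k=(-1)^\ell-(-1)^k$ of $[e_k(\mathbf v_S),e_\ell(\mathbf v_S)]$ in $g_Sh_S-h_Sg_S$ vanishes whenever $k\equiv\ell\pmod2$; in particular the $(x,y)$-degree-$4$ part (coming from $(k,\ell)=(1,3)$) is identically zero, so the $|S|\le3$ congruences only deliver $[e_1(\mathbf v_S),e_2(\mathbf v_S)]=0$ ($|S|\le3$) and $[e_2(\mathbf v_S),e_3(\mathbf v_S)]=0$ ($|S|=3$), and Corollary~\ref{cor intro ed commute} does not apply verbatim. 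Equivalently, writing $g_S=g_S^++g_S^-$ for its even and odd $(x,y)$-degree parts (so $h_S=g_S^+-g_S^-$), one must show $[g_S^+,g_S^-]=0$ for all $S\subseteq C$, knowing it for $|S|\le3$. I expect this to be the crux, to be settled by a finite computation — either by showing that $[e_1(\mathbf v_S),e_2(\mathbf v_S)]=0$ together with $[e_2(\mathbf v_S),e_3(\mathbf v_S)]=0$ already force $[e_1(\mathbf v_S),e_3(\mathbf v_S)]=0$ (which reduces again to Corollary~\ref{cor intro ed commute}), or by a direct induction on $|C|$ peeling off the top factor $1\mp\ell_{\max C}u_{\max C}$. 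This is presumably where the work shared with the proof of Theorem~\ref{t pairs commute v0} in Section~\ref{sec:proof:u=v} enters. Once it is in place, Steps~1--3 assemble into the theorem.
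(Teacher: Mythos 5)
Your reduction is incomplete, and the place where it stops is exactly the hard part of the theorem. Two issues. First, a misstatement in Step~3: within a slope class the common ratio $m_i/\ell_i$ need not lie in $\QQ$. The correct dichotomy is that either $m_i=\kappa\ell_i$ for a common $\kappa\in\QQ\cup\{\infty\}$ with the $\ell_i$ arbitrary nonzero linear forms, or else $(\ell_i,m_i)=c_i(\ell,m)$ for a fixed pair of \emph{linearly independent} forms $\ell,m$ and scalars $c_i\in\QQ^\times$ (e.g.\ $g_i=1+xu_i$, $h_i=1+yu_i$). The second branch is missing from your argument; it happens to be easy (after a linear change of variables the monomials $x^ky^\ell$ separate all commutators $[e_k,e_\ell]$, and Corollary~\ref{cor intro ed commute} applies), but it must be stated. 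Second, and more seriously, the case $\kappa=-1$ is not a finite computation you can defer: the degree-$4$ component of $[g_S,h_S]$ vanishes identically, so for $|S|=3$ your hypotheses yield only $[e_1(\mathbf{v}_S),e_2(\mathbf{v}_S)]=0$ and $[e_2(\mathbf{v}_S),e_3(\mathbf{v}_S)]=0$, and there is no reason to expect these to force $[e_1(\mathbf{v}_S),e_3(\mathbf{v}_S)]=0$ in the free algebra; indeed Remarks~\ref{rem invertibility needed} and~\ref{rem:e1-comm-with-e2} warn that implications of this flavor tend to fail without using invertibility. Moreover, even the target for $|S|\ge 4$ is not that all $[e_k,e_\ell]$ vanish but only that certain signed combinations in each degree do, so a reduction to Corollary~\ref{cor intro ed commute} is not the right shape for this case.

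The paper's proof is a one-line application of Theorem~\ref{t pairs commute v0 2}: with $\alpha_{i1}=\alpha_ix+\alpha_i'y$ and $\beta_{i1}=\beta_ix+\beta_i'y$, one has either $g_i=h_i$ or $\alpha_{i1}\ne\beta_{i1}$, which is precisely the dichotomy that theorem requires. The mechanism behind it --- Corollary~\ref{cor:group-theoretic-gi*hi} (a Dehn-diagram argument in the group $R[[x,y]]^*$) combined with Lemma~\ref{lem:lagrange-implicit} (anything commuting with $h_b^{-1}g_b=1+(\alpha_{b1}-\beta_{b1})u_b+\cdots$ commutes with $u_b$) --- is exactly what handles your $\kappa=-1$ case uniformly with all the others, using invertibility rather than degree separation. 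Your slope-splitting in Step~2 is correct and your treatment of $\kappa\notin\{0,1,-1,\infty\}$ goes through, so the approach is genuinely different where it works; but as it stands the crux case is open, and the natural way to close it is to invoke Theorem~\ref{t pairs commute v0 2}, at which point the whole case analysis becomes unnecessary.
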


Theorem~\ref{th:mrot-linear-uu} is a special case of a more general result,
see Theorem~\ref{t pairs commute v0 2}.

Since the Multiplicative Rule of Three does not always hold, 
it is natural to consider Rules of Four and beyond 
(though this has not been the main focus of our investigation).  
For example, we can generalize Definition \ref{def:mult-rot} as follows: 
for any  $k \ge 0$, we say that the \emph{Multiplicative Rule of k holds} for 
$g_1,\ldots,g_N,h_1,\ldots,h_N$ if for any quotient ring
$R=\mathcal{A}/I$, the following are equivalent:
\begin{itemize}
\item $g_S h_S \equiv h_S g_S \bmod I$\ \ for all subsets $S\subset\{1,\dots,N\}$;
\item $g_S h_S \equiv h_S g_S \bmod I$\ \ for all subsets $S$ of cardinality $\le k$.
\end{itemize}
\begin{conjecture}
\label{cj ro4}
For any  $k \ge 0$ and $N > k$, the Multiplicative Rule of $k$ fails for  $g_i = 1+xu_i$,  $h_i = 1+x v_i$,  $i = 1, 2, \dots, N$.  
\end{conjecture}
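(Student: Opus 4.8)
The plan is to exhibit, for each $k \ge 0$ and $N > k$, an explicit quotient $R = \mathcal{A}/I$ in which all the relations $g_S h_S = h_S g_S$ hold for $|S| \le k$ but the relation fails for some $S$ of size $k+1$. It suffices to treat $N = k+1$ and $S = \{1,\dots,k+1\}$, since enlarging $N$ only adds more relations among disjoint blocks of variables and one can set the extra variables to zero. With $g_i = 1 + x u_i$ and $h_i = 1 + x v_i$, we have $g_S = \sum_j x^j e_j(\mathbf{u}_S)$ and $h_S = \sum_j x^j e_j(\mathbf{v}_S)$, so (since $x$ is central) the condition $g_S h_S = h_S g_S$ is equivalent to the system $\sum_{a+b=j} [e_a(\mathbf{u}_S), e_b(\mathbf{v}_S)] = 0$ for all $j$. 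The key point is that this is precisely the ``bad'' variant of Corollary~\ref{cor:rule-of-3-uv-linear} in which $h_i$ uses $x$ rather than $y$ — the variant the introduction already flags as false via Example~\ref{ex:256patterns} — so one expects these relations to be genuinely weaker than the full commutation.

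The concrete construction I would use: let $R$ be the commutative polynomial ring $\QQ[t]$ modulo a suitable ideal, or more robustly a path-algebra/incidence-algebra quotient, chosen so that $e_j(\mathbf{u}_S)$ and $e_j(\mathbf{v}_S)$ are specified elements. A cleaner route is to work in the free algebra $\mathcal{A}$ on $u_1,\dots,u_{k+1},v_1,\dots,v_{k+1}$ and define $I$ to be the two-sided ideal generated by all the coefficients $\sum_{a+b=j}[e_a(\mathbf{u}_T),e_b(\mathbf{v}_T)]$ over all proper subsets $T \subsetneq \{1,\dots,k+1\}$ and all $j$. By construction the Rule-of-$k$ hypothesis holds in $R = \mathcal{A}/I$; the content of the conjecture is that the single element $\sum_{a+b = k+1}[e_a(\mathbf{u}_S), e_b(\mathbf{v}_S)]$ (with $S$ the full set) is nonzero in $R$. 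To prove nonvanishing I would construct an explicit representation $\rho\colon \mathcal{A} \to \mathrm{End}(V)$ for a finite-dimensional $\QQ$-vector space $V$, killing all the subset relations but not the top one — essentially reverse-engineering the Magma counterexample of Example~\ref{ex:256patterns} ($k=3$) into a uniform family. A natural candidate has $\rho(u_i)$ and $\rho(v_i)$ acting on a tensor-product or direct-sum space indexed by subsets of $\{1,\dots,k+1\}$, with nilpotent action designed so that words of length $> k+1$ vanish; then the only surviving obstruction lives in degree $k+1$.

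The main obstacle is the uniformity: producing a single closed-form representation that works simultaneously for all $k$, rather than a case-by-case check. The introduction notes that even $k=3$ was settled by a Gröbner-basis computation, so the difficulty is not the existence of a counterexample for any fixed $k$ but finding a transparent combinatorial model — perhaps a quiver with $k+2$ vertices, or the algebra of $(k+2)\times(k+2)$ matrices with a prescribed sparsity pattern — for which one can hand-verify both that the proper-subset obstructions vanish and that the full-set obstruction does not. I would expect the verification that \emph{all} $\binom{k+1}{j}$-indexed subset relations vanish to reduce, by a symmetry or ``telescoping'' argument on the combinatorics of elementary symmetric functions, to a single identity; isolating that identity and checking the residual nonvanishing of the top term is where the real work lies. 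If a uniform model proves elusive, a fallback is an inductive argument: from a counterexample for $k$ on $k+1$ variables, build one for $k+1$ on $k+2$ variables by a suitable ``coning'' construction (adjoining a new pair $u_{k+2},v_{k+2}$ acting so as to shift degrees up by one), though making the induction preserve all the lower-order relations is itself delicate.
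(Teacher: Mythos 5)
The statement you are addressing is labeled a \emph{conjecture} in the paper, and the authors do not prove it: they report only that it was verified for $k\le 5$ by a noncommutative Gr\"obner basis computation. Your submission is likewise not a proof but a strategy, and the decisive step is missing. Concretely: after the (correct) reduction to $N=k+1$, $S=\{1,\dots,k+1\}$, and the (correct) observation that one should work in the universal quotient $R=\mathcal{A}/I$ with $I$ generated by the coefficients of $[g_T,h_T]$ over all $T$ with $|T|\le k$, the entire content of the conjecture is the assertion that some coefficient of $[g_S,h_S]$ does not lie in~$I$. You propose to establish this by constructing a finite-dimensional representation $\rho$ of $\mathcal{A}$ killing $I$ but not that element, and you explicitly acknowledge that you do not have such a representation for general $k$ --- neither a closed-form family nor an inductive ``coning'' construction whose preservation of the lower-order relations you can verify. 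Since no candidate $\rho$ is written down and no non-membership argument is given, nothing beyond the already-known computational cases is actually established.

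A secondary caution about the proposed inductive fallback: passing from a counterexample for $k$ to one for $k+1$ is not merely ``delicate'' but is where a genuine new idea is needed, because adjoining a new pair $u_{k+2},v_{k+2}$ imposes an entire new layer of relations (all subsets of size $\le k+1$ containing the new index), and there is no a priori reason these are compatible with the representation you started from. Your observation that the setting $h_i=1+xv_i$ is one of the ``failing patterns'' of Example~\ref{ex:256patterns} correctly explains why the conjecture is plausible for $k=3$, but it gives no leverage for general $k$. As it stands, your proposal restates the problem in the standard form (non-membership in a finitely generated two-sided ideal of a free algebra) without resolving it.
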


Conjecture~\ref{cj ro4} would imply the failure of
the ``group-theoretic Rule of~$k$,'' for any~$k$: 

\begin{conjecture}
\label{cj ro4 group}
Fix integers~$k\ge0$ and $N > k$, and 
consider the group whose presentation is given by generators $g_1, \ldots, g_N,
h_1, \ldots, h_N$ and relations $g_Sh_S = h_Sg_S$ for  $|S| \le k$.
Then $g_Sh_S \neq h_Sg_S$ for any $|S| > k$. 
\end{conjecture}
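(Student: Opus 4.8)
The plan is to derive Conjecture~\ref{cj ro4 group} from Conjecture~\ref{cj ro4}; in fact only the instance $N=k+1$ of the latter will be used. Write $G_{k,N}$ for the group with the presentation in the statement of Conjecture~\ref{cj ro4 group}, and suppose toward a contradiction that $g_{S_0}h_{S_0}=h_{S_0}g_{S_0}$ in $G_{k,N}$ for some subset $S_0\subseteq\{1,\dots,N\}$ with $|S_0|>k$.

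The first step is to push this relation to the group $G_{k,k+1}$. Fix a subset $S'\subseteq S_0$ with $|S'|=k+1$ and let $\sigma\colon S'\to\{1,\dots,k+1\}$ be the order-preserving bijection. Sending $g_i\mapsto g_{\sigma(i)}$, $h_i\mapsto h_{\sigma(i)}$ for $i\in S'$ and $g_i\mapsto 1$, $h_i\mapsto 1$ for $i\notin S'$ defines a homomorphism $\psi\colon G_{k,N}\to G_{k,k+1}$: each defining relation $g_Sh_S=h_Sg_S$ with $|S|\le k$ is carried to $g_{\sigma(S\cap S')}h_{\sigma(S\cap S')}=h_{\sigma(S\cap S')}g_{\sigma(S\cap S')}$, and this is a defining relation of $G_{k,k+1}$ because $|S\cap S'|\le|S|\le k$. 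Since $S'\subseteq S_0$, we get $\psi(g_{S_0})=g_{\{1,\dots,k+1\}}$ and $\psi(h_{S_0})=h_{\{1,\dots,k+1\}}$, so the assumed relation forces
\[
g_{\{1,\dots,k+1\}}\,h_{\{1,\dots,k+1\}}=h_{\{1,\dots,k+1\}}\,g_{\{1,\dots,k+1\}}\qquad\text{in }G_{k,k+1}.
\]

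The second step contradicts this using Conjecture~\ref{cj ro4} with $N=k+1$. Since $\{1,\dots,k+1\}$ is the only subset of $\{1,\dots,k+1\}$ of cardinality greater than~$k$, the failure of the Multiplicative Rule of~$k$ for $g_i=1+xu_i$, $h_i=1+xv_i$ ($1\le i\le k+1$) supplies an ideal $I$ of $\mathcal{A}=\QQ\langle u_1,\dots,u_{k+1},v_1,\dots,v_{k+1}\rangle$ with $g_Sh_S\equiv h_Sg_S\bmod I$ for all $|S|\le k$ but $g_{\{1,\dots,k+1\}}h_{\{1,\dots,k+1\}}\not\equiv h_{\{1,\dots,k+1\}}g_{\{1,\dots,k+1\}}\bmod I$. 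Put $R=\mathcal{A}/I$. The images of $1+xu_i$ and $1+xv_i$ in $R[[x]]$ are units, and, using the expansions $g_S=\sum_j x^j e_j(\mathbf{u}_S)$ and $h_S=\sum_j x^j e_j(\mathbf{v}_S)$ and comparing coefficients of powers of~$x$, one sees that over $R$ these units satisfy $g_Sh_S=h_Sg_S$ for $|S|\le k$ but not for $S=\{1,\dots,k+1\}$. Hence there is a group homomorphism $G_{k,k+1}\to R[[x]]^{\times}$ with $g_i\mapsto 1+xu_i$ and $h_i\mapsto 1+xv_i$ (all taken modulo $I$), under which the two sides of the displayed equality have distinct images. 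This is the desired contradiction, so no such $S_0$ exists; that is, $g_Sh_S\neq h_Sg_S$ in $G_{k,N}$ for every $|S|>k$.

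I do not anticipate any genuine obstacle in carrying this out: granting Conjecture~\ref{cj ro4}, the argument is a short diagram chase. The only points needing care are the verification that ``forgetting the generators outside $S'$'' respects the defining relations---which works precisely because intersecting a set with $S'$ cannot increase its cardinality---and the routine translation, via $g_S=\sum_j x^j e_j(\mathbf{u}_S)$ and $h_S=\sum_j x^j e_j(\mathbf{v}_S)$, between congruences modulo~$I$ and commutation of the corresponding units of $R[[x]]$. All of the real difficulty is of course absorbed into Conjecture~\ref{cj ro4}, which remains open.
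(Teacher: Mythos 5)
The statement you are proving is labeled a \emph{conjecture} in the paper, and the paper offers no proof of it: it only asserts, without detail, that Conjecture~\ref{cj ro4} implies Conjecture~\ref{cj ro4 group}, and reports a computational verification for $k\le 5$. What you have written is a correct and complete proof of exactly that implication, not an unconditional proof of the statement. Both of your steps are sound. The retraction $\psi\colon G_{k,N}\to G_{k,k+1}$ is well defined because each defining relator $g_Sh_Sg_S^{-1}h_S^{-1}$ with $|S|\le k$ is sent to the relator indexed by $\sigma(S\cap S')$, whose cardinality is still at most $k$ (or to the trivial word), and $\psi(g_{S_0})=g_{\{1,\dots,k+1\}}$ since $\sigma$ is order-preserving. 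The second step is also right: a failure of the Multiplicative Rule of $k$ for $N=k+1$ must occur at $S=\{1,\dots,k+1\}$, the images of $1+xu_i$ and $1+xv_i$ in $R[[x]]$ are units, and the resulting homomorphism $G_{k,k+1}\to R[[x]]^{\times}$ separates the two sides of the alleged relation. (The appeal to the expansions $g_S=\sum_j x^je_j(\mathbf{u}_S)$ is harmless but unnecessary; one only needs that congruence modulo $I$ coefficientwise is the same as equality in $R[[x]]$.)

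The genuine gap, which you acknowledge yourself, is that all of the content is deferred to Conjecture~\ref{cj ro4}, which is open: no ring $R=\mathcal{A}/I$ witnessing the failure of the Multiplicative Rule of $k$ is known for general $k$ (the paper has only Gr\"obner-basis verifications for $k\le 5$). So your argument establishes ``Conjecture~\ref{cj ro4} $\Rightarrow$ Conjecture~\ref{cj ro4 group}''---which is precisely the claim the authors state without proof, and it is worth having written out---but it does not prove Conjecture~\ref{cj ro4 group}. In particular, for $k\le 5$ your reduction does yield an actual proof, since the hypothesis has been verified there.
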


We verified Conjecture~\ref{cj ro4} (hence Conjecture~\ref{cj ro4 group}) 
in the cases  $k \le 5$ via a noncommutative Gr\"obner basis calculation.

\section{Rules of Three for sums and products}
\label{sec:rot-invertible}

\begin{definition}
Let $M$ be a monoid.
A~subset $M'\!\subset\! M$ is called \emph{potentially invertible}
if $M$ can be embedded into a larger monoid in which all elements of~$M'$
have left and right inverses (necessarily equal to each other);
see \cite[Section~VII.3]{cohn-universal}.
Similarly, a subset~$R'$ of a ring~$R$ is potentially invertible
if $R$ can be embedded into a larger ring in which all elements of $R'$ are invertible.

By abuse of terminology, we say
that elements $g_1,\dots,g_N$
of a monoid (or ring) are potentially
invertible if $\{g_1,\dots,g_N\}$ is a potentially invertible subset.
\end{definition}

Let $R[x]$ be the ring of polynomials in one formal (central) variable~$x$,
with coefficients in a ring~$R$.
Then any subset of~$R[x]$ consisting of polynomials with constant term~$1$ is potentially invertible.
Indeed, $R[x]$~can be embedded into the ring $R[[x]]$ of formal power series over~$R$,
and each polynomial with constant term~$1$ is invertible in~$R[[x]]$.


Throughout this paper, we use the notation $[g,h]=gh-hg$
for the commutator of elements $g,h$
of an associative ring.

\begin{theorem}[The Rule of Three for sums \emph{vs.}\ products]
\label{t half ROT}
Let $R$ be a ring, and let
\hbox{$v_1, \ldots, v_N \!\in\! R$} and $g_1, \ldots, g_N\!\in \!R$,
with $g_1,\dots,g_N$ potentially invertible.
Then the following are equivalent:
\begin{itemize}
\item
$\big[\sum_{i \in S}v_i, g_S\big] = 0$ for all subsets
  $S\subset\{1,\dots,N\}$;
\item
$\big[\sum_{i \in S}v_i, g_S\big] = 0$ for all subsets
$S$ of cardinality~$\le 3$.
\end{itemize}
\end{theorem}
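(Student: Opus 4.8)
The plan is to reduce to the case of invertible generators, reformulate the commutation condition in terms of conjugation, and then run a pair of nested inductions. The implication from full commutation to the $|S|\le 3$ cases is trivial, so I focus on the converse. \textbf{First}, since $g_1,\dots,g_N$ are potentially invertible I embed $R$ into a ring $R'$ in which every $g_i$ is invertible; every relation in play is a commutator identity among elements of $R\subseteq R'$, so it suffices to prove the converse assuming each $g_i$ is already invertible in $R$. Let $c_i\colon r\mapsto g_i r g_i^{-1}$ be the inner automorphism attached to $g_i$, and for $S=\{s_1<\cdots<s_m\}$ set $c_S=c_{s_m}\circ\cdots\circ c_{s_1}$, so that $c_S(r)=g_S r g_S^{-1}$. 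In this language $\bigl[\sum_{i\in S}v_i,\,g_S\bigr]=0$ is equivalent to $\sum_{i\in S}\bigl(c_S(v_i)-v_i\bigr)=0$, while the hypothesis $[v_i,g_i]=0$ says exactly $c_i(v_i)=v_i$.

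\textbf{Next}, I induct on $|S|$. Fix $S$ with $|S|\ge 2$, put $b=\max S$ and $S^-=S\setminus\{b\}$, and write $c_S=c_b\circ c_{S^-}$. Using $c_b(v_b)=v_b$ together with the inductive hypothesis $\sum_{i\in S^-}\bigl(c_{S^-}(v_i)-v_i\bigr)=0$, an elementary rearrangement (factor out $c_b$, apply $c_b^{-1}$) converts $\sum_{i\in S}\bigl(c_S(v_i)-v_i\bigr)=0$ into the single identity
\[
c_{T}(v_b)-v_b \;=\; c_b^{-1}\Bigl(\sum_{i\in T}v_i\Bigr)-\sum_{i\in T}v_i
\qquad (\star_{T,b})
\]
with $T=S^-$; the step is reversible, so the theorem follows once $(\star_{T,b})$ is proved for every index $b$ and every $T\subseteq\{1,\dots,b-1\}$. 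The small cases of $(\star)$ are the $|S|\le 3$ hypotheses in disguise: for $T=\{i\}$, the identity $(\star_{\{i\},b})$ is precisely the relation $[v_i+v_b,\,g_bg_i]=0$ rewritten via $c_i(v_i)=v_i$ and $c_b(v_b)=v_b$; and for $|T|=2$, say $T=\{i,j\}$, applying the rearrangement above in reverse to $S=\{i,j,b\}$ shows $(\star_{\{i,j\},b})$ is equivalent to $[v_i+v_j+v_b,\,g_bg_jg_i]=0$ given the two- and one-element relations. It is important that this $|T|=2$ case be extracted \emph{directly} from the three-element hypothesis, before the auxiliary lemma below is used.

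\textbf{The inductive step for $(\star)$} uses a fixed-point lemma. For $a<b$ set $\delta(a,b)=v_a-c_b^{-1}(v_a)$; by the $|T|=1$ case already established, this also equals $v_b-c_a(v_b)$. Peeling off instead the \emph{smallest} index $a=\min T$ in $(\star_{T,b})$ (with $T'=T\setminus\{a\}$, $c_T=c_{T'}\circ c_a$) and invoking $(\star_{T',b})$, one checks that $(\star_{T,b})$ is equivalent to $c_{T'}\bigl(\delta(a,b)\bigr)=\delta(a,b)$. Taking $|T|=2$ in this equivalence yields $c_{\{c\}}\bigl(\delta(a,b)\bigr)=\delta(a,b)$ for every single index $c$ with $a<c<b$; and then, for an arbitrary $X\subseteq\{a+1,\dots,b-1\}$, induction on $|X|$ gives $c_X\bigl(\delta(a,b)\bigr)=\delta(a,b)$, since with $c=\min X$, $X'=X\setminus\{c\}$ we have $c_X\bigl(\delta(a,b)\bigr)=c_{X'}\bigl(c_c(\delta(a,b))\bigr)=c_{X'}\bigl(\delta(a,b)\bigr)=\delta(a,b)$. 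Feeding this back: $(\star_{T,b})$ holds for $|T|\le 2$, and for $|T|\ge 3$ it is the conjunction of $(\star_{T',b})$ (the inductive hypothesis, $T'=T\setminus\{\min T\}$) and $c_{T'}\bigl(\delta(\min T,b)\bigr)=\delta(\min T,b)$ (the lemma); hence $(\star_{T,b})$ holds for all $T,b$, and with it the theorem.

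\textbf{The main obstacle} is not computational — the two peeling rearrangements are routine bilinear manipulations once written down — but structural: one has to guess the right intermediate family of identities $(\star_{T,b})$ and the right auxiliary invariant $\delta(a,b)$, and then arrange the three layers of induction (on $|S|$, within it on $|T|$, and within that on $|X|$) so that no step is circular. The delicate point, flagged above, is that the lemma's base case consumes the $|T|=2$ instance of $(\star)$, so that instance must first be obtained from the three-element hypothesis independently of the lemma.
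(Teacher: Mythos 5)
Your argument is correct and is essentially the paper's proof rewritten in the language of inner automorphisms: your invariant $\delta(a,b)=v_a-g_b^{-1}v_ag_b=-g_b^{-1}[v_a,g_b]$ is exactly the element $r=g_m^{-1}z$ in Lemma~\ref{l for super}, your fixed-point statement $c_X\bigl(\delta(a,b)\bigr)=\delta(a,b)$ is that lemma's conclusion that $r$ commutes with the intermediate factors, and its single-index base case is Lemma~\ref{l deg1 super 2} (with Lemma~\ref{l deg1 super 1} giving the two expressions for $\delta$). The only real difference is cosmetic bookkeeping in the outer induction, where you track the intermediate family $(\star_{T,b})$ while the paper expands $\bigl[\sum_i v_{s_i}, g_S\bigr]$ via the Leibniz rule and invokes Corollary~\ref{c half ROTv2}.
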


Theorem~\ref{t half ROT} can be generalized
to a setting of algebras with derivations.
Recall~that a \emph{derivation} on a $\QQ$-algebra~$R$
is a $\QQ$-linear map $\partial:R\to R$
satisfying Leibniz's law
\[
\partial(fg)=\partial(f)g+f\partial(g).
\]

\begin{theorem}[The Rule of Three for derivations]
\label{th:derivations-ROT}
Let $R$ be a $\QQ$-algebra.
Let $\partial_1,\ldots,\partial_N$ be
derivations on~$R$,
and let $g_1, \ldots, g_N$ be potentially invertible elements of $R$ satisfying
\begin{align}
\partial_a(g_a) &= 0  \qquad \text{for all $1 \le a \le N$}; \label{der e halfrot 1}\\
(\partial_b + \partial_a)(g_bg_a) &= 0 \qquad \text{for all $1 \leq a < b \leq N$}; \label{der e halfrot 2}\\
(\partial_c + \partial_b + \partial_a)(g_cg_bg_a) &= 0 \qquad \text{for all $1 \leq a < b < c \leq N$}. \label{der e halfrot 3}
\end{align}
Then 
$(\partial_N + \partial_{N-1} + \cdots + \partial_1)(g_Ng_{N-1}\cdots
g_1) = 0$.
\end{theorem}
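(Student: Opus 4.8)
The plan is to deduce Theorem~\ref{th:derivations-ROT} from the Rule of Three for sums versus products (Theorem~\ref{t half ROT}) by realizing each derivation $\partial_i$ as the inner derivation $r\mapsto[v_i,r]$ attached to a suitable element $v_i$ of an overring $A\supseteq R$, arranged so that $g_1,\dots,g_N$ remain invertible in~$A$.

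First I would make the $g_i$ invertible. Potential invertibility of $\{g_1,\dots,g_N\}$ means exactly that $R$ embeds into its universal (Cohn) localization $R_\Sigma$ at $\Sigma=\{g_1,\dots,g_N\}$ (see \cite{cohn-universal}). Each $\partial_i$ then extends uniquely to a derivation $\widetilde{\partial_i}$ of $R_\Sigma$: the assignment $r\mapsto r+\partial_i(r)\varepsilon$ is a ring homomorphism $R\to R_\Sigma[\varepsilon]/(\varepsilon^2)$ (with $\varepsilon$ central) sending each $g\in\Sigma$ to a unit, so it factors uniquely through $R_\Sigma$; since $R\to R_\Sigma$ is a ring epimorphism, this factorization necessarily has the form $s\mapsto s+\widetilde{\partial_i}(s)\varepsilon$ with $\widetilde{\partial_i}$ a derivation of $R_\Sigma$ extending $\partial_i$. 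Because the $\widetilde{\partial_i}$ restrict to the $\partial_i$ on $R$ and $\partial_i(R)\subseteq R$, the hypotheses \eqref{der e halfrot 1}--\eqref{der e halfrot 3} persist for $R_\Sigma$, and an identity among elements of $R$ established inside $R_\Sigma$ holds already in~$R$.

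Next I would pass to the smash product. Let $\mathfrak{g}\subseteq\mathrm{Der}(R_\Sigma)$ be the Lie subalgebra generated by $\widetilde{\partial_1},\dots,\widetilde{\partial_N}$, let $A=R_\Sigma\# U(\mathfrak{g})$ be the smash product of $R_\Sigma$ with the universal enveloping algebra of~$\mathfrak{g}$, and set $v_i=1\#\widetilde{\partial_i}\in A$. By the Poincar\'e--Birkhoff--Witt theorem $A$ is free as a left $R_\Sigma$-module, so $R\hookrightarrow R_\Sigma\hookrightarrow A$ and the $g_i$ are units of~$A$; the smash-product relations give $[v_i,s]=\widetilde{\partial_i}(s)$ for all $s\in R_\Sigma$. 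Hence, for every $S\subset\{1,\dots,N\}$, since $g_S\in R$,
\[
\Big[\,\textstyle\sum_{i\in S}v_i,\ g_S\,\Big]=\Big(\textstyle\sum_{i\in S}\partial_i\Big)(g_S),
\]
so \eqref{der e halfrot 1}--\eqref{der e halfrot 3} say precisely that $\big[\sum_{i\in S}v_i,\,g_S\big]=0$ whenever $|S|\le 3$. Applying Theorem~\ref{t half ROT} in the ring $A$ to the elements $v_1,\dots,v_N$ and the potentially invertible elements $g_1,\dots,g_N$ then yields $\big[\sum_{i=1}^N v_i,\ g_Ng_{N-1}\cdots g_1\big]=0$ in $A$, that is, $(\partial_N+\cdots+\partial_1)(g_Ng_{N-1}\cdots g_1)=0$; this element lies in $R$ and $R\hookrightarrow A$, so the identity holds in~$R$, which is the assertion.

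The only genuinely delicate point is the reduction to invertible $g_i$: a derivation of $R$ need not extend to an arbitrary overring in which the $g_i$ become invertible, so one must use the canonical overring $R_\Sigma$, whose defining feature --- being a ring epimorphism out of $R$ --- is exactly what guarantees that derivations extend (uniquely). Everything else --- the smash-product construction, the PBW embedding, and rewriting the hypotheses as commutator relations --- is routine.
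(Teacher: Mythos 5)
Your proposal is correct, but it takes a genuinely different route from the paper's. The paper proves Theorem~\ref{th:derivations-ROT} by observing that its proof of Theorem~\ref{t half ROT} (Lemmas~\ref{l deg1 super 1}, \ref{l deg1 super 2}, \ref{l for super}, Corollary~\ref{c half ROTv2}, and the final Leibniz-rule induction) uses only the fact that $r\mapsto[v_i,r]$ is a derivation, so one simply replaces $[v_i,-]$ by $\partial_i$ throughout; notably, the one place where invertibility enters (Lemma~\ref{l for super}) involves only the fixed elements $z=\partial_{s_1}(g_{s_m})$ and $z'=-\partial_{s_m}(g_{s_1})$ of $R$, so the derivations themselves never need to be extended beyond $R$. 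You instead treat Theorem~\ref{t half ROT} as a black box and reduce to it by making the $\partial_i$ inner: extend each $\partial_i$ to the universal $\Sigma$-inverting localization $R_\Sigma$ (the dual-numbers argument combined with the epimorphism property of $R\to R_\Sigma$ is the right way to do this, and you are correct that this is the delicate step --- derivations do not extend to arbitrary overrings, so the canonical localization is essential), then pass to the smash product $R_\Sigma\#U(\mathfrak{g})$ where $[v_i,s]=\widetilde{\partial_i}(s)$ and PBW guarantees $R\hookrightarrow A$ with the $g_i$ invertible. What your argument buys is a formal transfer principle: any ``commutators with sums'' Rule of Three automatically yields the corresponding statement for derivations, with no need to reinspect the proof. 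What the paper's approach buys is economy: no localization theory or enveloping algebras, just the observation that the Leibniz rule is the only property used. (A small simplification of your construction: one could replace the smash product by $\operatorname{End}_{\QQ}(R_\Sigma)$, embedding $R_\Sigma$ via left multiplications and taking $v_i=\widetilde{\partial_i}$, since $[\widetilde{\partial_i},L_s]=L_{\widetilde{\partial_i}(s)}$.)
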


\begin{theorem}[The Rule of Three for products \emph{vs.}\ products
    and sums]
\label{t half group ROT}
Let $R$ be a~ring, \linebreak[3]
and let $g_1,\ldots, g_N, h_1,\ldots, h_N\in R$ be potentially invertible. 
Then the following are equivalent:
 \begin{itemize}
\item $[g_S, h_S]=[g_S, \sum_{i \in S}h_i]=0$ for all subsets $S\subset\{1,\dots,N\}$;
\item $[g_S, h_S]=[g_S, \sum_{i \in S}h_i]=0$ for all subsets $S$ of cardinality $\le 3$.
\end{itemize}
\end{theorem}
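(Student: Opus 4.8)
The plan is to reduce this ``products vs.\ products and sums'' statement to the already-available Rule of Three for noncommutative elementary symmetric functions (Theorem~\ref{t es commute uv 2}) together with the Rule of Three for sums vs.\ products (Theorem~\ref{t half ROT}), by passing to a polynomial ring in one central variable. Since $g_1,\dots,g_N$ are potentially invertible, we may harmlessly enlarge $R$ and assume each $g_i$ is invertible; the relations $[g_S,h_S]=[g_S,\sum_{i\in S}h_i]=0$ are unaffected. Now introduce a central variable $t$ and work in $R[t]$. The natural substitution is to replace $g_i$ by $g_i$ itself (no $t$-dependence) but to deform the $h_i$: set $\liftb_i = 1 + t h_i \in R[t]$. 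Then for $S=\{s_1<\cdots<s_m\}$ one has $\liftb_S = (1+t h_{s_m})\cdots(1+t h_{s_1}) = \sum_k t^k e_k(\mathbf{h}_S)$, so the single relation $[g_S,\liftb_S]=0$ in $R[t]$ is equivalent to the family of relations $[g_S, e_k(\mathbf{h}_S)]=0$ for all $k\ge 0$. In particular $k=1$ gives $[g_S,\sum_{i\in S}h_i]=0$ and $k=m=|S|$ gives $[g_S, h_S]=0$ (up to the invertible factor coming from the product of all $h_i$ in $S$ --- one must be slightly careful here, see below). Conversely the hypotheses for $|S|\le 3$ give $[g_S,\liftb_S]=0$ for $|S|\le 3$.

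The key steps, in order, are as follows. First, establish the dictionary above between the single $R[t]$-relation $[g_S,\liftb_S]=0$ and the family $\{[g_S,e_k(\mathbf{h}_S)]=0\}_k$; this is immediate from the generating-function expansion. Second --- and this is where the earlier theorems enter --- observe that to run the induction we need to know that $g_S$ commutes with \emph{every} $e_k(\mathbf{h}_S)$, not just $e_1$ and $e_{|S|}$. The point is that the $g_i$ are themselves products, so $g_S = \prod g_i$ is \emph{not} of the form $\sum_k x^k e_k(\mathbf{g}_S)$; instead we should deform the $g_i$ too. The cleaner route: put $\liftC_i = 1 + x g_i$ and $\liftb_i = 1 + y h_i$ in $R[x,y]$, so that $\liftC_S = \sum_k x^k e_k(\mathbf{g}_S)$ and $\liftb_S = \sum_\ell y^\ell e_\ell(\mathbf{h}_S)$. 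Then $[\liftC_S,\liftb_S]=0$ is equivalent to $[e_k(\mathbf{g}_S), e_\ell(\mathbf{h}_S)]=0$ for all $k,\ell$. Now the hypotheses $[g_S,h_S]=0$ and $[g_S,\sum_{i\in S}h_i]=0$ for $|S|\le 3$ must be shown to imply $[\liftC_S,\liftb_S]=0$ for $|S|\le 3$; once that is done, Theorem~\ref{t es commute uv 2} (applied to $\mathbf{u}=\mathbf{g}$, $\mathbf{v}=\mathbf{h}$) upgrades this to $[\liftC_S,\liftb_S]=0$ for all $S$, whence $[e_k(\mathbf{g}_S),e_\ell(\mathbf{h}_S)]=0$ for all $k,\ell,S$; taking $k=|S|$, $\ell=1$ and $k=|S|$, $\ell=|S|$ recovers $[g_S,\sum_{i\in S}h_i]=0$ and $[g_S,h_S]=0$. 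Third, fill in the base-case reduction: for $|S|\le 3$ we must derive the full list of relations $[e_k(\mathbf{g}_S),e_\ell(\mathbf{h}_S)]=0$ from just $[g_S,h_S]=0$ and $[g_S,\sum_{i\in S}h_i]=0$. For $|S|=1$ everything is trivial. For $|S|=2$, $S=\{a<b\}$: $e_2(\mathbf{g}_S)=g_bg_a=g_S$ and $e_1(\mathbf{g}_S)=g_b+g_a$, and we know $g_bg_a$ commutes with $h_bh_a$ and with $h_a+h_b$; we need $g_b+g_a$ to commute with $h_bh_a$ and with $h_a+h_b$, i.e.\ $[g_a+g_b, h_a+h_b]=0$ and $[g_a+g_b, h_bh_a]=0$. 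This is exactly the content of Theorem~\ref{t half ROT} and its symmetric (swap $g\leftrightarrow h$) counterpart, applied within the subalgebra generated by $g_a,g_b,h_a,h_b$ --- or more precisely of the $|S|\le 2$ part of Theorem~\ref{t half group ROT} itself, which must be checked directly by hand (a short computation: $g_ag_b$ and $g_bg_a$ both commuting with $h_a+h_b$, plus $g_bg_a$ commuting with $h_bh_a$, forces the symmetrized statements --- one uses that $g_a,g_b$ are invertible). The $|S|=3$ case is the genuinely laborious one and is handled the same way, combining Theorem~\ref{t half ROT} (to get sums of $g_i$ commuting with the relevant products and sums of $h_i$) with direct manipulation; the hypotheses $[g_{ijk},h_{ijk}]=0$ and $[g_{ijk},h_i+h_j+h_k]=0$ for the triple and all its sub-pairs and singletons are the input.

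The main obstacle is the third step: verifying, by explicit albeit finite computation, that for $|S|\in\{2,3\}$ the two hypothesized families of relations (products commuting with products, and products commuting with sums) force all the ``mixed'' commutations $[e_k(\mathbf{g}_S), e_\ell(\mathbf{h}_S)]=0$ needed to feed into Theorem~\ref{t es commute uv 2}. I expect this to dovetail with the algebraic identities already developed for the proofs of Theorems~\ref{t half ROT} and~\ref{t es commute uv 2} --- in particular the trick of conjugating by the (invertible) full product $g_S$ or $h_S$ to convert a statement about $e_k$ into one about $e_{|S|-k}$ --- so that the verification, while not short, is routine in character rather than requiring a new idea. Once the $|S|\le 3$ base is in place, the passage to all $S$ is a black-box application of Theorem~\ref{t es commute uv 2}, and extracting the two desired relations from the conclusion is immediate.
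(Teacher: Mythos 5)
Your reduction breaks down exactly at the step you flag as the ``main obstacle,'' and the problem is not that the verification is laborious --- it is that the base case you need is false. To invoke Theorem~\ref{t es commute uv 2} you must first establish $[e_k(\mathbf{g}_S),e_\ell(\mathbf{h}_S)]=0$ for $|S|\le 3$ and (at least) $k\ell\le 3$; this includes $[e_1(\mathbf{g}_S),e_1(\mathbf{h}_S)]=0$ for $|S|=2$ and $[e_1(\mathbf{h}_S),e_2(\mathbf{g}_S)]=0$ for $|S|=3$, i.e.\ relation~\eqref{eq:e1-comm-with-e2}. These do \emph{not} follow from the hypotheses $[g_S,h_S]=[g_S,\sum_{i\in S}h_i]=0$ for $|S|\le 3$, even with invertibility. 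Remark~\ref{rem:e1-comm-with-e2} gives an explicit counterexample: with $g_i=1+x^3u_i^3+x^4u_i^4+x^5u_i^5$ and $h_i=1+yv_i$, and imposing only the relations coming from $[g_S,h_S]=0$ for $|S|\le 3$, one gets $[g_S,\sum_{i\in S}h_i]=0$ for free (take the coefficient of $y$), so all hypotheses of Theorem~\ref{t half group ROT} hold for $|S|\le 3$ --- yet \eqref{eq:e1-comm-with-e2} fails. So no amount of finite manipulation will produce the missing mixed commutations; they are simply not consequences of the hypotheses. (Consistently with this, the conclusion of Theorem~\ref{t half group ROT} deliberately omits $[\sum_{i\in S}g_i,\sum_{i\in S}h_i]=0$ and $[\sum_{i\in S}g_i,h_S]=0$; obtaining those requires the extra hypotheses of Corollary~\ref{cor:*+vs*+}.)

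The paper's proof avoids this trap by never trying to commute all the $e_k$'s. From the hypotheses it extracts only the weaker \emph{intertwining} identities $g_b[g_a,h_b]=[h_a,g_b]g_a$ and $g_cg_b[g_a,h_c]=[h_a,g_c]g_bg_a$ (Lemmas~\ref{l deg1 super 1}--\ref{l deg1 super 2}), derives the mirror identities for the $h$'s from these together with $[g_S,h_S]=0$ for $|S|\le 3$ (Lemmas~\ref{lem:strong-commute-abc}--\ref{lem:strong-commute-ab}), and then runs an induction on $|S|$ (Theorem~\ref{th:master-criterion}) in which Lemma~\ref{l for super} propagates the intertwinings to long products and Lemma~\ref{lem:strong-commute-abc} closes the induction. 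Your generating-function dictionary in the first part of the plan is fine and is indeed how the paper later deduces Theorem~\ref{t es commute uv 2} \emph{from} this circle of ideas, but the implication cannot be run in the direction you propose.
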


\begin{theorem}
\label{t 2variable linear strengthened}
Let $R$ be a ring,
and let $g_1, \ldots, g_N, h_1, \ldots, h_N\in R$
be potentially invertible elements satisfying the relations
\begin{align}
[h_a,g_a] &= 0  \qquad \text{for all $1 \le a \le N$}; \label{e linear 1}\\
[h_b+h_a,g_b+g_a] &= 0  \qquad \text{for all $1 \le a < b  \le N$}; \label{e linear 2sum}\\
[h_b + h_a, g_bg_a] &= 0 \qquad \text{for all $1 \leq a < b \leq
  N$}; \label{e linear 22}
\\
[h_c + h_b + h_a, g_cg_bg_a] &= 0 \qquad \text{for all $1 \leq a < b < c
  \leq N$};
\label{e linear 33}
\\
[g_b + g_a, h_bh_a] &= 0 \qquad \text{for all $1 \leq a < b \leq N$}; \label{e linear 2b}\\
[g_c + g_b + g_a, h_ch_bh_a] &= 0 \qquad \text{for all $1 \leq a < b < c \leq N$}. \label{e linear 3b}
\end{align}
Then   $\left[\textstyle\sum_{i \in S} g_i, \textstyle\sum_{i \in S} h_i \right]
\!=\! \left[\textstyle\sum_{i \in S} g_i, h_S \right]
\!=\! \left[g_S, \textstyle\sum_{i \in S} h_i \right]
\!=\![g_S, h_S]
\!=\! 0 $ \, for all subsets $S$.  
\end{theorem}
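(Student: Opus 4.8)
The plan is to verify each of the four commutation families for subsets of size at most three and then bootstrap to arbitrary subsets using the Rules of Three already in hand, exploiting the fact that the hypotheses \eqref{e linear 1}--\eqref{e linear 3b} are symmetric under interchanging the two tuples $(g_1,\dots,g_N)$ and $(h_1,\dots,h_N)$ (the swap exchanges \eqref{e linear 22}$\leftrightarrow$\eqref{e linear 2b} and \eqref{e linear 33}$\leftrightarrow$\eqref{e linear 3b} and fixes \eqref{e linear 1} and \eqref{e linear 2sum}).

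The \emph{sum--sum} relations cost nothing. For any $S$,
\[
\Big[\textstyle\sum_{i\in S}g_i,\ \sum_{i\in S}h_i\Big]=\sum_{i\in S}[g_i,h_i]+\sum_{\{i,j\}\subseteq S,\ i\ne j}\big([g_i,h_j]+[g_j,h_i]\big),
\]
and every summand vanishes: the first by \eqref{e linear 1}, and $[g_i,h_j]+[g_j,h_i]=[g_i+g_j,\,h_i+h_j]-[g_i,h_i]-[g_j,h_j]=0$ by \eqref{e linear 2sum} and \eqref{e linear 1}. Next, the relations \eqref{e linear 1}, \eqref{e linear 22}, \eqref{e linear 33} are precisely the cases $|S|\le 3$ of $\big[\sum_{i\in S}h_i,\,g_S\big]=0$; since the $g_i$ are potentially invertible, Theorem~\ref{t half ROT} (with its ``$v_i$'' taken to be $h_i$) promotes this to all $S$, which is the family $\big[g_S,\sum_{i\in S}h_i\big]=0$. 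Applying the same theorem after the $g\leftrightarrow h$ swap (using now that the $h_i$ are potentially invertible, with \eqref{e linear 1}, \eqref{e linear 2b}, \eqref{e linear 3b} as the $|S|\le 3$ inputs) gives $\big[\sum_{i\in S}g_i,\,h_S\big]=0$ for all $S$. Thus only the family $[g_S,h_S]=0$ remains.

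For this I would invoke Theorem~\ref{t half group ROT}, whose two hypotheses are $[g_S,h_S]=0$ and $[g_S,\sum_{i\in S}h_i]=0$ for $|S|\le 3$; the second is a special case of what was just proved, so everything reduces to $[g_S,h_S]=0$ for $|S|\le 3$. The case $|S|=1$ is \eqref{e linear 1}. For $S=\{a<b\}$, set $c=[h_b,g_a]$; then \eqref{e linear 1} with \eqref{e linear 2sum} forces $[h_a,g_b]=-c$ (equivalently $c=[g_b,h_a]$), and expanding \eqref{e linear 22} and \eqref{e linear 2b} by the Leibniz rule $[x,yz]=[x,y]z+y[x,z]$ and using the singleton relations yields the ``conjugation'' identities $g_bc=c\,g_a$ and $h_bc=c\,h_a$. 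Consequently
\[
[g_bg_a,\,h_bh_a]=g_b[g_a,h_bh_a]+[g_b,h_bh_a]\,g_a=-g_bc\,h_a+h_bc\,g_a=-c\,g_ah_a+c\,h_ag_a=0,
\]
the last step by $[g_a,h_a]=0$.

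The crux is the case $S=\{a<b<c\}$, and the plan is to run the same bookkeeping one level up. Introduce the pairwise discrepancies $c_{ab}=[h_b,g_a]$, $c_{bc}=[h_c,g_b]$, $c_{ac}=[h_c,g_a]$; note \eqref{e linear 2sum} pins down the reversed brackets $[h_a,g_b]=-c_{ab}$, $[h_b,g_c]=-c_{bc}$, $[h_a,g_c]=-c_{ac}$; and recall from the $|S|=2$ analysis that $g_jc_{ij}=c_{ij}g_i$ and $h_jc_{ij}=c_{ij}h_i$ for each pair $i<j$. Expanding $\big[h_a+h_b+h_c,\ g_cg_bg_a\big]=0$ from \eqref{e linear 33} and simplifying with these conjugation identities collapses, after several pairwise cancellations, to the single relation $g_cg_b\,c_{ac}=c_{ac}\,g_bg_a$; dually, \eqref{e linear 3b} yields $h_ch_b\,c_{ac}=c_{ac}\,h_bh_a$. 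One then expands $[g_cg_bg_a,\,h_ch_bh_a]$ by Leibniz and checks — using all of the relations above, and if necessary passing to an overring in which the $g_i,h_i$ are invertible — that the result cancels term by term. Granting this, Theorem~\ref{t half group ROT} supplies $[g_S,h_S]=0$ for every $S$, completing the proof. I expect this triple cancellation to be the main obstacle: as the $|S|=2$ computation already suggests, it rests on a handful of non-obvious coincidences among conjugated discrepancy elements, and tracking them for three indices — while isolating exactly which hypotheses and which consequences of potential invertibility are actually needed — is the delicate part.
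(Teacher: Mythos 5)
Your plan is sound and reaches the goal by a route that genuinely differs from the paper's. The paper disposes of the sum--sum, sum--product, and product--sum families exactly as you do (Remark~\ref{rem:add-rule-of-two} and Theorem~\ref{t half ROT}), but for $[g_S,h_S]=0$ it never isolates the cases $|S|\le 3$: instead it verifies the hypotheses of a master criterion (Corollary~\ref{c 2variable linear strengthened v2}, resting on Theorem~\ref{th:master-criterion}) --- namely the conjugation identities $g_b[g_a,h_b]=[h_a,g_b]g_a$, $g_cg_b[g_a,h_c]=[h_a,g_c]g_bg_a$ and their $h$-counterparts, obtained from \eqref{e linear 22}--\eqref{e linear 3b} via Lemmas~\ref{l deg1 super 1}--\ref{l deg1 super 2} --- and that criterion runs the induction on $|S|$ internally. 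You instead prove $[g_S,h_S]=0$ for $|S|\le 3$ by hand and then cite Theorem~\ref{t half group ROT}; this is legitimate (no circularity: that theorem depends only on Theorem~\ref{th:master-criterion} and the lemmas of Section~\ref{sec:main-proofs}), your $|S|=2$ computation is correct, and your reductions of \eqref{e linear 33} and \eqref{e linear 3b} to $g_cg_b\,c_{ac}=c_{ac}\,g_bg_a$ and $h_ch_b\,c_{ac}=c_{ac}\,h_bh_a$ do check out after the pairwise cancellations you describe. The one step you defer (``granting this'') does go through, and you need not redo the cancellation from scratch: since $[g_a,h_c]=[h_a,g_c]=-c_{ac}$, your two conjugation identities are precisely the second and third conditions of Lemma~\ref{lem:strong-commute-abc}, applied to the singletons $g_a,g_b,g_c,h_a,h_b,h_c$ (whose remaining hypotheses \eqref{eq:bB=Bb}--\eqref{eq:cbCB=CBcb} you already have from the $|S|\le 2$ analysis), so that lemma delivers $[g_cg_bg_a,h_ch_bh_a]=0$. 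The invertibility you anticipate needing is exactly what that lemma's proof consumes ($h_b^{-1}$ appears in identity \eqref{eq:[cba,CBA]}), consistent with Remark~\ref{rem invertibility needed}. What the paper's route buys is economy --- a single induction in place of an explicit $|S|=3$ computation followed by a second, black-boxed induction --- while yours makes the low-cardinality cases, and the precise point at which potential invertibility enters, completely explicit.
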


\begin{remark}
\label{rem:add-rule-of-two}
It is easy to  see that the relations \eqref{e linear 1}--\eqref{e linear 2sum}
alone imply
\[
\left[\textstyle\sum_{i \in S} g_i, \textstyle\sum_{i \in S} h_i \right] = 0
\]
for all $S \subset \{1,\ldots, N\}$;
this implication can be regarded as an ``Additive Rule of Two.''
\end{remark}

Theorem~\ref{t 2variable linear strengthened} implies (and so
can be regarded as a strengthening of) the following rule.


\begin{corollary}[The Rule of Three for  products and sums
\emph{vs.}\ products and sums]
\label{cor:*+vs*+}
Let $R$ be a ring, and let $g_1,\ldots, g_N, h_1,\ldots, h_N\in R$ be  potentially
invertible.
Then the following are equivalent:
\begin{itemize}
\item
$\left[\sum_{i \in S} g_i, \sum_{i \in S} h_i \right] =
\left[\sum_{i \in S} g_i, h_S \right] = \left[g_S, \sum_{i \in S}
    h_i \right] = [g_S, h_S] = 0$ for all $S$;
\item
$\left[\sum_{i \in S} g_i, \sum_{i \in S} h_i \right] =
  \left[\sum_{i \in S} g_i, h_S \right] = \left[g_S, \sum_{i \in S}
    h_i \right] = [g_S, h_S] = 0$ for all $|S| \le 3$.
\end{itemize}
\end{corollary}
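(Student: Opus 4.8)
The plan is to deduce Corollary~\ref{cor:*+vs*+} directly from Theorem~\ref{t 2variable linear strengthened}; the text already advertises the latter as a strengthening of the former, so this is exactly the intended route. The implication from the first bullet to the second is immediate, since the second is a special case of the first. Thus all the work lies in the converse direction: assuming the four commutation identities
\[
\big[\textstyle\sum_{i \in S} g_i, \textstyle\sum_{i \in S} h_i \big] = \big[\textstyle\sum_{i \in S} g_i, h_S \big] = \big[g_S, \textstyle\sum_{i \in S} h_i \big] = [g_S, h_S] = 0
\]
for every subset $S$ with $|S| \le 3$, I want to conclude that they hold for all subsets $S$.

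The key observation is that the hypotheses \eqref{e linear 1}--\eqref{e linear 3b} of Theorem~\ref{t 2variable linear strengthened} are all included among the assumed $|S|\le 3$ instances. Concretely: for $S=\{a\}$ each of the four identities degenerates to $[g_a,h_a]=0$, which is \eqref{e linear 1}. For $S=\{a<b\}$, the identity $[\sum_{i\in S}g_i,\sum_{i\in S}h_i]=0$ is \eqref{e linear 2sum}, the identity $[g_S,\sum_{i\in S}h_i]=[g_bg_a,\,h_a+h_b]=0$ is \eqref{e linear 22}, and $[\sum_{i\in S}g_i,h_S]=[g_a+g_b,\,h_bh_a]=0$ is \eqref{e linear 2b}. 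For $S=\{a<b<c\}$, the identity $[g_S,\sum_{i\in S}h_i]=[g_cg_bg_a,\,h_a+h_b+h_c]=0$ is \eqref{e linear 33}, and $[\sum_{i\in S}g_i,h_S]=[g_a+g_b+g_c,\,h_ch_bh_a]=0$ is \eqref{e linear 3b}; here one uses only the reversed-product convention $g_S=g_{s_m}\cdots g_{s_1}$ of \eqref{eq:g_S} together with $[g,h]=0 \Leftrightarrow [h,g]=0$. Since $g_1,\dots,g_N,h_1,\dots,h_N$ are potentially invertible by hypothesis, Theorem~\ref{t 2variable linear strengthened} applies and produces precisely the four identities above for all $S$, which is the first bullet.

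I expect no genuine obstacle in this deduction: it is just a dictionary translating the assumed small-support instances into the six relation families \eqref{e linear 1}--\eqref{e linear 3b}, so whatever difficulty exists lives inside the proof of Theorem~\ref{t 2variable linear strengthened} (carried out elsewhere in the paper), not here. The only points that require care are clerical: matching the reversed-product convention $g_S = g_{s_m}\cdots g_{s_1}$ to the left-to-right ordering used in \eqref{e linear 22}--\eqref{e linear 3b}, and tracking the swap of the roles of the $g$'s and $h$'s between the commutators in the Corollary and those in the Theorem. It is worth noting that not all of the assumed $|S|\le 3$ identities are actually used -- for instance the instances $[g_S,h_S]=0$ with $|S|\in\{2,3\}$ are never invoked -- so Corollary~\ref{cor:*+vs*+} is genuinely a weaker companion to Theorem~\ref{t 2variable linear strengthened}, and this slack is exactly why the deduction is so short.
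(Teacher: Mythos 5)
Your proposal is correct and follows exactly the paper's intended route: the corollary is stated as an immediate consequence of Theorem~\ref{t 2variable linear strengthened}, and your dictionary matching the assumed $|S|\le 3$ instances to the hypotheses \eqref{e linear 1}--\eqref{e linear 3b} (including the observation that $[g_S,h_S]=0$ for $|S|\in\{2,3\}$ is never needed) is accurate. The paper also notes an alternative one-line derivation from Theorem~\ref{t half group ROT} together with Remark~\ref{rem:add-rule-of-two}, but your argument is the primary one intended.
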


We note that Corollary~\ref{cor:*+vs*+} is also immediate from
Theorem~\ref{t half group ROT} and Remark~\ref{rem:add-rule-of-two}.



\begin{remark}
\label{rem invertibility needed}
The cases $|S|\le 3$ of
Theorem~\ref{t 2variable linear
    strengthened} hold without the requirement of potential
invertibility; this can be verified by a noncommutative Gr\"obner
basis calculation.
However, for \mbox{$|S|\ge 4$}, this requirement cannot be dropped.
More precisely, in the free associative algebra $\QQ\langle
g_1, g_2, g_3, g_4, h_1, h_2, h_3, h_4 \rangle$,
the two-sided ideal generated by the left-hand sides of
\eqref{e linear 1}--\eqref{e linear 3b} does not contain the element
$[g_4g_3g_2g_1,h_4h_3h_2h_1]$
(nor does it contain \mbox{$[h_4+h_3+h_2+h_1, g_4g_3g_2g_1]$}).
This was checked using a noncommutative Gr\"obner basis calculation in
\texttt{Magma}~\cite{Magma}.
\end{remark}

\begin{remark}
\label{rem:e1-comm-with-e2}
As explained in Section~\ref{sec:main-proofs},
Theorem~\ref{t 2variable linear strengthened}
directly implies Theorem~\ref{t es commute uv 2} via the substitutions
$g_i =1+xu_i$,  $h_i = 1+yv_i$.
On the other hand, if we think of $g_i $ and $h_i$ in Theorem~\ref{t
  2variable linear strengthened} as $u_i$ and $v_i$,
then Theorems~\ref{t 2variable linear strengthened} and~\ref{t es
  commute uv 2}
are ``incomparable'':
in Theorem~\ref{t 2variable linear strengthened},
we do not need the relations
\begin{equation}
\label{eq:e1-comm-with-e2}
[h_c + h_b + h_a, g_cg_b + g_cg_a + g_bg_a] = 0,
\end{equation}
but we do require $g_i$ and $h_i$ to be potentially invertible
(cf.\ Remark~\ref{rem invertibility needed}),
while Theorem~\ref{t es commute uv 2} requires  $[e_1(\mathbf{v}_S),
e_2(\mathbf{u}_S)] = 0$ for  $|S| = 3$ but not invertibility.

To further clarify matters,
we note that the assumptions of Theorem~\ref{t 2variable linear strengthened}
(including invertibility) do not imply~\eqref{eq:e1-comm-with-e2}.
To see this, take $N\!=\!3$,
$g_i = 1+x^3u_i^3 + x^4u_i^4 + x^5u_i^5$,
and $h_i = 1+yv_i$ for $i = 1, 2, 3$.
Impose relations on the $u_i, v_i$ derived from the conditions $g_Sh_S = h_Sg_S$ for all
$S$ of cardinality~$\le 3$.
Then relations \eqref{e linear 1}--\eqref{e linear 3b} hold
but \eqref{eq:e1-comm-with-e2} (with $(a,b,c)=(1,2,3)$) does not.
This was checked via a noncommutative Gr\"obner basis computation.
\end{remark}

Theorem \ref{t half ROT} and
Theorems \ref{t half quadratic half group ROT}--\ref{t half group ROTv2} below
form a natural progression.

\begin{theorem}[The Rule of Three for products \emph{vs.}\ sums and quadratic forms]
\label{t half quadratic half group ROT}
Let $R$ be a ring.
Let $v_1, \ldots, v_N \in R$ and $g_1, \ldots, g_N\in R$,
with $g_1,\dots,g_N$ potentially invertible.
Then the following are equivalent:
\begin{itemize}
\item
$g_S\, e_\ell(\mathbf{v}_S) = e_\ell(\mathbf{v}_S)\, g_S$ for all subsets
  $S\subset\{1,\dots,N\}$ and $\ell\le 2$;
\item
$g_S\, e_\ell(\mathbf{v}_S) = e_\ell(\mathbf{v}_S)\, g_S$ for all subsets
$S$ of cardinality~$\le 3$ and $\ell\le 2$.
\end{itemize}
\end{theorem}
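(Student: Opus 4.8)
The plan is to deduce Theorem~\ref{t half quadratic half group ROT} from Theorem~\ref{t half group ROT} by a generating-function reduction that is truncated at the quadratic level. First I would introduce a central variable $y$ and pass to the ring $A = R[y]/(y^3)$, setting $h_i = 1 + y v_i \in A$ for each $i$. Each $h_i$ is invertible in $A$, with inverse $1 - y v_i + y^2 v_i^2$, and the $g_i$, regarded inside $A$, remain potentially invertible: an embedding $R\hookrightarrow\overline{R}$ making all $g_i$ invertible induces an embedding $A = R[y]/(y^3) \hookrightarrow \overline{R}[y]/(y^3)$ in which the same formulas serve as inverses for both the $g_i$ and the $h_i$. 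Hence Theorem~\ref{t half group ROT} applies to $g_1,\dots,g_N,h_1,\dots,h_N\in A$.

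Next I would record the key identity: expanding $h_S = (1+yv_{s_m})\cdots(1+yv_{s_1})$ and reducing modulo~$y^3$ yields
\[
h_S = 1 + y\, e_1(\mathbf{v}_S) + y^2\, e_2(\mathbf{v}_S) \qquad\text{in } A,
\]
for every subset $S=\{s_1<\cdots<s_m\}$. Since $A$ is free over $R$ with basis $1, y, y^2$ and $y$ is central, the relation $[g_S, h_S] = 0$ in $A$ is equivalent to the two relations $[g_S, e_1(\mathbf{v}_S)] = 0$ and $[g_S, e_2(\mathbf{v}_S)] = 0$ in $R$, the coefficient of $y^0$ being $[g_S,1]=0$ automatically. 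Moreover $\sum_{i\in S} h_i = |S| + y\, e_1(\mathbf{v}_S)$, so $[g_S, \sum_{i\in S} h_i] = y\,[g_S, e_1(\mathbf{v}_S)]$, which vanishes whenever $[g_S, h_S]$ does. The truncation at $y^3$ is exactly what makes the argument work: an untruncated $h_S$ would, for $|S|=3$, also involve $e_3(\mathbf{v}_S)$, which is precisely the commutation we are not allowed to assume.

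Granting these observations, the conclusion is immediate. Assume the $|S|\le 3$ hypothesis of the theorem. Then for every $S$ with $|S|\le 3$ we have $[g_S, h_S] = 0$ (the instances $\ell = 1, 2$, together with $\ell = 0$, which is free) and hence also $[g_S, \sum_{i\in S} h_i] = 0$; so the $|S|\le 3$ hypothesis of Theorem~\ref{t half group ROT} holds in the ring $A$. That theorem then gives $[g_S, h_S] = 0$ for all $S\subset\{1,\dots,N\}$, and reading off the coefficients of $y$ and $y^2$ gives $g_S\, e_\ell(\mathbf{v}_S) = e_\ell(\mathbf{v}_S)\, g_S$ for all $S$ and all $\ell\le 2$. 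The reverse implication is trivial, which completes the equivalence.

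I do not expect a genuine obstacle here: the substance of the statement is carried entirely by Theorem~\ref{t half group ROT}, and the only points needing care are bookkeeping ones --- that potential invertibility is inherited by $R[y]/(y^3)$ (with $1+yv_i$ invertible there) and that the truncated product formula for $h_S$ is correct. One could equivalently run the whole argument inside $\overline{R}[y]/(y^3)$ after first replacing $R$ by an overring in which the $g_i$ are honestly invertible, or start from Theorem~\ref{t half ROT} for the $\ell=1$ part and invoke Theorem~\ref{t half group ROT} only to obtain the extra quadratic relation; these are all equivalent repackagings of the same idea.
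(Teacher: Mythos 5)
Your proposal is correct and follows essentially the same route as the paper, which proves this theorem by setting $h_i = 1+yv_i$ in $R[y]/(y^3)$ and invoking Theorem~\ref{t half group ROT}, exactly as you do. Your write-up simply makes explicit the bookkeeping (invertibility of the $h_i$, inheritance of potential invertibility of the $g_i$ in the truncated polynomial ring, and the identity $h_S = 1 + y\,e_1(\mathbf{v}_S) + y^2\,e_2(\mathbf{v}_S)$) that the paper leaves to the reader.
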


\begin{theorem}[The Rule of Three for products \emph{vs.}\ elementary symmetric functions]
\label{t half group ROTv2}
Let~$R$ be a ring.
Let $v_1, \ldots, v_N \in R$ and $g_1, \ldots, g_N\in R$,
with $g_1,\dots,g_N$ potentially invertible.
Then the following are equivalent:
\begin{itemize}
\item
$g_S\, e_\ell(\mathbf{v}_S) = e_\ell(\mathbf{v}_S)\, g_S$ for all subsets
  $S\subset\{1,\dots,N\}$ and all~$\ell$;
\item
$g_S\, e_\ell(\mathbf{v}_S) = e_\ell(\mathbf{v}_S)\, g_S$ for all subsets
$S$ of cardinality~$\le 3$ and all~$\ell$.
\end{itemize}
\end{theorem}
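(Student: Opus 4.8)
The plan is to derive Theorem~\ref{t half group ROTv2} from Theorem~\ref{t half group ROT} by packaging the whole family $\{e_\ell(\mathbf{v}_S)\}_{\ell\ge 0}$ into a single generating function, i.e., by the change of viewpoint described at the start of Section~\ref{s generating functions over rings}. The first bullet trivially implies the second, so only the converse needs an argument.

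First I would pass to the polynomial ring $R[y]$ (with $y$ central) and set $h_i = 1 + y v_i$ for $i = 1,\dots,N$. Then, as in~\eqref{eq:hS-linear}, one has $h_S = \sum_{\ell} y^\ell\, e_\ell(\mathbf{v}_S)$ for every subset $S$, so for a fixed $S$ the single relation $[g_S, h_S] = 0$ in $R[y]$ is equivalent, by comparing coefficients of powers of $y$, to the entire collection $[g_S, e_\ell(\mathbf{v}_S)] = 0$, $\ell \ge 0$. Moreover $\sum_{i\in S} h_i = |S|\cdot 1 + y\, e_1(\mathbf{v}_S)$, so $[g_S, \sum_{i\in S} h_i] = y\,[g_S, e_1(\mathbf{v}_S)]$, which vanishes precisely when $[g_S, e_1(\mathbf{v}_S)] = 0$. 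Hence the hypothesis of Theorem~\ref{t half group ROTv2} for subsets of size $\le 3$ (which includes the cases $\ell=1$) translates exactly into: $[g_S, h_S] = 0$ and $[g_S, \sum_{i\in S} h_i] = 0$ for all $|S|\le 3$.

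Next I would verify that $g_1,\dots,g_N,h_1,\dots,h_N$ are potentially invertible in $R[y]$: by assumption $R$ embeds in a ring $\tilde R$ in which every $g_i$ is invertible, and then $R[y] \hookrightarrow \tilde R[[y]]$, a ring in which each $g_i$ is still a unit and each $h_i = 1 + y v_i$ is a unit as well. So Theorem~\ref{t half group ROT} applies to $(g_i)$ and $(h_i)$ over $R[y]$ and upgrades the size-$\le 3$ relations above to $[g_S, h_S] = 0$ for all $S \subset \{1,\dots,N\}$. Extracting the coefficient of $y^\ell$ then yields $[g_S, e_\ell(\mathbf{v}_S)] = 0$ for all $S$ and all $\ell$, which is the first bullet.

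There is no real obstacle beyond the bookkeeping of potential invertibility: one must point to a single overring (namely $\tilde R[[y]]$) in which all $2N$ elements $g_i$ and $1+y v_i$ are simultaneously invertible, so that the hypotheses of Theorem~\ref{t half group ROT} are literally met. All the mathematical weight of Theorem~\ref{t half group ROTv2} is carried by Theorem~\ref{t half group ROT}; the remainder is the generating-function dictionary. It is worth noting that this clean reduction works precisely because the \emph{full} generating function $\sum_\ell y^\ell e_\ell(\mathbf{v}_S)$ coincides with the product $h_S$ for $h_i = 1 + y v_i$, so it lies in the scope of Theorem~\ref{t half group ROT}; for the truncated statement (Theorem~\ref{t half quadratic half group ROT}, where only $\ell \le 2$ is required) the corresponding truncated generating function is not a product, which is why that case needs a separate argument and sits as an intermediate step in the progression Theorem~\ref{t half ROT} $\to$ Theorem~\ref{t half quadratic half group ROT} $\to$ Theorem~\ref{t half group ROTv2}.
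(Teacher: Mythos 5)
Your proof is correct and is essentially the paper's own argument: the paper's proof also sets $h_i = 1+yv_i \in R[y]$ and invokes Theorem~\ref{t half group ROT}, merely stating that ``one then checks'' the translation that you spell out explicitly (the identification $h_S=\sum_\ell y^\ell e_\ell(\mathbf{v}_S)$, the identity $[g_S,\sum_{i\in S}h_i]=y[g_S,e_1(\mathbf{v}_S)]$, and the simultaneous invertibility of all $g_i$ and $h_i$ in $\tilde R[[y]]$). The only quibble is with your closing aside: the paper handles Theorem~\ref{t half quadratic half group ROT} by the very same device, simply working in $R[y]/(y^3)$, where the truncated generating function does again coincide with $h_S$.
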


\begin{remark}
Theorem~\ref{t half group ROTv2} implies a variant of the
Multiplicative Rule of Three
(see Corollary~\ref{c half group ROT}) which generalizes
Theorem~\ref{t half group ROT corollary mini}
(which in turn generalizes Corollary~\ref{cor:rule-of-3-uv-linear}).
%
\end{remark}




\section{Dehn diagrams. 
Group-theoretic lemmas}
\label{sec:rot-groups}

For the purposes of this paper, a Dehn diagram
(a simplified version of the notion of van Kampen diagram,
see, e.g., \cite[Section~4]{peifer})
is a planar oriented graph whose edges are labeled by elements of a
group, so that each cycle corresponds to a relation in the group.
A more precise formulation is given in
Definition~\ref{def:dehn-diagram} below.

\begin{definition}
\label{def:dehn-diagram}
Let $D$ be a finite oriented graph properly embedded in the real plane;
that is, it is drawn so that its edges only meet at common endpoints.
We require each vertex of $D$ to have at least two incident edges.
The complement of $D$ in the plane is a dis\-joint union of \emph{faces}:
some \emph{bounded faces} homeomorphic to disks, and a single \emph{outer~face}.

Assume that  every edge of~$D$ has been labeled
by an element
of a group~$G$.
Such an edge-labeled oriented graph is called a \emph{Dehn diagram}
if the product along the boundary~$\partial F$ of each bounded face~$F$ is equal to~$1$.
More precisely, starting with an arbitrary vertex on~$\partial F$
and moving either clockwise or counterclockwise, we multiply the
elements of~$G$ associated with the edges, inverting them when
moving against the orientation of an~edge.
It is easy to see that this condition does not depend on the starting
location on~$\partial F$.
\end{definition}

The following simple but useful observation goes back to M.~Dehn.

\begin{lemma}
\label{lem:dehn-outer}
In a Dehn diagram, the product of labels along the boundary of
the outer face is equal to~$1$.
\end{lemma}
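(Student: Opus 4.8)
The plan is to prove Lemma~\ref{lem:dehn-outer} by induction on the number of bounded faces of the Dehn diagram $D$. The base case is when $D$ has no bounded faces; then the claim is either vacuous or reduces to the observation that any closed walk in a tree (or, more generally, a graph with no bounded faces) multiplies out to $1$, since every edge is traversed an equal number of times in each direction as one goes around the outer boundary. A clean way to handle this is to note that contracting a leaf edge, or more generally removing an edge incident to a degree-one vertex, does not change the outer boundary product, so one can reduce to a single vertex.

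For the inductive step, suppose $D$ has at least one bounded face. The idea is to peel off an ``outermost'' bounded face $F$ — one whose boundary $\partial F$ shares at least one edge with the boundary of the outer face. (Such a face exists: walk inward from the outer face through any edge and eventually you reach a bounded face whose closure touches the outer boundary.) Let $\gamma$ be the boundary walk of the outer face of $D$, and let $\delta$ be the boundary walk of $F$. The edges that $F$ shares with the outer face form a nonempty union of arcs on $\gamma$; removing those shared edges from $D$ (and then deleting any vertices that drop below degree two, as in Definition~\ref{def:dehn-diagram}) yields a smaller Dehn diagram $D'$ with one fewer bounded face, whose outer boundary walk $\gamma'$ is obtained from $\gamma$ by replacing each shared arc by the complementary portion of $\partial F$. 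The defining relation for $F$ says the product along $\delta$ is $1$; using this, the product along $\gamma$ equals the product along $\gamma'$, which is $1$ by the inductive hypothesis applied to $D'$.

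The main obstacle is bookkeeping the combinatorics of how the shared edges sit inside $\partial F$ and inside $\gamma$, and making sure the replacement of arcs is consistent with orientations and with the clockwise/counterclockwise ambiguity noted in Definition~\ref{def:dehn-diagram}. In particular one must check that $F$'s boundary cycle relation can be ``spliced into'' $\gamma$ at each shared arc simultaneously, and that after deletion the graph $D'$ is still properly embedded with all vertices of degree $\ge 2$ — degenerate cases (a shared edge that is a bridge, a face meeting the outer boundary in a single vertex rather than an edge, or the whole of $\partial F$ lying on $\gamma$) need separate attention. Once the surgery is set up carefully, the algebraic step is just: (product along $\gamma$) $=$ (product along $\gamma'$) $\cdot$ (conjugates of the $\partial F$-relation, which is $1$), so no real computation is involved.

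Alternatively — and this may be the cleaner route to write up — one can phrase the argument as: the product along $\partial F$ is $1$ for every bounded face, and the plane is simply connected, so the product along the outer boundary is a product of conjugates of the face relations, hence $1$. Concretely, pick a spanning tree $T$ of $D$, choose a basepoint, and for each bounded face $F$ let $r_F$ be the conjugate (by the tree path to $\partial F$) of the boundary word of $F$; a standard fact about planar graphs is that the outer boundary word, read off via $T$, lies in the normal closure of $\{r_F\}$ in the free group on the non-tree edges. Since each $r_F$ maps to $1$ in $G$ (that is the Dehn condition), so does the outer boundary word. I would present the inductive peeling argument as the primary proof since it is the most self-contained and avoids invoking fundamental-group machinery, and the hard part remains the careful verification that an outermost face can always be removed so as to strictly decrease the number of bounded faces while preserving the Dehn condition in $D'$.
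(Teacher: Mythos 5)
The paper states Lemma~\ref{lem:dehn-outer} without proof, attributing it to Dehn as a ``simple but useful observation,'' so there is no argument of the authors' to compare yours against; what you have written is the standard justification, and it is correct in outline. Both of your routes --- the inductive peeling of an outermost bounded face, and the reformulation via the normal closure of the face relations (i.e.\ the easy direction of van Kampen's lemma) --- are the classical proofs. One small structural suggestion: the bookkeeping you flag as the main obstacle (a face meeting the outer boundary in several disjoint arcs, so that the $\partial F$-relation must be spliced in at several places at once) disappears if you induct on the number of \emph{edges} rather than faces, deleting a single non-bridge edge $e$ on the outer boundary at each step. Such an edge exists whenever there is a bounded face (take the outer cycle of an outermost block), its deletion merges exactly one bounded face $F$ with the outer face, and the new outer boundary word is obtained from the old one by replacing the single occurrence of $e^{\pm 1}$ by the complementary portion of $\partial F$, which is legitimate by the $F$-relation; degree-one vertices created along the way are then pruned by your leaf-removal observation, which indeed leaves the boundary product unchanged. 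Also note that under the standing hypothesis that every vertex has degree at least two, the base case ``no bounded faces'' forces the diagram to be empty, so it is genuinely vacuous rather than requiring the tree argument. With that adjustment your sketch closes up into a complete proof; as written it is an honest outline with gaps that are real but routinely fillable.
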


Below we present several group-theoretic results in the spirit of
(multiplicative) Rules of
Three, cf.\ Problem~\ref{problem:group theoretic ROT}.
All the proofs utilize Dehn diagrams.

\begin{proposition}
\label{pr:malcev-cba}
Let $G$ be a group, and let $a,b,c,A,B,C\in G$ satisfy
\begin{equation}
\label{eq:cbaCBA}
aC=Ca, \quad
bB=Bb,\quad
cA=Ac, \quad
baBA=BAba, \quad
cbCB=CBcb.
\end{equation}
Then
$\,cba\,CBA=CBA\,cba$.
\end{proposition}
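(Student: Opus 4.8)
Since every hypothesis in \eqref{eq:cbaCBA} is a commutation relation, the natural strategy (in keeping with Section~\ref{sec:rot-groups}) is to build a Dehn diagram in the sense of Definition~\ref{def:dehn-diagram} with exactly five bounded faces — one realizing each of the five relations in \eqref{eq:cbaCBA} — whose outer boundary spells out the commutator $cba\,CBA\,(cba)^{-1}(CBA)^{-1}$; Lemma~\ref{lem:dehn-outer} would then give the claim. Concretely, this amounts to rewriting the word $cba\cdot CBA$ into $CBA\cdot cba$ using each of the five relations once. The first thing I would note is that the ``obvious'' single-letter commutations one might want ($a\leftrightarrow A$, $a\leftrightarrow B$, $b\leftrightarrow A$, $b\leftrightarrow C$, $c\leftrightarrow B$, $c\leftrightarrow C$) are \emph{not} available: among single letters we only know the ``anti-diagonal'' relations $aC=Ca$, $bB=Bb$, $cA=Ac$. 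Hence a naive ``push one letter at a time'' argument cannot work, and the two length-two relations $baBA=BAba$ and $cbCB=CBcb$ must be used in an essential, ``sheared'' way.

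Here is the chain of identities I would carry out, in order. Apply $aC=Ca$ inside $cba\cdot CBA$ to get $cba\,CBA = cbC\,aBA$. Rewrite $cbCB=CBcb$ as $cbC = CB\,cb\,B^{-1}$ and substitute, obtaining $cbC\,aBA = CB\,(cb\,B^{-1})\,aBA = CB\,c\,(b B^{-1})\,aBA$. Use $bB=Bb$, hence $bB^{-1}=B^{-1}b$, to turn this into $CB\,c\,B^{-1}(baBA)$, and then $baBA=BAba$ gives $CB\,c\,B^{-1}(BAba) = CB\,c\,A\,ba$. A final application of $cA=Ac$ yields $CB\,(cA)\,ba = CB\,(Ac)\,ba = CBA\,cba$, as desired. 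Each of the five hypotheses has been used exactly once, which is why the associated Dehn diagram has precisely five bounded faces (three squares for the single-letter relations, two octagons for the length-two relations) glued along a staircase; planarity is automatic since such a staircase of lenses assembles into a disk.

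\textbf{Main obstacle.} The only real difficulty is discovering this particular sequence — equivalently, the correct planar arrangement of the five faces. Because so few pairwise commutations hold, there is essentially one route through the relations, and it requires splitting $CBA$ as $(CB)\cdot A$ and $cba$ as $c\cdot(ba)$, and temporarily ``parking'' the letters $A$ and $c$ so that the block relations $cbCB=CBcb$ and $baBA=BAba$ can be brought to bear (in sheared form). Once the right split is found the verification is the one-line computation above, and transcribing it into the Dehn diagram whose outer boundary is $cba\,CBA\,a^{-1}b^{-1}c^{-1}A^{-1}B^{-1}C^{-1}$ is routine bookkeeping.
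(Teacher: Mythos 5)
Your proposal is correct: the displayed chain of identities checks out (each of the five hypotheses is used exactly once, in valid sheared form such as $cbC=CB\,cb\,B^{-1}$ and $bB^{-1}=B^{-1}b$), and it is essentially the paper's own argument, since the paper's proof consists of the Dehn diagram of Figure~\ref{fig:malcev-cba}, which is exactly the five-face tiling you describe (three quadrilaterals for $aC=Ca$, $bB=Bb$, $cA=Ac$ and two octagons for the length-two relations). Your explicit rewriting of $cba\,CBA$ into $CBA\,cba$ is precisely the algebraic transcription of that diagram.
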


\begin{proof}
In the Dehn diagram shown in Figure~\ref{fig:malcev-cba},
each bounded face commutes.
Hence so does the outer face, and the claim follows.
\end{proof}

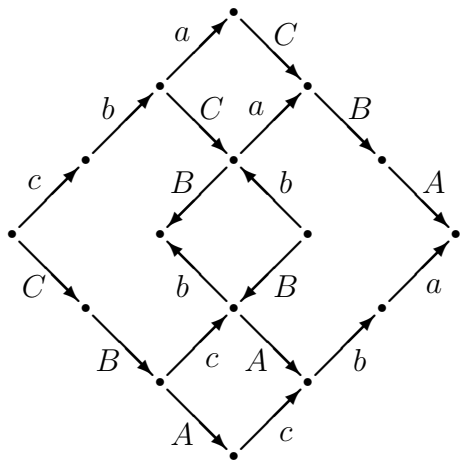
\begin{figure}[ht]
\begin{center}
\setlength{\unitlength}{2.8pt}
\begin{picture}(60,57)(0,-30)

\multiput(0,0)(20,0){4}{\circle*{1}}
\multiput(10,10)(20,0){3}{\circle*{1}}
\multiput(10,-10)(20,0){3}{\circle*{1}}
\multiput(20,20)(20,0){2}{\circle*{1}}
\multiput(20,-20)(20,0){2}{\circle*{1}}
\put( 30,30){\circle*{1}}
\put( 30,-30){\circle*{1}}

\thicklines
\multiput(1,1)(10,10){3}{\vector(1,1){8}}
\multiput(31,-29)(10,10){3}{\vector(1,1){8}}
\multiput(1,-1)(10,-10){3}{\vector(1,-1){8}}
\multiput(31,29)(10,-10){3}{\vector(1,-1){8}}
\multiput(29,-9)(10,10){2}{\vector(-1,1){8}}
\multiput(29,9)(10,-10){2}{\vector(-1,-1){8}}
\put(21,19){\vector(1,-1){8}}
\put(31,-11){\vector(1,-1){8}}
\put(31,11){\vector(1,1){8}}
\put(21,-19){\vector(1,1){8}}

\put(3,7){\makebox(0,0){$c$}}
\put(13,17){\makebox(0,0){$b$}}
\put(23,27){\makebox(0,0){$a$}}

\put(37,27){\makebox(0,0){$C$}}
\put(47,17){\makebox(0,0){$B$}}
\put(57,7){\makebox(0,0){$A$}}

\put(23,7){\makebox(0,0){$B$}}
\put(37,-7){\makebox(0,0){$B$}}
\put(23,-7){\makebox(0,0){$b$}}
\put(37,7){\makebox(0,0){$b$}}

\put(3,-7){\makebox(0,0){$C$}}
\put(13,-17){\makebox(0,0){$B$}}
\put(23,-27){\makebox(0,0){$A$}}

\put(27,17){\makebox(0,0){$C$}}
\put(27,-17){\makebox(0,0){$c$}}

\put(33,17){\makebox(0,0){$a$}}
\put(33,-17){\makebox(0,0){$A$}}

\put(37,-27){\makebox(0,0){$c$}}
\put(47,-17){\makebox(0,0){$b$}}
\put(57,-7){\makebox(0,0){$a$}}

\end{picture}
\end{center}
\caption{\label{fig:malcev-cba}
The proof of Proposition~\ref{pr:malcev-cba}.
}
\end{figure}

Proposition~\ref{pr:malcev-cba} is a special case of the following result.

\begin{theorem}
\label{t pairs commute simple}
Let $G$ be a group, and let the elements
$g_1,\dots,g_N, h_1,\dots, h_N\in G$ satisfy
\begin{alignat}{2}
\label{eq:distant-commute}
g_a h_c &= h_c g_a && \quad \text{for all $a,c\in\{1,\dots, N\}$ with
  $|a-c| \ge 2$};\\
g_a h_a &= h_a g_a  & &\quad
\text{for all $1< a< N$;} \label{eq:gihi=higi}
\\
g_{a+1} g_a h_{a+1} h_a &= h_{a+1} h_a g_{a+1} g_a  & &\quad
\text{for all $1\le a< N$.} \label{eq:two-consec-commute}
\end{alignat}
Then
\begin{equation}
g_N g_{N-1} \cdots g_1\, h_N h_{N-1}\cdots h_1
= h_N h_{N-1} \cdots h_1 \,g_N g_{N-1} \cdots g_1.
\label{eq:gN...g1-commutes-with-hN...h1}
\end{equation}
\end{theorem}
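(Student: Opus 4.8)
The plan is to prove the statement by induction on $N$, with Proposition~\ref{pr:malcev-cba} (the case $N=3$) as the engine. The base cases $N\le 3$ are quick: for $N=2$ the assertion is exactly \eqref{eq:two-consec-commute} with $a=1$, and for $N=3$ one checks that \eqref{eq:distant-commute}--\eqref{eq:two-consec-commute} specialize to the five hypotheses \eqref{eq:cbaCBA} of Proposition~\ref{pr:malcev-cba} under the substitution $(a,b,c,A,B,C)=(g_1,g_2,g_3,h_1,h_2,h_3)$ --- here the relation $g_2h_2=h_2g_2$ comes from \eqref{eq:gihi=higi}, which applies since $1<2<3$ --- so \eqref{eq:gN...g1-commutes-with-hN...h1} follows.

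For $N\ge 4$, assume the theorem for all smaller index sets, and collapse each tuple into three blocks:
\[
g_Ng_{N-1}\cdots g_1=\bar c\,\bar b\,\bar a,\qquad h_Nh_{N-1}\cdots h_1=\bar C\,\bar B\,\bar A,
\]
where $\bar a=g_1$, $\bar b=g_{N-1}\cdots g_2$, $\bar c=g_N$, and likewise $\bar A=h_1$, $\bar B=h_{N-1}\cdots h_2$, $\bar C=h_N$. The goal is then to apply Proposition~\ref{pr:malcev-cba} to the six elements $\bar a,\bar b,\bar c,\bar A,\bar B,\bar C$. Unravelling the five required relations \eqref{eq:cbaCBA}: the relations $\bar a\bar C=\bar C\bar a$ and $\bar c\bar A=\bar A\bar c$ are the instances $g_1h_N=h_Ng_1$ and $g_Nh_1=h_1g_N$ of \eqref{eq:distant-commute} (valid as $|1-N|\ge2$); the relation $\bar b\bar B=\bar B\bar b$ is the conclusion of the theorem for the sub-tuple indexed by $\{2,\dots,N-1\}$; the relation $\bar b\bar a\bar B\bar A=\bar B\bar A\bar b\bar a$ is, since $\bar b\bar a=g_{N-1}\cdots g_1$ and $\bar B\bar A=h_{N-1}\cdots h_1$, the theorem for $\{1,\dots,N-1\}$; and $\bar c\bar b\bar C\bar B=\bar C\bar B\bar c\bar b$ is, since $\bar c\bar b=g_N\cdots g_2$ and $\bar C\bar B=h_N\cdots h_2$, the theorem for $\{2,\dots,N\}$. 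The last three instances involve only $N-1$ or $N-2$ indices, so they hold by the inductive hypothesis; the first two are immediate. Proposition~\ref{pr:malcev-cba} then yields $\bar c\bar b\bar a\,\bar C\bar B\bar A=\bar C\bar B\bar A\,\bar c\bar b\bar a$, which is \eqref{eq:gN...g1-commutes-with-hN...h1}.

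The step requiring the most care --- though it is bookkeeping, not a real obstacle --- is confirming that those three sub-tuples genuinely satisfy the hypotheses of the theorem for their smaller index sets; i.e., that \eqref{eq:distant-commute}--\eqref{eq:two-consec-commute} restrict to an arbitrary sub-interval $[p,q]\subseteq\{1,\dots,N\}$. Relations \eqref{eq:distant-commute} and \eqref{eq:two-consec-commute} obviously restrict, and the point is that the \emph{interior} relations \eqref{eq:gihi=higi} needed for $[p,q]$ --- namely $g_ih_i=h_ig_i$ for $p<i<q$ --- are all on hand because $\{p+1,\dots,q-1\}\subseteq\{2,\dots,N-1\}$. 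This is exactly why \eqref{eq:gihi=higi} is imposed only for the non-endpoint indices of $\{1,\dots,N\}$: the potentially problematic endpoints $g_1,h_1$ and $g_N,h_N$ get peeled off into the singleton blocks $\bar a,\bar A$ and $\bar c,\bar C$, for which \eqref{eq:distant-commute} alone suffices. (One could instead give a direct proof via Lemma~\ref{lem:dehn-outer}, building a Dehn diagram that generalizes Figure~\ref{fig:malcev-cba} by replacing each of the six edges $a,b,c,A,B,C$ with a fan of $N$ parallel edges; the induction above is in effect a recursive assembly of that diagram.)
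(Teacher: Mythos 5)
Your proof is correct, and it takes a genuinely different route from the paper's. The paper proves the theorem in one shot by exhibiting a single explicit Dehn diagram (Figure~\ref{f pairs commute simple}) that directly generalizes the $N=3$ diagram of Figure~\ref{fig:malcev-cba}: its quadrilateral faces encode \eqref{eq:gihi=higi}, its octagons encode \eqref{eq:two-consec-commute}, and its top and bottom regions are tiled by rhombi encoding \eqref{eq:distant-commute}. You instead run an induction on $N$, invoking Proposition~\ref{pr:malcev-cba} as a black box at each step via the three-block collapse $\bar a=g_1$, $\bar b=g_{N-1}\cdots g_2$, $\bar c=g_N$ (and likewise for the $h$'s). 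The two bookkeeping points you flagged are exactly the ones that need checking, and you handled both correctly: (i) the hypotheses restrict to every sub-interval $[p,q]$ because the interior indices $p<a<q$ needed for \eqref{eq:gihi=higi} all lie in $\{2,\dots,N-1\}$; and (ii) the three sub-intervals you invoke, $\{2,\dots,N-1\}$, $\{1,\dots,N-1\}$, $\{2,\dots,N\}$, have lengths $N-2\ge 2$ and $N-1\ge 3$ when $N\ge 4$, so the recursion never reaches a length-one interval, for which the conclusion would not follow from the (vacuous) hypotheses. Your version trades the paper's explicit picture for a recursion; as you observe, unrolling it reassembles a diagram of the same flavor. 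The inductive form spares the reader from verifying a large figure, at the cost of the sub-interval relabeling; the paper's form makes the whole certificate, and its quadratic size, visible at a glance.
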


\begin{proof}
The proof is a direct generalization of the above proof of
Proposition~\ref{pr:malcev-cba}.
It relies on the Dehn diagram shown in Figure~\ref{f pairs commute
  simple}, which illustrates the case $N=6$.
The quadrilateral and octagonal faces of the diagram correspond to
relations~\eqref{eq:gihi=higi}
and~\eqref{eq:two-consec-commute}, respectively.
The bounded faces at the bottom and the top can be tiled by
rhombi corresponding to the
relations~\eqref{eq:distant-commute};
to avoid clutter, these tiles are not shown.
The outer boundary corresponds to~\eqref{eq:gN...g1-commutes-with-hN...h1}.
\end{proof}

\begin{figure}[ht]
\begin{tikzpicture}[xscale = 1,yscale = 0.84]
\tikzstyle{vertex}=[circle, fill, inner sep=1pt, outer sep=2pt]
\tikzstyle{framedvertex}=[inner sep=3pt, outer sep=4pt, draw=gray]
\tikzstyle{circledvertex}=[ellipse, draw, inner sep=1pt, outer sep=3pt, draw=gray]
\tikzstyle{aedge} = [draw, thick, -latex,black, inner sep = 2pt]
\tikzstyle{edge} = [draw, thick, -,black]
\tikzstyle{doubleedge} = [draw, thick, double distance=1pt, -,black]
\tikzstyle{hiddenedge} = [draw=none, thick, double distance=1pt, -,black]
\tikzstyle{dashededge} = [draw, very thick, dashed, black]
\tikzstyle{LabelStyleH} = [text=black, anchor=south]
\tikzstyle{LabelStyleHn} = [text=black, anchor=north]
\tikzstyle{LabelStyleV} = [text=black, anchor=east]
\tikzstyle{LabelStyleNE} = [text=black, inner sep = 0.5pt, anchor=north east]
\tikzstyle{LabelStyleNEn} = [text=black, inner sep = 0.5pt,  anchor=south west]
\tikzstyle{LabelStyleNW} = [text=black, inner sep = 0.5pt, anchor=north west]
\tikzstyle{LabelStyleNWn} = [text=black, inner sep = 0.5pt, anchor=south east]

\node[vertex] (v73) at (7,3){};
\node[vertex] (v62) at (6,2){};
\node[vertex] (v51) at (5,1){};
\node[vertex] (v40) at (4,0){};
\node[vertex] (v3a) at (3,-1){};

\node[vertex] (v2b) at (2,-2){};

\node[vertex] (v1a) at (1,-1){};
\node[vertex] (v00) at (0,0){};
\node[vertex] (va1) at (-1,1){};
\node[vertex] (vb2) at (-2,2){};
\node[vertex] (vc3) at (-3,3){};

\node[vertex] (vd4) at (-4,4){};

\node[vertex] (vc5) at (-3,5){};
\node[vertex] (vb6) at (-2,6){};
\node[vertex] (va7) at (-1,7){};

\node[vertex] (v22) at (2,2){};
\node[vertex] (v33) at (3,3){};
\node[vertex] (v35) at (3,5){};
\node[vertex] (v26) at (2,6){};
\node[vertex] (v08) at (0,8){};
\node[vertex] (v19) at (1,9){};
\node[vertex] (v210) at (2,10){};

\node[vertex] (v39) at (3,9){};
\node[vertex] (v48) at (4,8){};
\node[vertex] (v57) at (5,7){};
\node[vertex] (v66) at (6,6){};
\node[vertex] (v75) at (7,5){};
\node[vertex] (v84) at (8,4){};

\node[vertex] (va5) at (-1,5){};
\node[vertex] (v06) at (0,6){};
\node[vertex] (v15) at (1,5){};
\node[vertex] (v26) at (2,6){};
\node[vertex] (v35) at (3,5){};
\node[vertex] (v46) at (4,6){};
\node[vertex] (v55) at (5,5){};
\node[vertex] (v66) at (6,6){};

\node[vertex] (vb2) at (-2,2){};
\node[vertex] (va3) at (-1,3){};
\node[vertex] (v02) at (0,2){};
\node[vertex] (v13) at (1,3){};
\node[vertex] (v22) at (2,2){};
\node[vertex] (v33) at (3,3){};
\node[vertex] (v42) at (4,2){};
\node[vertex] (v53) at (5,3){};
\node[vertex] (v62) at (6,2){};

\node[vertex] (vb4) at (-2,4){};
\node[vertex] (v04a) at (-0.2,4){};
\node[vertex] (v04b) at (+0.2,4){};
\node[vertex] (v24a) at (1.8,4){};
\node[vertex] (v24b) at (2.2,4){};
\node[vertex] (v44a) at (3.8,4){};
\node[vertex] (v44b) at (4.2,4){};
\node[vertex] (v64) at (6,4){};


\draw[aedge] (vd4) to node[LabelStyleNE]{\footnotesize $h_6$} (vc3);
\draw[aedge] (vc3) to node[LabelStyleNE]{\footnotesize $h_5$} (vb2);
\draw[aedge] (vb2) to node[LabelStyleNE]{\footnotesize $h_4$} (va1);
\draw[aedge] (va1) to node[LabelStyleNE]{\footnotesize $h_3$} (v00);
\draw[aedge] (v00) to node[LabelStyleNE]{\footnotesize $h_2$} (v1a);
\draw[aedge] (v1a) to node[LabelStyleNE]{\footnotesize $h_1$} (v2b);

\draw[aedge] (v2b) to node[LabelStyleNW]{\footnotesize $g_6$} (v3a);
\draw[aedge] (v3a) to node[LabelStyleNW]{\footnotesize $g_5$} (v40);
\draw[aedge] (v40) to node[LabelStyleNW]{\footnotesize $g_4$} (v51);
\draw[aedge] (v51) to node[LabelStyleNW]{\footnotesize $g_3$} (v62);
\draw[aedge] (v62) to node[LabelStyleNW]{\footnotesize $g_2$} (v73);
\draw[aedge] (v73) to node[LabelStyleNW]{\footnotesize $g_1$} (v84);

\draw[aedge] (vd4) to node[LabelStyleNWn]{\footnotesize $g_6$} (vc5);
\draw[aedge] (vc5) to node[LabelStyleNWn]{\footnotesize $g_5$} (vb6);
\draw[aedge] (vb6) to node[LabelStyleNWn]{\footnotesize $g_4$} (va7);
\draw[aedge] (va7) to node[LabelStyleNWn]{\footnotesize $g_3$} (v08);
\draw[aedge] (v08) to node[LabelStyleNWn]{\footnotesize $g_2$} (v19);
\draw[aedge] (v19) to node[LabelStyleNWn]{\footnotesize $g_1$} (v210);

\draw[aedge] (v210) to node[LabelStyleNEn]{\footnotesize $h_6$} (v39);
\draw[aedge] (v39) to node[LabelStyleNEn]{\footnotesize $h_5$} (v48);
\draw[aedge] (v48) to node[LabelStyleNEn]{\footnotesize $h_4$} (v57);
\draw[aedge] (v57) to node[LabelStyleNEn]{\footnotesize $h_3$} (v66);
\draw[aedge] (v66) to node[LabelStyleNEn]{\footnotesize $h_2$} (v75);
\draw[aedge] (v75) to node[LabelStyleNEn]{\footnotesize $h_1$} (v84);


\draw[aedge] (vb2) to node[LabelStyleNW]{\footnotesize \footnotesize $g_6$} (va3);
\draw[aedge] (va3) to node[LabelStyleNE]{\footnotesize $h_4$} (v02);
\draw[aedge] (v02) to node[LabelStyleNW]{\footnotesize $g_5$} (v13);
\draw[aedge] (v13) to node[LabelStyleNE]{\footnotesize $h_3$} (v22);
\draw[aedge] (v22) to node[LabelStyleNW]{\footnotesize $g_4$} (v33);
\draw[aedge] (v33) to node[LabelStyleNE]{\footnotesize $h_2$} (v42);
\draw[aedge] (v42) to node[LabelStyleNW]{\footnotesize $g_3$} (v53);
\draw[aedge] (v53) to node[LabelStyleNE]{\footnotesize $h_1$} (v62);

\draw[aedge] (va3) to node[LabelStyleNE]{\footnotesize $g_5$} (vb4);
\draw[aedge] (v04a) to node[LabelStyleNW]{\footnotesize $h_5$} (va3);
\draw[aedge] (v13) to node[LabelStyleNE]{\footnotesize $g_4$} (v04b);
\draw[aedge] (v24a) to node[LabelStyleNW]{\footnotesize $h_4$} (v13);
\draw[aedge] (v33) to node[LabelStyleNE]{\footnotesize $g_3$} (v24b);
\draw[aedge] (v44a) to node[LabelStyleNW]{\footnotesize $h_3$} (v33);
\draw[aedge] (v53) to node[LabelStyleNE]{\footnotesize $g_2$} (v44b);
\draw[aedge] (v64) to node[LabelStyleNW]{\footnotesize $h_2$} (v53);

\draw[aedge] (va5) to node[LabelStyleNWn]{\footnotesize $h_5$} (vb4);
\draw[aedge] (v04a) to node[LabelStyleNEn]{\footnotesize $g_5$} (va5);
\draw[aedge] (v15) to node[LabelStyleNWn]{\footnotesize $h_4$} (v04b);
\draw[aedge] (v24a) to node[LabelStyleNEn]{\footnotesize $g_4$} (v15);
\draw[aedge] (v35) to node[LabelStyleNWn]{\footnotesize $h_3$} (v24b);
\draw[aedge] (v44a) to node[LabelStyleNEn]{\footnotesize $g_3$} (v35);
\draw[aedge] (v55) to node[LabelStyleNWn]{\footnotesize $h_2$} (v44b);
\draw[aedge] (v64) to node[LabelStyleNEn]{\footnotesize $g_2$} (v55);

\draw[aedge] (vb6) to node[LabelStyleNEn]{\footnotesize $h_6$} (va5);
\draw[aedge] (va5) to node[LabelStyleNWn]{\footnotesize $g_4$} (v06);
\draw[aedge] (v06) to node[LabelStyleNEn]{\footnotesize $h_5$} (v15);
\draw[aedge] (v15) to node[LabelStyleNWn]{\footnotesize $g_3$} (v26);
\draw[aedge] (v26) to node[LabelStyleNEn]{\footnotesize $h_4$} (v35);
\draw[aedge] (v35) to node[LabelStyleNWn]{\footnotesize $g_2$} (v46);
\draw[aedge] (v46) to node[LabelStyleNEn]{\footnotesize $h_3$} (v55);
\draw[aedge] (v55) to node[LabelStyleNWn]{\footnotesize $g_1$} (v66);
\end{tikzpicture}
\caption{\label{f pairs commute simple}
Proof of Theorem~\ref{t pairs commute simple}.}
\end{figure}
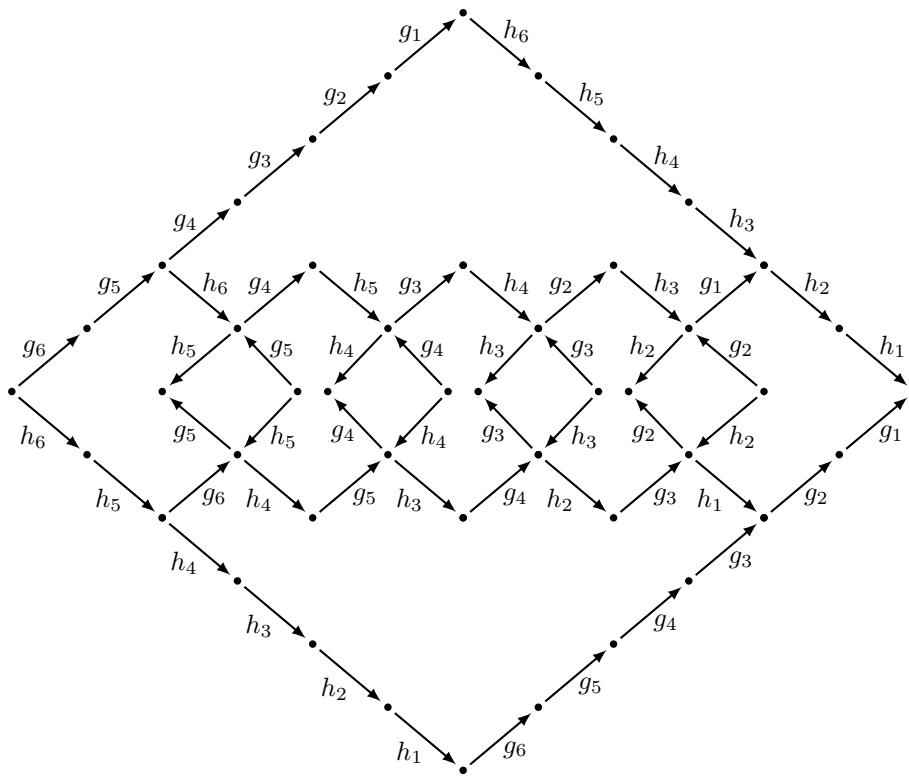

\begin{remark}
The statement of Proposition~\ref{pr:malcev-cba} (or its
generalization, Theorem~\ref{t pairs commute simple})
does not involve inverses.
As such, it readily extends to potentially invertible elements 
$a,b,c,A,B,C$ in a monoid~$M$.
Note however that the assumption of potential invertibility cannot be
dropped: in an arbitrary monoid~$M$,
elements $a,b,c,A,B,C$ satisfying relations~\eqref{eq:cbaCBA}
do not have to satisfy $cbaCBA=CBAcba$.
On the other hand, the latter identity is implied by~\eqref{eq:cbaCBA} under the additional
assumption that~$B$ is potentially invertible.
(There is no need to require anything else.)

%
%
\end{remark}

\begin{remark}
A.~I.~Malcev~\cite{malcev-1, malcev-2} gave an infinite list of
conditions (more precisely, \emph{quasi-identities})
that a monoid~$M$ must satisfy in order to be embeddable into a group.
(Note that embeddability of $M$ into a group is equivalent to
the set of all elements of $M$ being potentially invertible.
Malcev's argument is reproduced in
\cite[Section~VII.3]{cohn-universal};
see also~\cite{lambek} for an alternative perspective.)
Apart from left and right cancellativity,
the simplest of those conditions is the following (cf.\ Figure~\ref{fig:malcev-van-kampen}):
\begin{equation}
\label{eq:malcev}
\forall \, p, q, r, s, P,Q,R,S\in M\quad
((PQ=pq{\ \&\ }
RQ=rq{\ \&\ }
RS=rs) \Rightarrow
PS=ps).
\end{equation}
%
%
It turns out that Theorem~\ref{t pairs commute simple} holds for any
monoid satisfying condition~\eqref{eq:malcev}.
Curiously, neither cancellativity nor other Malcev's conditions are required.

\begin{figure}[ht]
\begin{center}
\setlength{\unitlength}{2.2pt}
\begin{picture}(60,38)(0,-20)

\put( 0,0){\circle*{2}}
\put( 20,0){\circle*{2}}
\put( 40,0){\circle*{2}}
\put( 60,0){\circle*{2}}
\put( 30,20){\circle*{2}}
\put( 30,-20){\circle*{2}}

\thicklines
\put(39,2){\vector(-1,2){8}}
\put(39,-2){\vector(-1,-2){8}}
\put(29,18){\vector(-1,-2){8}}
\put(29,-18){\vector(-1,2){8}}
\put(33,18){\vector(3,-2){24.5}}
\put(33,-18){\vector(3,2){24.5}}
\put(2.3,-1.4){\vector(3,-2){25.6}}
\put(2.3,1.4){\vector(3,2){25.6}}

\put(15,14){\makebox(0,0){$P$}}
\put(15,-14){\makebox(0,0){$p$}}
\put(45,14){\makebox(0,0){$S$}}
\put(45,-14){\makebox(0,0){$s$}}

\put(21,9){\makebox(0,0){$Q$}}
\put(21,-9){\makebox(0,0){$q$}}
\put(39,9){\makebox(0,0){$R$}}
\put(39,-9){\makebox(0,0){$r$}}

\end{picture}
\end{center}
\caption{\label{fig:malcev-van-kampen}
Dehn diagram illustrating Malcev's condition~\eqref{eq:malcev}.
}
\end{figure}
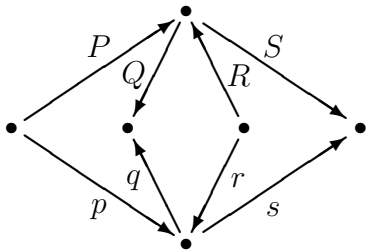
\end{remark}

\begin{theorem}
\label{thm:group-theoretic-gi*hi}
Let $G$ be a group, and let
$g_1, \ldots, g_N, h_1, \ldots, h_N\in G$ be such that
for $1<b<N$, we have:
\begin{equation}
\label{eq:z-commutes-with-gihi}
\text{if $z\in G$ commutes with
$h_b^{-1} g_b$, then $z$ commutes with both $g_b$ and~$h_b$.}
\end{equation}
Then the following are equivalent:
 \begin{itemize}
\item $g_S h_S = h_S g_S$ for all subsets $S\subset\{1,\dots,N\}$;
\item $g_S h_S = h_S g_S$ for all subsets $S$ of cardinality $\le 3$.
\end{itemize}
\end{theorem}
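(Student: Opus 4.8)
The plan is to reduce to the case $S=\{1,\dots,N\}$ and then induct on $N$, realizing the inductive step by a Dehn diagram in the spirit of Figures~\ref{fig:malcev-cba} and~\ref{f pairs commute simple}. The reduction is routine: for a general $S=\{s_1<\cdots<s_m\}$, apply the statement to the length-$m$ sequence $g_{s_1},\dots,g_{s_m},h_{s_1},\dots,h_{s_m}$, whose hypotheses are inherited — its interior indices $s_2,\dots,s_{m-1}$ all lie in $\{2,\dots,N-1\}$, so \eqref{eq:z-commutes-with-gihi} carries over, and the $\le 3$-element commutations carry over trivially. So it is enough to prove $g_N\cdots g_1\, h_N\cdots h_1 = h_N\cdots h_1\, g_N\cdots g_1$, the cases $N\le 3$ being contained in the hypothesis.

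For the inductive step ($N\ge 4$) I would first note that, combining the inductive hypothesis with the reduction above, \emph{every} proper subset $S\subsetneq\{1,\dots,N\}$ satisfies $g_Sh_S=h_Sg_S$; in particular $g_{N-1}\cdots g_1$ commutes with $h_{N-1}\cdots h_1$ and $g_N\cdots g_2$ commutes with $h_N\cdots h_2$. The diagram to be constructed has the sought relation~\eqref{eq:gN...g1-commutes-with-hN...h1} (for our $N$) as its outer boundary, and its bounded faces are of two kinds: large faces realizing the proper-subset commutations just obtained — the two $(N-1)$-element relations forming most of its bulk, much as the quadrilateral and octagonal faces do in Figure~\ref{f pairs commute simple} — together with small square faces of the form ``$z$ commutes with $g_b$'' and ``$z$ commutes with $h_b$'' for interior indices $b$ and suitable partial products $z$ appearing along the seams where the large faces are glued.

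The heart of the matter — and the step I expect to be the main obstacle — is justifying these small square faces, since, unlike in Theorem~\ref{t pairs commute simple}, the distant commutations $g_ah_c=h_cg_a$ are not available. This is exactly where \eqref{eq:z-commutes-with-gihi} is used: for each relevant interior $b$ and its associated $z$, I would first exhibit, as a little sub-diagram built from the $1$-, $2$-, and $3$-element relations involving indices near $b$ (and using $g_bh_b=h_bg_b$ to rewrite $h_b^{-1}g_b=g_bh_b^{-1}$), a face expressing that $z$ commutes with $h_b^{-1}g_b$; condition~\eqref{eq:z-commutes-with-gihi} then promotes this single commutation to commutation of $z$ with both $g_b$ and $h_b$ separately, supplying the required pair of square faces. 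The combinatorial work is to pick the $z$'s correctly and to verify that all the sub-diagrams assemble planarly into one Dehn diagram with the prescribed outer boundary; once this is arranged, Lemma~\ref{lem:dehn-outer} yields the conclusion. (It is plausible that with more care the whole argument can be packaged as a single Dehn diagram valid for all $N$, bypassing the induction, but the inductive formulation keeps the bookkeeping under control.)
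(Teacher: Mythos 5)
Your strategy coincides with the paper's: reduce to $S=\{1,\dots,N\}$, induct on $N$, and realize the inductive step by a Dehn diagram whose large faces are proper-subset commutations and whose small faces assert that some element $z$ commutes separately with $g_b$ and $h_b$ for interior~$b$, the latter pair of commutations being obtained from the single commutation of $z$ with $h_b^{-1}g_b$ via \eqref{eq:z-commutes-with-gihi}. The reduction to $S=\{1,\dots,N\}$ and the identification of where \eqref{eq:z-commutes-with-gihi} must enter are both correct.

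The gap is that the two constructions you defer are precisely the mathematical content of the proof, and neither is routine. First, the element $z$ is not a ``partial product appearing along a seam'': the correct choice is the commutator-type element $z=g_c^{-1}h_ag_ch_a^{-1}$ with $a,c$ the extreme indices of the set under consideration (so $z=g_N^{-1}h_1g_Nh_1^{-1}$ at the top level of the induction), and the fact that this $z$ commutes with $h_b^{-1}g_b$ is relation \eqref{e ab 3vars trick 3 only}, which the paper derives from the seven commutation relations attached to the nonempty subsets of $\{a,b,c\}$ --- namely \eqref{e ab 1vars 3 only}, \eqref{e ab 2vars 3 only}, and \eqref{e ab Nvars 3 only} --- via the sixteen-face Dehn diagram of Figure~\ref{f 3vars} (Lemma~\ref{l pairs commute group only}). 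Note in particular that this sub-diagram must use the full three-element relation for the triple $\{1,b,N\}$, not just relations ``involving indices near~$b$'' as you suggest; until such a $z$ and such a sub-diagram are exhibited, the argument is an assertion rather than a proof. Second, the assembly: once \eqref{eq:z-commutes-with-gihi} upgrades \eqref{e ab 3vars trick 3 only} to \eqref{e ab 3vars trick a}--\eqref{e ab 3vars trick b} for all triples, the paper's Lemma~\ref{lem:decagons-commute} (Figure~\ref{f Nvars}) carries out exactly the induction you describe, gluing the commutations for $\{2,\dots,N\}$, $\{1,\dots,N-1\}$, $\{2,\dots,N-1\}$, and $\{1,N\}$ along a central column of rectangles, one for each of $g_2,\dots,g_{N-1},h_2,\dots,h_{N-1}$, each rectangle recording that $g_N^{-1}h_1g_Nh_1^{-1}$ commutes with that generator. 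So your plan is the right one and matches the paper's, but as written it postpones, rather than supplies, its crucial step.
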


In Section~\ref{sec:proof:u=v}, we show that condition~\eqref{eq:z-commutes-with-gihi} is
satisfied in the setting of Theorem~\ref{t pairs commute v0}.
This enables us to deduce the latter from
Theorem~\ref{thm:group-theoretic-gi*hi}.

The proof of Theorem~\ref{thm:group-theoretic-gi*hi}
relies on two group-theoretic lemmas.

\begin{lemma}
\label{l pairs commute group only}
Let $G$ be a group, and let
$g_a, g_b, g_c, h_a, h_b, h_c\in G$ satisfy
\begin{alignat}{3}
g_ah_a &= h_ag_a,\quad &
g_bh_b &= h_bg_b,\quad &
g_ch_c &= h_cg_c, \label{e ab 1vars 3 only}\\
g_{b}g_ah_{b}h_a &= h_{b}h_ag_{b}g_a,\quad&
g_{c}g_ah_{c}h_a &= h_{c}h_ag_{c}g_a,\quad&
g_{c}g_bh_{c}h_b &= h_{c}h_bg_{c}g_b. \label{e ab 2vars 3 only}
\end{alignat}
Then, if one of the following two relations holds in  $G$, so does the other:
\begin{align}
h_b^{-1}g_b(g_c^{-1}h_ag_ch_a^{-1}) &= (g_c^{-1}h_ag_ch_a^{-1})h_b^{-1}g_b; \label{e ab 3vars trick 3 only} \\
g_c g_{b}g_a h_c h_{b}h_a &= h_c h_{b}h_a g_c g_{b}g_a. \label{e ab Nvars 3 only}
\end{align}
\end{lemma}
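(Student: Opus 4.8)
The plan is to prove the stronger statement that, \emph{modulo} the six hypotheses \eqref{e ab 1vars 3 only}--\eqref{e ab 2vars 3 only}, relation~\eqref{e ab Nvars 3 only} is equivalent to relation~\eqref{e ab 3vars trick 3 only}; I would establish this by transforming one into the other through a reversible chain of rewrites. In keeping with the other proofs of this section, the chain can alternatively be drawn as a planar Dehn diagram whose bounded faces are six of the eight relations involved, so that Lemma~\ref{lem:dehn-outer}, applied after re-rooting at either of the remaining two faces, yields both implications at once (this is what I would ultimately present, in the spirit of Proposition~\ref{pr:malcev-cba} and Theorem~\ref{t pairs commute simple}); but the skeleton is the rewrite below.

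First I would introduce, for each pair $i<j$ in $\{a,b,c\}$, the element $w_{ij}:=g_j^{-1}h_ig_jh_i^{-1}$, and observe that the two-index relation for $\{i,j\}$ in~\eqref{e ab 2vars 3 only}, combined with $g_ih_i=h_ig_i$ and $g_jh_j=h_jg_j$, yields the twisted commutation
\[
g_i\,h_j \;=\; h_j\,w_{ij}\,g_i .
\]
(Starting from $g_jg_ih_jh_i=h_jh_ig_jg_i$, isolate $g_ih_j$ and use $g_ih_i^{-1}=h_i^{-1}g_i$ and $g_j^{-1}h_j=h_jg_j^{-1}$; this is a one-face computation, and $w_{ac}$ is exactly the element $g_c^{-1}h_ag_ch_a^{-1}$ occurring in~\eqref{e ab 3vars trick 3 only}.) Next I would rewrite $g_cg_bg_a\,h_ch_bh_a$ by pushing each $h$ leftwards past each $g$, applying the twisted commutations for the pairs $(a,c)$, $(b,c)$, $(a,b)$ in succession together with $g_ch_c=h_cg_c$ and $g_ah_a=h_ag_a$; this gives
\[
g_cg_bg_a\,h_ch_bh_a \;=\; h_c\,g_c\,w_{bc}\,g_b\,w_{ac}\,h_b\,w_{ab}\,h_a\,g_a ,
\]
so that~\eqref{e ab Nvars 3 only}, after cancelling $h_c$ on the left and $g_a$ on the right, becomes equivalent to the residual identity $g_c\,w_{bc}\,g_b\,w_{ac}\,h_b\,w_{ab}\,h_a = h_b\,h_a\,g_c\,g_b$.

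The final step is to simplify this residual identity using only the definitions of the $w_{ij}$ and the relation $g_bh_b=h_bg_b$: since $g_cw_{bc}=h_bg_ch_b^{-1}$, the left-hand side equals $h_bg_c\,(h_b^{-1}g_b)\,w_{ac}\,h_b\,w_{ab}\,h_a$, whereas since $g_cw_{ac}=h_ag_ch_a^{-1}$ and $g_bw_{ab}h_a=h_ag_b$, the right-hand side equals $h_bg_c\,w_{ac}\,g_b\,w_{ab}\,h_a$. Cancelling the common factors $h_bg_c$ (on the left) and $w_{ab}h_a$ (on the right), and using $g_bh_b=h_bg_b$ once more, the residual identity collapses to $h_b^{-1}g_b\,w_{ac}=w_{ac}\,h_b^{-1}g_b$, which is exactly~\eqref{e ab 3vars trick 3 only}. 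Since every rewrite above is an equivalence modulo \eqref{e ab 1vars 3 only}--\eqref{e ab 2vars 3 only}, either of \eqref{e ab 3vars trick 3 only}, \eqref{e ab Nvars 3 only} implies the other, as claimed. I expect the main obstacle to be purely organizational: keeping track of the conjugations in the first and last steps, and recognizing that the reduction terminates at precisely the rather opaque relation~\eqref{e ab 3vars trick 3 only} with a single application of it. In the diagrammatic reformulation this is the problem of arranging the eight faces (the six hypotheses together with \eqref{e ab 3vars trick 3 only} and \eqref{e ab Nvars 3 only}) into a sphere in which puncturing at either of the last two faces leaves a genuine Dehn diagram; once such a layout is found, the verification is routine.
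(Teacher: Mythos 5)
Your proposal is correct: the twisted commutation $g_ih_j=h_jw_{ij}g_i$ with $w_{ij}=g_j^{-1}h_ig_jh_i^{-1}$, the expansion $g_cg_bg_a\,h_ch_bh_a = h_c\,g_c\,w_{bc}\,g_b\,w_{ac}\,h_b\,w_{ab}\,h_a\,g_a$, and the final collapse to $h_b^{-1}g_b\,w_{ac}=w_{ac}\,h_b^{-1}g_b$ all check out, and since each step is an equivalence modulo \eqref{e ab 1vars 3 only}--\eqref{e ab 2vars 3 only}, both implications follow. This is essentially the paper's own argument: the paper simply exhibits the Dehn diagram of Figure~\ref{f 3vars}, whose bounded faces encode exactly your chain of rewrites, with the $12$-gon in the middle corresponding to~\eqref{e ab 3vars trick 3 only} and the outer face to~\eqref{e ab Nvars 3 only}.
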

\begin{proof}
It suffices to observe that in the Dehn diagram in Figure~\ref{f 3vars},
\begin{itemize}
\item the outer face
corresponds to the relation~\eqref{e ab Nvars 3 only};
\item the $12$-gon in the middle corresponds to the relation~\eqref{e
  ab 3vars trick 3 only};
\item the other bounded faces correspond to relations
\eqref{e ab 1vars 3 only}--\eqref{e ab 2vars 3 only}. \qedhere
\end{itemize}
\end{proof}

\begin{figure}[ht]
\centerfloat
\begin{tikzpicture}[xscale = 2,yscale = 1.5]
\tikzstyle{vertex}=[circle, fill, inner sep=1pt, outer sep=2pt]
\tikzstyle{framedvertex}=[inner sep=3pt, outer sep=4pt, draw=gray]
\tikzstyle{circledvertex}=[ellipse, draw, inner sep=1pt, outer sep=3pt, draw=gray]
\tikzstyle{aedge} = [draw, thick, -latex,black, inner sep = 2pt]
\tikzstyle{edge} = [draw, thick, -,black]
\tikzstyle{doubleedge} = [draw, thick, double distance=1pt, -,black]
\tikzstyle{hiddenedge} = [draw=none, thick, double distance=1pt, -,black]
\tikzstyle{dashededge} = [draw, very thick, dashed, black]
\tikzstyle{LabelStyleH} = [text=black, anchor=south]
\tikzstyle{LabelStyleHn} = [text=black, anchor=north]
\tikzstyle{LabelStyleV} = [text=black, anchor=east]
\tikzstyle{LabelStyleVn} = [text=black, anchor=west]
\tikzstyle{LabelStyleNE} = [text=black, anchor=north east]
\tikzstyle{LabelStyleNEn} = [text=black, anchor=south west]
\tikzstyle{LabelStyleNW} = [text=black, anchor=north west]
\tikzstyle{LabelStyleNWn} = [text=black, anchor=south east]

\node[vertex] (vb0) at (-2,0){};
\node[vertex] (va0) at (-1,0){};
\node[vertex] (v00) at (0,0){};
\node[vertex] (v10) at (1,0){};
\node[vertex] (v20) at (2,0){};
\node[vertex] (vb1) at (-2,1){};
\node[vertex] (v21) at (2,1){};
\node[vertex] (vb2) at (-2,2){};
\node[vertex] (va2) at (-1,2){};
\node[vertex] (v02) at (0,2){};
\node[vertex] (v12) at (1,2){};
\node[vertex] (v22) at (2,2){};
\node[vertex] (vb3) at (-2,3){};
\node[vertex] (va3) at (-1,3){};
\node[vertex] (v13) at (1,3){};
\node[vertex] (v23) at (2,3){};

\node[vertex] (v04) at (0,4.5){};

\node[vertex] (v40) at (2,-1){};
\node[vertex] (v41) at (3.5,-1){};
\node[vertex] (v42) at (3.5,1.5){};
\node[vertex] (vd0) at (-2,-1){};
\node[vertex] (vd1) at (-3.5,-1){};
\node[vertex] (vd2) at (-3.5,1.5){};

\node[vertex] (vc1) at (-3,1){};

\draw[aedge] (v00) to node[LabelStyleH]{\footnotesize $g_c$} (v10);
\draw[aedge] (v20) to node[LabelStyleH]{\footnotesize $h_a$} (v10);
\draw[aedge] (va0) to node[LabelStyleH]{\footnotesize $g_c$} (vb0);
\draw[aedge] (va0) to node[LabelStyleH]{\footnotesize $h_a$} (v00);

\draw[aedge] (v02) to node[LabelStyleH]{\footnotesize $g_c$} (v12);
\draw[aedge] (v22) to node[LabelStyleH]{\footnotesize $h_a$} (v12);
\draw[aedge] (va2) to node[LabelStyleH]{\footnotesize $g_c$} (vb2);
\draw[aedge] (va2) to node[LabelStyleH]{\footnotesize $h_a$} (v02);

\draw[aedge] (v23) to node[LabelStyleH]{\footnotesize $h_a$} (v13);
\draw[aedge] (va3) to node[LabelStyleH]{\footnotesize $g_c$} (vb3);

\draw[aedge] (vb1) to node[LabelStyleV]{\footnotesize $h_b$} (vb0);
\draw[aedge] (v21) to node[LabelStyleV]{\footnotesize $h_b$} (v20);
\draw[aedge] (vb1) to node[LabelStyleV]{\footnotesize $g_b$} (vb2);
\draw[aedge] (v21) to node[LabelStyleV]{\footnotesize $g_b$} (v22);
\draw[aedge] (vb3) to node[LabelStyleV]{\footnotesize $h_c$} (vb2);
\draw[aedge] (va3) to node[LabelStyleV]{\footnotesize $h_c$} (va2);
\draw[aedge] (v12) to node[LabelStyleV]{\footnotesize $g_a$} (v13);
\draw[aedge] (v22) to node[LabelStyleV]{\footnotesize $g_a$} (v23);

\draw[aedge] (vb3) to node[LabelStyleNWn]{\footnotesize $g_a$} (v04);
\draw[aedge] (v04) to node[LabelStyleNEn]{\footnotesize $h_c$} (v23);

\draw[aedge] (v23) to node[LabelStyleNEn]{\footnotesize $h_b$} (v42);
\draw[aedge] (v42) to node[LabelStyleVn]{\footnotesize $h_a$} (v41);
\draw[aedge] (v40) to node[LabelStyleHn]{\footnotesize $g_a$} (v41);
\draw[aedge, bend right=0] (v10) to node[LabelStyleNE]{\footnotesize $g_b$} (v40);

\draw[aedge] (vd2) to node[LabelStyleNWn]{\footnotesize $g_b$} (vb3);
\draw[aedge] (vd1) to node[LabelStyleV]{\footnotesize $g_c$} (vd2);
\draw[aedge] (vd1) to node[LabelStyleHn]{\footnotesize $h_c$} (vd0);
\draw[aedge, bend right=0] (vd0) to node[LabelStyleNW]{\footnotesize $h_b$} (va0);

\draw[aedge] (vb2) to node[LabelStyleNWn]{\footnotesize $h_b$} (vc1);
\draw[aedge] (vb0) to node[LabelStyleNE]{\footnotesize $g_b$} (vc1);

\end{tikzpicture}
\caption{\label{f 3vars}The proof of Lemma \ref{l pairs commute group only}.}
\end{figure}
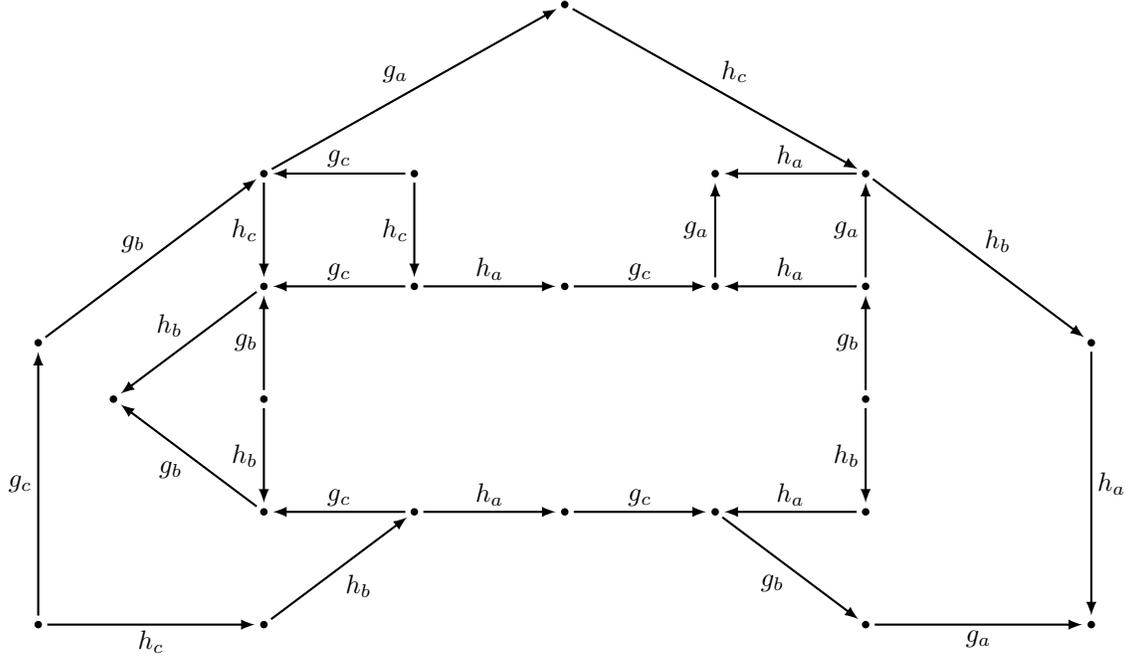

\begin{lemma}
\label{lem:decagons-commute}
Let $G$ be a group, and let
$g_1, \ldots, g_N, h_1, \ldots, h_N\in G$ satisfy
\begin{align}
g_ah_a &= h_ag_a  &&\quad \text{for all $1 \le a \le N$}, \label{e ab 1vars}\\
g_{b}g_ah_{b}h_a &= h_{b}h_ag_{b}g_a  &&\quad \text{for all $1 \leq a < b \leq N$}, \label{e ab 2vars}\\
g_b(g_c^{-1}h_ag_ch_a^{-1}) &= (g_c^{-1}h_ag_ch_a^{-1})g_b && \quad \text{for all $1 \leq a<b<c \leq N$}, \label{e ab 3vars trick a}\\
h_b^{-1}(g_c^{-1}h_ag_ch_a^{-1}) &= (g_c^{-1}h_ag_ch_a^{-1})h_b^{-1}&& \quad \text{for all $1 \leq a<b<c \leq N$}. \label{e ab 3vars trick b}
\end{align}
Then 
\begin{align}
g_N g_{N-1}\cdots g_1 h_N h_{N-1}\cdots h_1 = h_N h_{N-1}\cdots h_1 g_N g_{N-1}\cdots g_1. \label{e ab Nvars}
\end{align}
\end{lemma}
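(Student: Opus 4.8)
The plan is to prove \eqref{e ab Nvars} by induction on $N$; the base case $N=1$ is precisely \eqref{e ab 1vars}. For the inductive step, set $A=g_{N-1}g_{N-2}\cdots g_1$ and $B=h_{N-1}h_{N-2}\cdots h_1$. The tuples $(g_1,\dots,g_{N-1})$ and $(h_1,\dots,h_{N-1})$ inherit all of \eqref{e ab 1vars}--\eqref{e ab 3vars trick b} (these relations are quantified over $a<b<c$ in $\{1,\dots,N\}$, so passing to an initial segment loses nothing), and the induction hypothesis yields $AB=BA$. Since $g_N\cdots g_1=g_NA$ and $h_N\cdots h_1=h_NB$, it remains to prove $g_NA\,h_NB=h_NB\,g_NA$.

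The device I would use is the element $w_a:=g_N^{-1}h_ag_Nh_a^{-1}$ (for $1\le a\le N-1$), which is exactly the element $g_c^{-1}h_ag_ch_a^{-1}$ of \eqref{e ab 3vars trick a}--\eqref{e ab 3vars trick b} specialized to $c=N$. Two conjugation identities govern the argument. The first, $h_ag_Nh_a^{-1}=g_Nw_a$, is merely the definition of $w_a$ rewritten. The second, $h_N^{-1}g_ah_N=w_ag_a$, I would extract from the pair relation \eqref{e ab 2vars} for $\{a,N\}$ together with \eqref{e ab 1vars} applied to $a$ and to $N$: left-multiplying $g_Ng_ah_Nh_a=h_Nh_ag_Ng_a$ by $h_N^{-1}$ and right-multiplying by $h_a^{-1}$, then using $g_Nh_N=h_Ng_N$ and $g_ah_a=h_ag_a$, yields $g_N(h_N^{-1}g_ah_N)=g_Nw_ag_a$, whence the claim.

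Now I would conjugate $g_N$ through $B$ and $h_N$ through $A$. Repeatedly applying $h_bg_Nh_b^{-1}=g_Nw_b$ and using that $h_b$ commutes with $w_a$ for $a<b\le N-1$ (this is \eqref{e ab 3vars trick b} with $c=N$) telescopes to $Bg_NB^{-1}=g_Nw_{N-1}w_{N-2}\cdots w_1$; write $W:=w_{N-1}w_{N-2}\cdots w_1$. Dually, $h_N^{-1}Ah_N=(w_{N-1}g_{N-1})(w_{N-2}g_{N-2})\cdots(w_1g_1)$, and sliding each $g_b$ leftward past every $w_a$ with $a<b$ — which is legitimate by \eqref{e ab 3vars trick a} with $c=N$ — collects the $w_a$'s in front, giving $h_N^{-1}Ah_N=WA$. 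Assembling, and using in turn $Ah_N=h_NWA$, then $AB=BA$, then \eqref{e ab 1vars} for $N$, then $g_NW=Bg_NB^{-1}$:
\[
g_NA\,h_NB \;=\; g_Nh_NWAB \;=\; g_Nh_NWBA \;=\; h_Ng_NWBA \;=\; h_N(Bg_NB^{-1})BA \;=\; h_NB\,g_NA,
\]
which is \eqref{e ab Nvars}.

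The argument above is essentially the algebraic transcription of a Dehn diagram extending Figure~\ref{f 3vars}, whose bounded faces are the quadrilaterals of \eqref{e ab 1vars}, the octagons of \eqref{e ab 2vars}, and the decagons of \eqref{e ab 3vars trick a}--\eqref{e ab 3vars trick b}, and whose outer boundary spells out \eqref{e ab Nvars}; one could equally well present the proof in that form. The step I expect to require the most care is setting up the right auxiliary element and the telescopings cleanly: the real content is the observation that the hypotheses \eqref{e ab 3vars trick a}--\eqref{e ab 3vars trick b}, restricted to $c=N$, are precisely what is needed to push $W$ through both $A$ and $B$, while the remaining instances (with $c\le N-1$) are exactly what the induction hypothesis consumes. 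Everything else is routine group-element bookkeeping.
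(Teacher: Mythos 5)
Your proof is correct; I checked each step. The two conjugation identities are right: $h_ag_Nh_a^{-1}=g_Nw_a$ is the definition of $w_a=g_N^{-1}h_ag_Nh_a^{-1}$ unwound, and $h_N^{-1}g_ah_N=w_ag_a$ follows from \eqref{e ab 2vars} for the pair $\{a,N\}$ together with \eqref{e ab 1vars} exactly as you describe. The telescoping $Bg_NB^{-1}=g_NW$ uses precisely the instances $(a,b,N)$ of \eqref{e ab 3vars trick b}, the collection $h_N^{-1}Ah_N=WA$ uses precisely the instances $(a,b,N)$ of \eqref{e ab 3vars trick a}, and the final chain $g_NAh_NB=g_Nh_NWAB=g_Nh_NWBA=h_Ng_NWBA=h_NBg_NA$ closes the induction.

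This is, however, a genuinely different decomposition from the paper's. The paper also inducts on $N$, but its inductive step (for $N\ge 4$) invokes the induction hypothesis \emph{three} times --- for the index sets $\{2,\dots,N{-}1\}$, $\{1,\dots,N{-}1\}$, and $\{2,\dots,N\}$, giving \eqref{e ab Nm12vars}--\eqref{e ab N2vars} --- and then assembles these with \eqref{e ab 1vars}--\eqref{e ab 3vars trick b} via the Dehn diagram of Figure~\ref{f Nvars}, whose inner rectangles use only the instances $(1,b,N)$ of the three-index relations; the case $N=3$ is handled separately through Lemma~\ref{l pairs commute group only}. Your argument trades these ingredients the other way: it uses the induction hypothesis only once (as $AB=BA$) and instead consumes all instances $(a,b,N)$ of \eqref{e ab 3vars trick a}--\eqref{e ab 3vars trick b}, which is what makes the conjugating element $W=w_{N-1}\cdots w_1$ slide cleanly through both $A$ and $B$. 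A side benefit is that your inductive step works uniformly for all $N\ge 2$, so no separate treatment of $N=3$ is needed. Both proofs are sound; yours is easier to verify line by line, while the paper's diagrammatic version makes the underlying planar combinatorics visible and fits the expository theme of the section.
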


\begin{proof}
Induction on $N$.
The cases  $N = 1$ and $N=2$ are immediate from~\eqref{e ab 1vars}--\eqref{e ab 2vars}.
The case $N=3$ follows from Lemma \ref{l pairs commute group only}
since the relations \eqref{e ab 3vars trick a}--\eqref{e ab 3vars
  trick b} imply~\eqref{e ab 3vars trick 3 only}.

Now assume  $N \geq 4$.
By the inductive hypothesis, the following relations hold:
\begin{align}
g_{N-1}g_{N-2}\cdots g_2h_{N-1}h_{N-2}\cdots h_2 &=
h_{N-1}h_{N-2}\cdots h_2g_{N-1}g_{N-2}\cdots g_2. \label{e ab
  Nm12vars}\\
g_{N-1}g_{N-2}\cdots g_1h_{N-1}h_{N-2}\cdots h_1 &=
h_{N-1}h_{N-2}\cdots h_1g_{N-1}g_{N-2}\cdots g_1. \label{e ab Nm1vars}
\\
g_Ng_{N-1}\cdots g_2h_Nh_{N-1}\cdots h_2 &= h_Nh_{N-1}\cdots h_2g_Ng_{N-1}\cdots g_2. \label{e ab N2vars}
\end{align}
It remains to verify that the relations
\eqref{e ab 1vars}--\eqref{e ab 3vars trick b}
and \eqref{e ab Nm12vars}--\eqref{e ab N2vars} imply~\eqref{e ab Nvars}.
The Dehn diagram in Figure \ref{f Nvars} illustrates the argument in the
 case $N=4$.
In the diagram,
\begin{itemize}
\item
the leftmost bounded face corresponds to the relation~\eqref{e ab
  N2vars};
\item
the rightmost bounded face corresponds to the relation~\eqref{e ab
  Nm1vars};
\item
the octagonal face on the left corresponds to the relation~\eqref{e ab
  Nm12vars};
\item
the octagonal face at the top corresponds to the relation~\eqref{e ab
  2vars};
\item
the two quadrilateral faces correspond to the relation~\eqref{e ab
  1vars};
\item
the four inner
rectangles correspond to the relations~\eqref{e ab 3vars trick
  a}--\eqref{e ab 3vars trick b};
\item
and the outer face corresponds to~\eqref{e ab Nvars}.
\end{itemize}
The general case is similar,
with  $N$ rectangles in the center.
\end{proof}

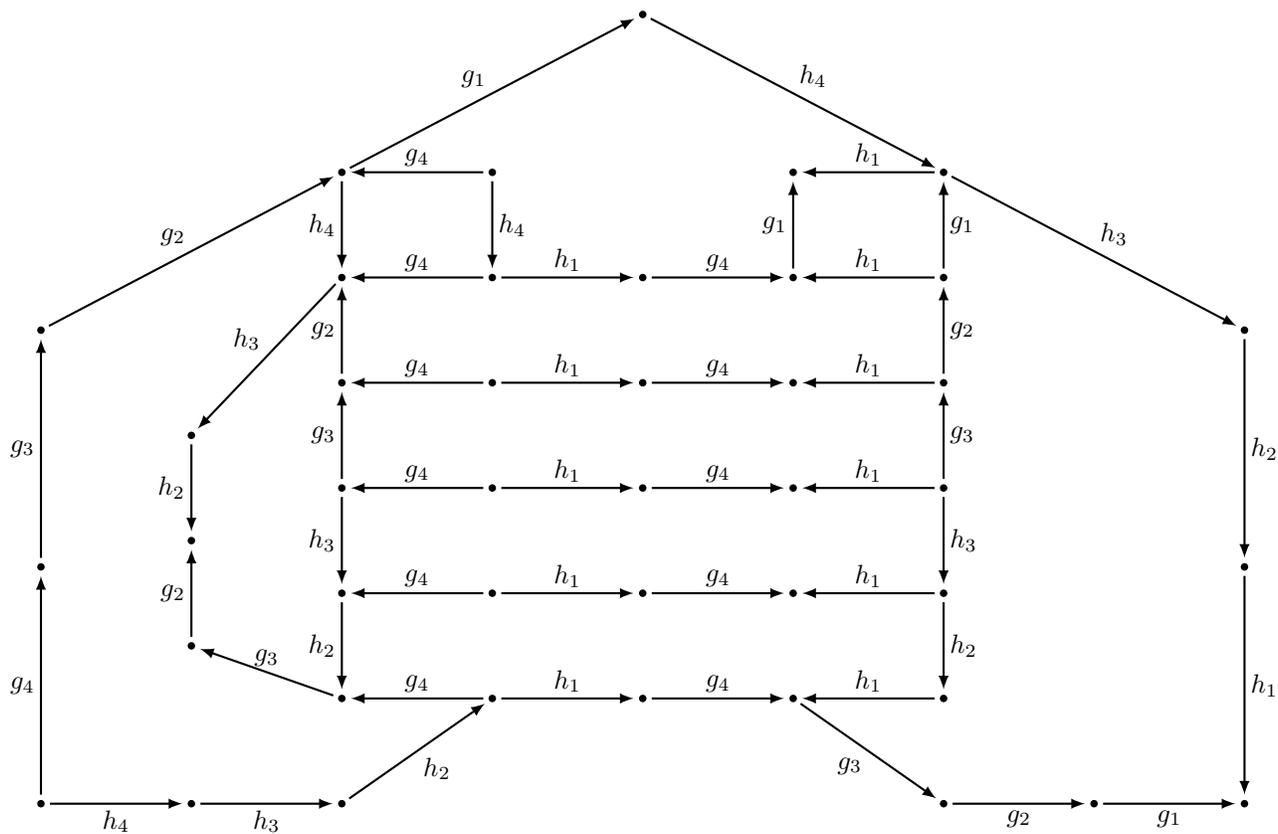
\begin{figure}[ht]
\centerfloat
\begin{tikzpicture}[xscale = 2,yscale = 1.4]
\tikzstyle{vertex}=[circle, fill, inner sep=1pt, outer sep=2pt]
\tikzstyle{framedvertex}=[inner sep=3pt, outer sep=4pt, draw=gray]
\tikzstyle{circledvertex}=[ellipse, draw, inner sep=1pt, outer sep=3pt, draw=gray]
\tikzstyle{aedge} = [draw, thick, -latex,black, inner sep = 2pt]
\tikzstyle{edge} = [draw, thick, -,black]
\tikzstyle{doubleedge} = [draw, thick, double distance=1pt, -,black]
\tikzstyle{hiddenedge} = [draw=none, thick, double distance=1pt, -,black]
\tikzstyle{dashededge} = [draw, very thick, dashed, black]
\tikzstyle{LabelStyleH} = [text=black, anchor=south, label distance = 10pt]
\tikzstyle{LabelStyleHn} = [text=black, anchor=north, label distance = 0pt]
\tikzstyle{LabelStyleV} = [text=black, anchor=east]
\tikzstyle{LabelStyleVn} = [text=black, anchor=west]
\tikzstyle{LabelStyleNE} = [text=black, anchor=north east]
\tikzstyle{LabelStyleNEn} = [text=black, anchor=south west]
\tikzstyle{LabelStyleNW} = [text=black, anchor=north west]
\tikzstyle{LabelStyleNWn} = [text=black, anchor=south east]

\node[vertex] (vb0) at (-2,0){};
\node[vertex] (va0) at (-1,0){};
\node[vertex] (v00) at (0,0){};
\node[vertex] (v10) at (1,0){};
\node[vertex] (v20) at (2,0){};

\node[vertex] (vb1) at (-2,1){};
\node[vertex] (va1) at (-1,1){};
\node[vertex] (v01) at (0,1){};
\node[vertex] (v11) at (1,1){};
\node[vertex] (v21) at (2,1){};

\node[vertex] (vb2) at (-2,2){};
\node[vertex] (va2) at (-1,2){};
\node[vertex] (v02) at (0,2){};
\node[vertex] (v12) at (1,2){};
\node[vertex] (v22) at (2,2){};

\node[vertex] (vb3) at (-2,3){};
\node[vertex] (va3) at (-1,3){};
\node[vertex] (v03) at (0,3){};
\node[vertex] (v13) at (1,3){};
\node[vertex] (v23) at (2,3){};

\node[vertex] (vb4) at (-2,4){};
\node[vertex] (va4) at (-1,4){};
\node[vertex] (v04) at (0,4){};
\node[vertex] (v14) at (1,4){};
\node[vertex] (v24) at (2,4){};

\node[vertex] (vb5) at (-2,5){};
\node[vertex] (va5) at (-1,5){};
\node[vertex] (v15) at (1,5){};
\node[vertex] (v25) at (2,5){};

\node[vertex] (v06) at (0,6.5){};

\node[vertex] (v40) at (2,-1){};
\node[vertex] (v41) at (3,-1){};
\node[vertex] (v42) at (4,-1){};
\node[vertex] (v43) at (4,1.25){};
\node[vertex] (v44) at (4,3.5){};
\node[vertex] (vd0) at (-2,-1){};
\node[vertex] (vd1) at (-3,-1){};
\node[vertex] (vd2) at (-4,-1){};
\node[vertex] (vd3) at (-4,1.25){};
\node[vertex] (vd4) at (-4,3.5){};

\node[vertex] (vc1) at (-3,.5){};
\node[vertex] (vc2) at (-3,1.5){};
\node[vertex] (vc3) at (-3,2.5){};

\draw[aedge] (v00) to node[LabelStyleH]{\footnotesize $g_4$} (v10);
\draw[aedge] (v20) to node[LabelStyleH]{\footnotesize $h_1$} (v10);
\draw[aedge] (va0) to node[LabelStyleH]{\footnotesize $g_4$} (vb0);
\draw[aedge] (va0) to node[LabelStyleH]{\footnotesize $h_1$} (v00);

\draw[aedge] (v01) to node[LabelStyleH]{\footnotesize $g_4$} (v11);
\draw[aedge] (v21) to node[LabelStyleH]{\footnotesize $h_1$} (v11);
\draw[aedge] (va1) to node[LabelStyleH]{\footnotesize $g_4$} (vb1);
\draw[aedge] (va1) to node[LabelStyleH]{\footnotesize $h_1$} (v01);

\draw[aedge] (v02) to node[LabelStyleH]{\footnotesize $g_4$} (v12);
\draw[aedge] (v22) to node[LabelStyleH]{\footnotesize $h_1$} (v12);
\draw[aedge] (va2) to node[LabelStyleH]{\footnotesize $g_4$} (vb2);
\draw[aedge] (va2) to node[LabelStyleH]{\footnotesize $h_1$} (v02);

\draw[aedge] (v03) to node[LabelStyleH]{\footnotesize $g_4$} (v13);
\draw[aedge] (v23) to node[LabelStyleH]{\footnotesize $h_1$} (v13);
\draw[aedge] (va3) to node[LabelStyleH]{\footnotesize $g_4$} (vb3);
\draw[aedge] (va3) to node[LabelStyleH]{\footnotesize $h_1$} (v03);

\draw[aedge] (v04) to node[LabelStyleH]{\footnotesize $g_4$} (v14);
\draw[aedge] (v24) to node[LabelStyleH]{\footnotesize $h_1$} (v14);
\draw[aedge] (va4) to node[LabelStyleH]{\footnotesize $g_4$} (vb4);
\draw[aedge] (va4) to node[LabelStyleH]{\footnotesize $h_1$} (v04);

\draw[aedge] (v25) to node[LabelStyleH]{\footnotesize $h_1$} (v15);
\draw[aedge] (va5) to node[LabelStyleH]{\footnotesize $g_4$} (vb5);

\draw[aedge] (vb1) to node[LabelStyleV]{\footnotesize $h_2$} (vb0);
\draw[aedge] (v21) to node[LabelStyleVn]{\footnotesize $h_2$} (v20);

\draw[aedge] (vb2) to node[LabelStyleV]{\footnotesize $h_3$} (vb1);
\draw[aedge] (v22) to node[LabelStyleVn]{\footnotesize $h_3$} (v21);

\draw[aedge] (vb2) to node[LabelStyleV]{\footnotesize $g_3$} (vb3);
\draw[aedge] (v22) to node[LabelStyleVn]{\footnotesize $g_3$} (v23);

\draw[aedge] (vb3) to node[LabelStyleV]{\footnotesize $g_2$} (vb4);
\draw[aedge] (v23) to node[LabelStyleVn]{\footnotesize $g_2$} (v24);

\draw[aedge] (vb5) to node[LabelStyleV]{\footnotesize $h_4$} (vb4);
\draw[aedge] (va5) to node[LabelStyleVn]{\footnotesize $h_4$} (va4);
\draw[aedge] (v14) to node[LabelStyleV]{\footnotesize $g_1$} (v15);
\draw[aedge] (v24) to node[LabelStyleVn]{\footnotesize $g_1$} (v25);

\draw[aedge] (vb5) to node[LabelStyleNWn]{\footnotesize $g_1$} (v06);
\draw[aedge] (v06) to node[LabelStyleNEn]{\footnotesize $h_4$} (v25);

\draw[aedge] (v25) to node[LabelStyleNEn]{\footnotesize $h_3$} (v44);
\draw[aedge] (v44) to node[LabelStyleVn]{\footnotesize $h_2$} (v43);
\draw[aedge] (v43) to node[LabelStyleVn]{\footnotesize $h_1$} (v42);
\draw[aedge] (v41) to node[LabelStyleHn]{\footnotesize $g_1$} (v42);
\draw[aedge] (v40) to node[LabelStyleHn]{\footnotesize $g_2$} (v41);
\draw[aedge, bend right=0] (v10) to node[LabelStyleNE]{\footnotesize $g_3$} (v40);

\draw[aedge] (vd4) to node[LabelStyleNWn]{\footnotesize $g_2$} (vb5);
\draw[aedge] (vd3) to node[LabelStyleV]{\footnotesize $g_3$} (vd4);
\draw[aedge] (vd2) to node[LabelStyleV]{\footnotesize $g_4$} (vd3);
\draw[aedge] (vd2) to node[LabelStyleHn]{\footnotesize $h_4$} (vd1);
\draw[aedge] (vd1) to node[LabelStyleHn]{\footnotesize $h_3$} (vd0);
\draw[aedge, bend right=0] (vd0) to node[LabelStyleNW]{\footnotesize $h_2$} (va0);

\draw[aedge] (vb4) to node[LabelStyleNWn]{\footnotesize $h_3$} (vc3);
\draw[aedge] (vc3) to node[LabelStyleV]{\footnotesize $h_2$} (vc2);
\draw[aedge] (vc1) to node[LabelStyleV]{\footnotesize $g_2$} (vc2);
\draw[aedge] (vb0) to node[LabelStyleH]{\footnotesize $g_3$} (vc1);

\end{tikzpicture}
\caption{\label{f Nvars}The proof of Lemma~\ref{lem:decagons-commute}.}
\end{figure}

\begin{proof}[Proof of Theorem~\ref{thm:group-theoretic-gi*hi}]
For $1\le a<b<c\le N$,
conditions of Theorem~\ref{thm:group-theoretic-gi*hi} include
\eqref{e ab 1vars 3 only}, \eqref{e ab 2vars 3 only}, and~\eqref{e ab
  Nvars 3 only}.
By Lemma~\ref{l pairs commute group only}, relation~\eqref{e ab 3vars
  trick 3 only} follows.
In view of condition \eqref{eq:z-commutes-with-gihi}, we then obtain
\eqref{e ab 3vars trick a}--\eqref{e ab 3vars trick b}.
Hence Lemma~\ref{lem:decagons-commute} applies, yielding~\eqref{e ab
  Nvars}, as desired.
\end{proof}

We conclude this section by slightly strengthening
Theorem~\ref{thm:group-theoretic-gi*hi},
see Corollary~\ref{cor:group-theoretic-gi*hi} below.
This will require the following immediate consequence of Lemma~\ref{l
  pairs commute group only}.

\begin{lemma}
\label{l case gb = hb}
Let $G$ be a group, and let
$g_a, g_b, g_c, h_a, h_b, h_c\in G$ satisfy
\begin{alignat*}{3}
g_ah_a &= h_ag_a,\quad &
g_b &= h_b,\quad &
g_ch_c &= h_cg_c, \\
g_{b}g_ah_{b}h_a &= h_{b}h_ag_{b}g_a,\quad&
g_{c}g_ah_{c}h_a &= h_{c}h_ag_{c}g_a,\quad&
g_{c}g_bh_{c}h_b &= h_{c}h_bg_{c}g_b.
\end{alignat*}
Then~\,$g_cg_bg_ah_ch_bh_a = h_ch_bh_ag_cg_bg_a.$
\end{lemma}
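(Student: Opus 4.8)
The plan is to derive this statement as an immediate corollary of Lemma~\ref{l pairs commute group only}. First I would check that the hypotheses of that lemma are in force: relations~\eqref{e ab 1vars 3 only} hold because $g_b=h_b$ forces $g_bh_b=h_bg_b$ trivially (and the $a$- and $c$-commutations $g_ah_a=h_ag_a$, $g_ch_c=h_cg_c$ are assumed outright), while relations~\eqref{e ab 2vars 3 only} are assumed verbatim. So Lemma~\ref{l pairs commute group only} applies, and it asserts that the two relations~\eqref{e ab 3vars trick 3 only} and~\eqref{e ab Nvars 3 only} are equivalent in $G$.

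Next I would observe that the conclusion we seek, $g_cg_bg_ah_ch_bh_a=h_ch_bh_ag_cg_bg_a$, is precisely relation~\eqref{e ab Nvars 3 only}. Hence it suffices to verify relation~\eqref{e ab 3vars trick 3 only}, that is, that $h_b^{-1}g_b$ commutes with $g_c^{-1}h_ag_ch_a^{-1}$. But under the hypothesis $g_b=h_b$ we have $h_b^{-1}g_b=1$, the identity element of $G$, which commutes with every element of $G$; in particular it commutes with $g_c^{-1}h_ag_ch_a^{-1}$. Therefore~\eqref{e ab 3vars trick 3 only} holds, and invoking Lemma~\ref{l pairs commute group only} yields~\eqref{e ab Nvars 3 only}, as desired.

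There is essentially no obstacle here: the whole content is the observation that setting $g_b=h_b$ renders the auxiliary ``trick'' relation~\eqref{e ab 3vars trick 3 only} vacuous, so the equivalence supplied by Lemma~\ref{l pairs commute group only} collapses to the statement we want. Equivalently, one could read the result directly off the Dehn diagram in Figure~\ref{f 3vars}: the central $12$-gon, which encodes~\eqref{e ab 3vars trick 3 only}, degenerates (its boundary word becomes trivial) once $g_b=h_b$, so every bounded face of the diagram commutes, and hence so does the outer face, which is exactly~\eqref{e ab Nvars 3 only}.
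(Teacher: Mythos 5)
Your proof is correct and is exactly the argument the paper intends: the paper introduces Lemma~\ref{l case gb = hb} as an ``immediate consequence of Lemma~\ref{l pairs commute group only},'' and the content of that consequence is precisely your observation that $g_b=h_b$ makes $h_b^{-1}g_b=1$, so relation~\eqref{e ab 3vars trick 3 only} holds trivially and the equivalence in that lemma delivers~\eqref{e ab Nvars 3 only}.
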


\begin{corollary}
\label{cor:group-theoretic-gi*hi}
Let $G$ be a group.
Let $g_1, \ldots, g_N, h_1, \ldots, h_N\in G$ be such that
for each $1<b<N$, either $g_b=h_b$ or condition
\eqref{eq:z-commutes-with-gihi} holds.
Then the following are equivalent:
 \begin{itemize}
\item $g_S h_S = h_S g_S$ for all subsets $S\subset\{1,\dots,N\}$;
\item $g_S h_S = h_S g_S$ for all subsets $S$ of cardinality $\le 3$.
\end{itemize}
\end{corollary}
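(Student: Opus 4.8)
The plan is to reduce the corollary to a common generalization of Lemmas~\ref{lem:decagons-commute} and~\ref{l case gb = hb}, and to prove that generalization by adapting the Dehn-diagram induction of Lemma~\ref{lem:decagons-commute}. The forward implication is trivial, so I would assume $g_Sh_S=h_Sg_S$ for all $|S|\le 3$. Exactly as in the proof of Theorem~\ref{thm:group-theoretic-gi*hi}, it suffices to establish the case $S=\{1,\dots,N\}$, i.e.\ relation~\eqref{e ab Nvars}: for a general $S=\{s_1<\cdots<s_m\}$ one passes to $p_i=g_{s_i}$, $q_i=h_{s_i}$, notes that the hypotheses are inherited (the condition at an interior index $s_i$ — either $g_{s_i}=h_{s_i}$, or \eqref{eq:z-commutes-with-gihi} — refers only to $g_{s_i}$, $h_{s_i}$, $G$, and $1<i<m$ forces $1<s_i<N$), and applies the case $\{1,\dots,m\}$.

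Next I would run the argument of Theorem~\ref{thm:group-theoretic-gi*hi}. The $|S|\le 3$ relations give \eqref{e ab 1vars}, \eqref{e ab 2vars}, and, for each $1\le a<b<c\le N$, the relations \eqref{e ab 1vars 3 only}, \eqref{e ab 2vars 3 only}, \eqref{e ab Nvars 3 only}; so Lemma~\ref{l pairs commute group only} yields \eqref{e ab 3vars trick 3 only}, i.e.\ $w_{a,c}:=g_c^{-1}h_ag_ch_a^{-1}$ commutes with $h_b^{-1}g_b$. Fix $b$ with $1<b<N$. If \eqref{eq:z-commutes-with-gihi} holds at $b$, then $w_{a,c}$ commutes with $g_b$ and $h_b$, i.e.\ \eqref{e ab 3vars trick a}--\eqref{e ab 3vars trick b} hold for all $a<b<c$, exactly as before. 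If instead $g_b=h_b$, then \eqref{e ab 3vars trick 3 only} holds trivially (its left side involves $h_b^{-1}g_b=1$) but gives no information about $g_b$, so \eqref{e ab 3vars trick a}--\eqref{e ab 3vars trick b} are unavailable; however, Lemma~\ref{l case gb = hb} now applies to every triple $(a,b,c)$ and gives $g_cg_bg_ah_ch_bh_a=h_ch_bh_ag_cg_bg_a$. This reduces the corollary to the following hybrid statement: \emph{assuming \eqref{e ab 1vars} and \eqref{e ab 2vars}, and that each $1<b<N$ either satisfies \eqref{e ab 3vars trick a}--\eqref{e ab 3vars trick b} for all $a<b<c$ or satisfies $g_b=h_b$, relation~\eqref{e ab Nvars} holds.}

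I would prove this hybrid statement by induction on $N$, in parallel with Lemma~\ref{lem:decagons-commute}. If no interior index has $g_b=h_b$ it is Lemma~\ref{lem:decagons-commute} itself; the cases $N\le 3$ follow from \eqref{e ab 1vars}, \eqref{e ab 2vars}, Lemma~\ref{l pairs commute group only}, and Lemma~\ref{l case gb = hb}. For $N\ge 4$, as in Figure~\ref{f Nvars}, one reduces \eqref{e ab Nvars} to the three smaller instances on $\{2,\dots,N-1\}$, $\{1,\dots,N-1\}$, $\{2,\dots,N\}$ (available by induction, since the hybrid hypothesis is inherited by these subsystems), glued along \eqref{e ab 1vars}--\eqref{e ab 2vars} for the pair $\{1,N\}$ together with, for each interior $b$, a connective region built from the data at $b$: for a ``good'' $b$, the two rectangular faces of Figure~\ref{f Nvars} coming from \eqref{e ab 3vars trick a}--\eqref{e ab 3vars trick b} with $(a,c)=(1,N)$; for a ``bad'' $b$, a single face encoding $g_Ng_bg_1h_Nh_bh_1=h_Nh_bh_1g_Ng_bg_1$.

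The hard part will be this last surgery — checking that replacing, at each bad column, the two-rectangle ``tube'' through the edges labeled $g_b$ and $h_b$ by the single $g_Ng_bg_1/h_Nh_bh_1$ face produces an honest planar Dehn diagram with the \emph{same} outer boundary, so that Lemma~\ref{lem:dehn-outer} still delivers \eqref{e ab Nvars}. (Equivalently: redraw the diagram of Lemma~\ref{lem:decagons-commute} so that a bad column contributes one such face instead of two rectangles.) I expect this to be a purely combinatorial planarity-and-bookkeeping check of the same flavor as the figures already present, after which the corollary follows at once.
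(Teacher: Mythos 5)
Your reduction is sound up to the last step: the forward direction, the passage to $S=\{1,\dots,N\}$, the derivation of \eqref{e ab 3vars trick a}--\eqref{e ab 3vars trick b} at ``good'' interior indices via Lemma~\ref{l pairs commute group only} and \eqref{eq:z-commutes-with-gihi}, and the observation that Lemma~\ref{l case gb = hb} supplies the triple relations through a ``bad'' index are all correct, and your hybrid statement is true. The gap is the surgery itself, and it is not mere bookkeeping. In the central strip of Figure~\ref{f Nvars} the side labels read, bottom to top, $h_2,h_3,\dots,h_{N-1},g_{N-1},\dots,g_2$, so the two decagons attached to a given $b$ are in general \emph{not} adjacent (for $b=2$ they sit at opposite ends of the strip, separated by the faces for every other index); there is no ``two-rectangle tube'' to excise. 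Moreover the face you propose to insert cannot do the work of the two you remove: the strip needs the element $w=g_N^{-1}h_1g_Nh_1^{-1}$ to commute with $g_b$ and with $h_b$ \emph{separately}, because $h_b^{-1}$ and $g_b$ occur at different positions of the strip's boundary word, whereas by Lemma~\ref{l pairs commute group only} the triple relation for $\{1,b,N\}$ is equivalent (given the $|S|\le 2$ relations) to $w$ commuting with $h_b^{-1}g_b=1$ --- it is vacuous at a bad index. (Its boundary labels $g_1,h_N$ do not even match the $g_N,h_1$ edges bounding the excised region.) Reordering the strip so that the $g_b$ and $h_b$ faces become adjacent changes its left and right boundary words and breaks the interface with the three large faces encoding the relations for $\{2,\dots,N-1\}$, $\{1,\dots,N-1\}$, $\{2,\dots,N\}$. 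So the inductive step, as proposed, does not go through.

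The paper closes the induction at a different scale, and no diagram modification is needed. In the step from $N-1$ to $N$: if some interior $b$ has $g_b=h_b$, apply Lemma~\ref{l case gb = hb} to the composite blocks $g_a=g_{b-1}\cdots g_1$, $g_c=g_N\cdots g_{b+1}$, $h_a=h_{b-1}\cdots h_1$, $h_c=h_N\cdots h_{b+1}$, keeping $g_b=h_b$ as the middle pair; every hypothesis of that lemma for these blocks is an instance of $g_Sh_S=h_Sg_S$ for a proper subset $S\subsetneq\{1,\dots,N\}$, hence is available by the induction hypothesis, and its conclusion is exactly \eqref{e ab Nvars}. If no interior index is bad, quote Theorem~\ref{thm:group-theoretic-gi*hi}. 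I recommend replacing the surgery with this block decomposition; it turns your final paragraph into three lines.
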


\begin{proof}
Induction on~$N$.
For $N \le 3$, the result is clear. Now assume $N \ge 4$.
If $g_b=h_b$ for some~$b\in\{2,\dots,N-1\}$, then the claim
\[
g_N\cdots g_1 h_N\cdots h_1 = h_N\cdots h_1 g_N\cdots g_1
\]
follows by combining the induction assumption with Lemma~\ref{l case gb
  = hb}, for
$g_a = g_{b-1}\cdots g_1$, $g_c = g_{N}\cdots g_{b+1}$,
$h_a = h_{b-1}\cdots h_1$, $h_c = h_{N}\cdots h_{b+1}$.
In the only remaining case, condition~\eqref{eq:z-commutes-with-gihi}
holds for all~$b\in\{2,\dots,N-1\}$, and the claim follows from
Theorem~\ref{thm:group-theoretic-gi*hi}.
\end{proof}

\pagebreak[3]

\section{Proofs of Theorems \ref{t pairs commute v0} and \ref{th:mrot-linear-uu}}
\label{sec:proof:u=v}

We obtain Theorem \ref{t pairs commute v0} by combining
Theorem~\ref{thm:group-theoretic-gi*hi} with
Lemma~\ref{lem:lagrange-implicit} below.
While Theorem~\ref{thm:group-theoretic-gi*hi} is purely
group-theoretic,
the proof of Lemma~\ref{lem:lagrange-implicit}
implicitly relies on a Lagrange inversion argument for formal power series.

As before, we denote by $R[[x,y]]$ the
ring of formal power series in $x$ and~$y$, with
coefficients in the (unital) $\QQ$-algebra~$R$.
We use the notation $R[[x,y]]^*$ for the multiplicative subgroup of
$R[[x,y]]$ formed by power series with constant term~$1$.
Thus the elements of $R[[x,y]]$ are formal expressions
$z=\sum_{k=0}^\infty z_k$, with each $z_k\in R[x,y]$ a
homogeneous polynomial of degree~$k$ in $x$ and~$y$, with coefficients
in~$R$. (From now on, we adopt the convention $\deg(x)\!=\!\deg(y)\!=\!1$.)
We have $z\!\in\! R[[x,y]]^*$ if and only if $z_0=1\in R$.


\begin{lemma}
\label{lem:division-implicit}
Let $q\in\QQ[[x,y]]$ and $r\in R[[x,y]]$, with $q\neq 0$ and $r\neq 0$.
Then $qr\neq 0$.
\end{lemma}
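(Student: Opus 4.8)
The plan is to reduce to lowest-degree homogeneous components and then invoke the fact that $\QQ[x,y]$ is an integral domain. I would write $q=\sum_{k\ge 0} q_k$ and $r=\sum_{k\ge 0} r_k$, where $q_k\in\QQ[x,y]$ and $r_k\in R[x,y]$ are the homogeneous components of degree~$k$ (with $\deg(x)=\deg(y)=1$, as in the convention introduced just above). Since $q\neq 0$ and $r\neq 0$, there are smallest indices $e$ and~$d$ with $q_e\neq 0$ and $r_d\neq 0$. Because $x$ and~$y$ are central, a one-line computation shows that in $qr=\sum_n\big(\sum_{j+k=n}q_j r_k\big)$ every term of degree $n<d+e$ vanishes and the degree-$(d+e)$ component is exactly $q_e r_d$. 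Hence it suffices to prove that $q_e r_d\neq 0$ in $R[x,y]$, and then $qr\neq 0$ is immediate.

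So the crux reduces to the following elementary claim: for any nonzero $f\in\QQ[x,y]$ and any nonzero $g\in R[x,y]$ (not necessarily homogeneous), one has $fg\neq 0$. To prove this I would use that $R$, being a $\QQ$-algebra, is in particular a $\QQ$-vector space, so there is a $\QQ$-linear functional $\phi\colon R\to\QQ$ that does not vanish on some coefficient of~$g$. Extending $\phi$ coefficientwise to a $\QQ[x,y]$-linear map $R[x,y]\to\QQ[x,y]$, one gets $\phi(g)\neq 0$, and since $f$ has scalar (hence central) coefficients,
\[
\phi(fg)=f\,\phi(g)\neq 0,
\]
the last inequality holding because $\QQ[x,y]$ is an integral domain and both factors are nonzero. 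Therefore $fg\neq 0$; applying this with $f=q_e$ and $g=r_d$ completes the argument. (Equivalently, one may fix a $\QQ$-basis $(b_i)$ of~$R$, expand $r_d=\sum_i p_i(x,y)\,b_i$ with $p_i\in\QQ[x,y]$, pick $i_0$ with $p_{i_0}\neq 0$, and note that the $b_{i_0}$-component of $q_e r_d$ is $q_e p_{i_0}\neq 0$; this is the same idea phrased without functionals.)

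I do not expect any genuine obstacle: the statement is essentially bookkeeping together with the domain property of $\QQ[x,y]$. The one conceptual point worth isolating is that one must \emph{not} attempt to argue inside $R$ itself, which need not be a domain in the generality considered here; the hypothesis does its work precisely because the coefficients of~$q$ lie in the \emph{field}~$\QQ$, which simultaneously makes $\QQ[x,y]$ an integral domain and forces $R$ to split as a $\QQ$-vector space, so that a separating functional (or a basis) is available.
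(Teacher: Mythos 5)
Your proof is correct, but its second half takes a genuinely different route from the paper's. The paper refines the degree grading to a total lexicographic order on the monomials $x^iy^j$, so that $q$ and $r$ each have a leading \emph{term} with a single coefficient (in $\QQ\setminus\{0\}$ and $R\setminus\{0\}$, respectively); the leading term of $qr$ is the product of these, and the whole lemma collapses to the trivial scalar case ``nonzero rational times nonzero element of a $\QQ$-algebra is nonzero.'' You stop at the coarser decomposition into homogeneous components, which leaves you with the product $q_e r_d$ of two nonzero homogeneous polynomials, and you then need the extra ingredient of a separating $\QQ$-linear functional on $R$ (equivalently, a $\QQ$-basis) together with the integrality of $\QQ[x,y]$. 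Both arguments are sound: your reduction of the lowest-degree component of $qr$ to $q_e r_d$ is right, and the functional/basis step works because the rational coefficients of $q_e$ act centrally and invertibly. The paper's order-refinement trick is slightly leaner, as it never invokes a basis of an arbitrary $\QQ$-vector space (formally an appeal to choice, though for your purposes a basis of the finite-dimensional span of the coefficients of $r_d$ suffices); your version has the mild advantage of isolating a reusable zero-divisor statement for $\QQ[x,y]$ acting on $R[x,y]$ --- indeed, your functional argument applies verbatim to the power series themselves, making the reduction to homogeneous components optional. Your closing remark that one must not argue inside $R$, which need not be a domain, is exactly the point of both proofs.
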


\begin{proof}
Define the lexicographic order $\prec$ on the monomials~$x^i y^j$
by setting
\[
x^i y^j \prec x^{i'} y^{j'}
\stackrel{\rm def}{\Longleftrightarrow}
(\text{$i<i'$ or ($i=i'$ and $j<j'$)}).
\]
The statement of the lemma is true for $q\in\QQ$ and $r\in R$.
Consequently the leading term of the power series~$qr$,
with respect to the lexicographic order,
is the product of the leading terms of $q$ and~$r$, respectively.
The lemma follows.
\end{proof}

\begin{lemma}
\label{lem:lagrange-implicit}
Let $\varphi_1,\varphi_2,\dots\in\QQ[x,y]$ be homogeneous polynomials
of degrees $\deg(\varphi_k)=k$.
Assume that $\varphi_1\neq 0$.
Let $u\in R$, and let $f=1+\varphi_1 u +\varphi_2 u^2+\cdots\in
R[[x,y]]^*$.
If $f$ commutes with $z\in R[[x,y]]$, then $u$ commutes with~$z$.
\end{lemma}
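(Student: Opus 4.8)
The plan is to reduce the statement to a degree-by-degree induction in the natural grading of $R[[x,y]]$ by total degree in $x$ and~$y$, with Lemma~\ref{lem:division-implicit} as the only non-formal input. The heuristic is that the induction implicitly ``solves'' the equation $f-1=\varphi_1 u+\varphi_2 u^2+\cdots$ for $u$ over the localization of $\QQ[x,y]$ at $\varphi_1$, which is the promised Lagrange-inversion flavor, but it is cleaner to organize this as a grading argument rather than to invert literally (note $\varphi_1$ itself is not invertible in $\QQ[[x,y]]$).

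\textbf{Setup.} First I would replace $f$ by $g:=f-1=\varphi_1 u+\varphi_2 u^2+\cdots\in R[[x,y]]$; since $1$ is central, $z$ commutes with $f$ if and only if $z$ commutes with $g$. Next, grade: write $R[[x,y]]=\prod_{m\ge 0}R[x,y]_m$ by homogeneous degree in $x,y$, and decompose $z=\sum_{m\ge 0}z_m$ and $g=\sum_{n\ge 1}g_n$, where $g_n=\varphi_n u^n$ lies in $R[x,y]_n$ because $\deg\varphi_n=n$ and $u\in R=R[x,y]_0$. Extracting the degree-$p$ homogeneous component of the identity $[z,g]=0$ gives, for every $p\ge 1$,
\[
0=\sum_{n=1}^{p}[z_{p-n},g_n]=\sum_{n=1}^{p}\varphi_n\,[z_{p-n},u^n],
\]
where the second equality uses that $\varphi_n\in\QQ[x,y]$ is central in $R[[x,y]]$ (here the $\QQ$-algebra hypothesis on $R$ enters).

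\textbf{Induction.} The goal is to prove $[z_k,u]=0$ for all $k\ge 0$ by induction on $k$, which immediately yields $[z,u]=0$, i.e.\ $u$ commutes with $z$. Taking $p=k+1$ in the displayed identity (the case $k=0$ is included, with an empty tail sum), the summands with $n\ge 2$ involve $z_{k+1-n}$ with $k+1-n<k$, so the inductive hypothesis gives $[z_{k+1-n},u]=0$ and hence $[z_{k+1-n},u^n]=0$; only the $n=1$ term survives, leaving $\varphi_1\,[z_k,u]=0$. Since $\varphi_1\neq 0$, Lemma~\ref{lem:division-implicit} applied with $q=\varphi_1\in\QQ[[x,y]]$ and $r=[z_k,u]\in R\subset R[[x,y]]$ forces $[z_k,u]=0$, completing the induction.

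\textbf{Obstacles.} Essentially everything here is formal once the scheme is in place, so I do not expect a serious obstacle; the single point where genuine content is used is the implication $\varphi_1 r=0\Rightarrow r=0$, which is exactly Lemma~\ref{lem:division-implicit} (and, unwound, just says that a nonzero linear form $ax+by$ over $\QQ$ is a non-zero-divisor on $R$). The only things needing a bit of care are the bookkeeping of homogeneous degrees (in particular that $g_n$ sits in $R[x,y]_n$) and the centrality of $\varphi_n$, both of which are routine.
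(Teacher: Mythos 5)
Your proof is correct and is essentially identical to the paper's own argument: both decompose $z$ and $f-1$ into homogeneous components, extract the degree-$p$ part of the vanishing commutator, and induct on degree so that only the $\varphi_1[z_k,u]$ term survives, which is then killed by Lemma~\ref{lem:division-implicit}. No further comment is needed.
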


\begin{proof}
Let $z=z_0+z_1+z_2+\cdots$, with $z_k\in R[x,y]$ a
homogeneous polynomial of degree~$k$.
Then $[f,z]=[f-1,z]=\sum_{j\ge 1}\sum_{k\ge 0} \varphi_j [u^j, z_k]$.
In order for this commutator to vanish, it must vanish in each
degree.
Since $\deg(\varphi_j [u^j, z_k])=j+k$, we conclude that
$\sum_{1 \le j \le m} \varphi_j [u^j, z_{m-j}]=0$ for every $m\ge 1$.
So we have:
\begin{itemize}
\item
$\varphi_1[u,z_0]=0$ and $\varphi_1\neq 0$, hence $[u,z_0]=0$ (by Lemma~\ref{lem:division-implicit});
\item
$\varphi_1[u,z_1]+\varphi_2[u^2,z_0]=\varphi_1[u,z_1]=0$, hence $[u,z_1]=0$;
\item
$\varphi_1[u,z_2]+\varphi_2[u^2,z_1]+\varphi_3[u^3,z_0]=\varphi_1[u,z_2]=0$, hence $[u,z_2]=0$;
\end{itemize}
and so on.
We conclude that $[u,z]=[u,z_0+z_1+z_2+\cdots]=0$, as desired.
\end{proof}

\begin{proof}[Proof of Theorem \ref{t pairs commute v0}]
Let us denote
\[
f_b = h_b^{-1}g_b
= (1-\beta_{b1} y u_b + \cdots)(1+\alpha_{b1} x u_b + \cdots)
= 1 +(\alpha_{b1} x-\beta_{b1} y) u_b +\cdots.
\]
By Theorem~\ref{thm:group-theoretic-gi*hi},
it suffices to verify that if $f_b$ commutes with $z\in R[[x,y]]^*$,
then so do both $g_b$ and~$h_b$.
Indeed, we have $\alpha_{b1} x-\beta_{b1} y\neq 0$,
so Lemma~\ref{lem:lagrange-implicit} applies;
hence $u_b$ commutes with~$z$, and therefore so do $g_b$ and~$h_b$.
\end{proof}

Replacing Theorem~\ref{thm:group-theoretic-gi*hi} by Corollary~\ref{cor:group-theoretic-gi*hi}
in the above argument, we obtain a stronger version of the Multiplicative Rule of Three:

\begin{theorem}
\label{t pairs commute v0 2}
Let $R$ be a  $\QQ$-algebra, and let $u_1,\dots,u_N\in R$.
Let $g_1, \ldots, g_N, h_1, \ldots, h_N \in R[[x,y]]^*$ be of the form
\begin{alignat}{5} \label{eq:gi-uu x or y}
g_i &= 1+\alpha_{i1} u_i &&+\alpha_{i2} u_i^2 &&+\alpha_{i3}
u_i^3&&+\cdots, \\
h_i &= 1+\beta_{i1} u_i &&+\beta_{i2} u_i^2 &&+\beta_{i3}
u_i^3&&+\cdots,\label{eq:hi-uu x or y}
\end{alignat}
where $\alpha_{ik},\beta_{ik}\in\QQ[x,y]$ are homogeneous polynomials
of degree~$k$.
Assume that for every $b\in\{2,\dots,N-1\}$,
either $g_b = h_b$ or $\alpha_{b1} \neq \beta_{b1}$.
Then the following are equivalent:
 \begin{itemize}
\item $g_S h_S = h_S g_S$ for all subsets $S\subset\{1,\dots,N\}$;
\item $g_S h_S = h_S g_S$ for all subsets $S$ of cardinality $2$
  and~$3$.
\end{itemize}
\end{theorem}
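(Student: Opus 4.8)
The plan is to run the argument in the proof of Theorem~\ref{t pairs commute v0} almost verbatim, replacing the appeal to Theorem~\ref{thm:group-theoretic-gi*hi} by an appeal to its refinement, Corollary~\ref{cor:group-theoretic-gi*hi}. We work inside the group $G = R[[x,y]]^*$; by hypothesis each $g_i$ and each $h_i$ has constant term~$1$ and hence lies in~$G$. Since $g_i$ and $h_i$ are both power series in the single element $u_i$ with coefficients in the \emph{central} subring $\QQ[x,y]$, they commute with each other, so the relations $g_S h_S = h_S g_S$ for $|S|=1$ hold automatically. Consequently the requirement ``for all $|S|\le 3$'' coincides with ``for $|S|\in\{2,3\}$'', and it suffices to establish the equivalence in the form provided by Corollary~\ref{cor:group-theoretic-gi*hi}.

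To invoke Corollary~\ref{cor:group-theoretic-gi*hi} I must verify its hypothesis: for every $b\in\{2,\dots,N-1\}$ (that is, every $b$ with $1<b<N$), either $g_b=h_b$ or condition~\eqref{eq:z-commutes-with-gihi} holds. Fix such a~$b$. If $g_b=h_b$ there is nothing to check, so assume $\alpha_{b1}\neq\beta_{b1}$. Set $f_b = h_b^{-1}g_b\in G$. Because $x$ and $y$ are central, the subring of $R[[x,y]]$ generated by $\QQ[[x,y]]$ and $u_b$ is commutative, and inside it both inversion and multiplication preserve the property ``the coefficient of $u_b^k$ is a homogeneous polynomial of degree~$k$ in $x,y$''; applying this to $h_b^{-1} = 1-\beta_{b1}u_b+\cdots$ and then to the product with $g_b$ gives
\[
f_b = h_b^{-1}g_b = 1 + (\alpha_{b1}-\beta_{b1})\,u_b + \psi_2 u_b^2 + \psi_3 u_b^3 + \cdots,
\]
with each $\psi_k\in\QQ[x,y]$ homogeneous of degree~$k$ and with linear coefficient $\alpha_{b1}-\beta_{b1}\neq 0$. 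Thus Lemma~\ref{lem:lagrange-implicit} applies to $f_b$: if $z\in R[[x,y]]$ commutes with $f_b$, then $u_b$ commutes with~$z$, and hence so do $g_b = 1+\alpha_{b1}u_b+\cdots$ and $h_b = 1+\beta_{b1}u_b+\cdots$ (each being a series in $u_b$ with central coefficients, so its commutator with $z$ vanishes degree by degree). This is precisely condition~\eqref{eq:z-commutes-with-gihi}.

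With the hypothesis of Corollary~\ref{cor:group-theoretic-gi*hi} in hand, that corollary immediately delivers the equivalence of the two bulleted conditions, finishing the proof. There is no real obstacle here beyond bookkeeping: the substantive inputs — the group-theoretic Rule of Three (Corollary~\ref{cor:group-theoretic-gi*hi}, via the Dehn-diagram Lemmas~\ref{l pairs commute group only} and~\ref{lem:decagons-commute}) and the Lagrange-inversion-flavored Lemma~\ref{lem:lagrange-implicit} (which rests on the absence of zero divisors, Lemma~\ref{lem:division-implicit}) — are already established. The only point deserving a moment's care is the structural description of $f_b$ displayed above, which is exactly where the centrality of $x$ and $y$ in $R[[x,y]]$ enters.
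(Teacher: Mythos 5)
Your proposal is correct and follows the paper's own (very terse) proof essentially verbatim: verify condition~\eqref{eq:z-commutes-with-gihi} for each $b$ with $\alpha_{b1}\neq\beta_{b1}$ via Lemma~\ref{lem:lagrange-implicit} applied to $f_b=h_b^{-1}g_b$, handle the $g_b=h_b$ case through Corollary~\ref{cor:group-theoretic-gi*hi}, and note that the $|S|=1$ relations are automatic since $g_i$ and $h_i$ are series in the single element $u_i$ with central coefficients. The bookkeeping about the homogeneity of the coefficients of $f_b$ and the nonvanishing of its linear term is exactly the point the paper leaves implicit, and you have it right.
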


\begin{proof}[Proof of Theorem~\ref{th:mrot-linear-uu}]
It suffices to note that the assumptions in Theorem~\ref{t pairs commute v0 2}
are satisfied when $g_i = 1+\alpha_{i1} u_i$ and $h_i = 1+\beta_{i1}
u_i$,
for any linear polynomials  $\alpha_{i1}, \beta_{i1} \in \QQ[x,y]$.
\end{proof}

\section{Proof of Theorem~\ref{t half ROT}}
\label{s proof of theorem t half ROT}
\begin{lemma}
\label{l deg1 super 1}
Let $R$ be a ring, and let $g_a, g_b, v_a, v_b \in R$ satisfy
\begin{align*}
[v_a,g_a] = [v_b,g_b]&=0, 
\\
[v_b+v_a,g_b g_a] &= 0. 
\end{align*}
Then
$[v_a,g_b]g_a = g_b[g_a,v_b]$. 
\end{lemma}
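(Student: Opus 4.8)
The plan is to establish this identity by a direct expansion of the single hypothesis $[v_b+v_a,\,g_bg_a]=0$, rewriting it with the help of the two degree-one relations $[v_a,g_a]=0$ and $[v_b,g_b]=0$ until it takes the asserted form. No auxiliary machinery (Dehn diagrams, formal power series, potential invertibility) is needed: this is a purely algebraic manipulation inside the noncommutative ring~$R$, and in fact it is the degree-one building block on which the heavier arguments later rely.

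First I would write the hypothesis $[v_b+v_a,\,g_bg_a]=0$ out in full as
\[
v_bg_bg_a + v_ag_bg_a = g_bg_av_b + g_bg_av_a.
\]
Next, using $[v_b,g_b]=0$ I replace $v_bg_b$ by $g_bv_b$ in the first term on the left, and using $[v_a,g_a]=0$ I replace $g_av_a$ by $v_ag_a$ in the last term on the right, obtaining
\[
g_bv_bg_a + v_ag_bg_a = g_bg_av_b + g_bv_ag_a.
\]
Finally I would collect the two terms containing $v_a$ on the left and the two remaining terms on the right, then factor $g_a$ out on the right of the left-hand side and $g_b$ out on the left of the right-hand side:
\[
(v_ag_b - g_bv_a)\,g_a = g_b\,(g_av_b - v_bg_a),
\]
which is exactly $[v_a,g_b]\,g_a = g_b\,[g_a,v_b]$, as claimed.

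The only point requiring care is the order of factors, since $R$ is noncommutative; but every step above is simply a substitution licensed by one of the two hypotheses $[v_a,g_a]=0$ or $[v_b,g_b]=0$, so there is no genuine obstacle. The surprising cancellations that make the Rules of Three work occur only at higher cardinality; here the statement is an elementary rearrangement, and I expect the write-up to be a three-line computation.
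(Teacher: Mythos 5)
Your proposal is correct and is essentially the paper's own argument: the paper packages the same computation as the single identity $[v_a,g_b]g_a - g_b[g_a,v_b]=[v_b+v_a, g_b g_a] - g_b[v_a,g_a] - [v_b,g_b]g_a$, which is exactly your expand-substitute-refactor manipulation written in one line.
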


\begin{proof}
This follows from the identity
\[
[v_a,g_b]g_a - g_b[g_a,v_b]
=[v_b+v_a, g_b g_a] - g_b[v_a,g_a] - [v_b,g_b]g_a\,.
\qedhere
\]
\end{proof}

\begin{lemma}
\label{l deg1 super 2}
Let $R$ be a ring, and let $g_a,g_b,g_c,v_a,v_b,v_c \in R$ satisfy
\begin{align*}
[v_b,g_b] &= 0, \\
[v_b+v_a, g_bg_a] =
[v_c+v_b, g_cg_b] &= 0, \\
[v_c+v_b+v_a,g_cg_bg_a] &= 0. 
\end{align*}
Then
$[v_a, g_c]g_bg_a = g_cg_b [g_a,v_c]$. 
\end{lemma}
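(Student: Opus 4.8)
The plan is to prove Lemma~\ref{l deg1 super 2} in the same style as Lemma~\ref{l deg1 super 1}: exhibit the expression $[v_a, g_c]\,g_bg_a - g_cg_b\,[g_a,v_c]$ as an explicit $R$-linear combination of the left-hand sides of the four hypothesized relations, so that it vanishes term by term. Concretely, I would establish the identity
\[
[v_a, g_c]\,g_bg_a - g_cg_b\,[g_a,v_c]
= [v_c+v_b+v_a,\, g_cg_bg_a] - [v_c+v_b,\, g_cg_b]\,g_a - g_c\,[v_b+v_a,\, g_bg_a] + g_c\,[v_b,g_b]\,g_a,
\]
which holds in any associative ring, with no hypotheses whatsoever. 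Granting this, each of the four terms on the right vanishes by the assumptions $[v_c+v_b+v_a, g_cg_bg_a]=0$, $[v_c+v_b, g_cg_b]=0$, $[v_b+v_a, g_bg_a]=0$, and $[v_b,g_b]=0$, and the lemma follows.

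To see why this is the right identity (and to make its verification conceptual rather than a blind expansion), I would derive it by computing $[v_a, g_cg_bg_a]$ in two ways. On one hand, the Leibniz rule for commutators gives $[v_a, g_cg_bg_a] = [v_a,g_c]\,g_bg_a + g_c\,[v_a,g_b]\,g_a + g_cg_b\,[v_a,g_a]$. On the other hand, writing $v_a = (v_c+v_b+v_a)-(v_c+v_b)$ and using $[v_c+v_b,\, g_cg_b]\,g_a = 0$ to slide $g_a$ out, one obtains $[v_a, g_cg_bg_a] = g_cg_b\,[g_a, v_c] + g_cg_b\,[g_a, v_b]$ modulo the hypothesized relations. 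Equating the two expressions, it remains to check that $g_c\bigl([v_a,g_b]\,g_a + g_b\,[v_a,g_a]\bigr) = g_cg_b\,[g_a, v_b]$ modulo the relations; but this is exactly the identity underlying the proof of Lemma~\ref{l deg1 super 1}, namely $[v_a,g_b]\,g_a + g_b\,[v_a,g_a] - g_b\,[g_a,v_b] = [v_b+v_a,\, g_bg_a] - [v_b,g_b]\,g_a$, whose right side vanishes here since $[v_b+v_a, g_bg_a]=0$ and $[v_b,g_b]=0$. Substituting and cancelling the common $g_cg_b\,[g_a,v_b]$ term yields the claim.

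There is no genuine obstacle here: the only thing that requires care is the bookkeeping — keeping signs and re-associations straight in the commutator expansions — together with the observation that the sub-expression $[v_a,g_b]\,g_a + g_b\,[v_a,g_a]$ (which appears precisely because we are given no single-index relation for $a$ or $c$, only $[v_b,g_b]=0$) is controlled by the Lemma~\ref{l deg1 super 1} identity rather than by $[v_a,g_a]=0$. In the write-up I would simply display the boxed identity above and verify it by expanding both sides, in parallel with the one-line proof of Lemma~\ref{l deg1 super 1}.
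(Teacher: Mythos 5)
Your proposal is correct and coincides with the paper's own proof: the displayed identity is exactly the one the paper uses (with the two middle terms written in the opposite order), and it does hold unconditionally in any ring, so each term on the right vanishes under the hypotheses. The additional derivation via expanding $[v_a, g_cg_bg_a]$ in two ways is a nice way to motivate the identity but adds nothing beyond the paper's one-line argument.
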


\begin{proof}
This follows from the identity
\begin{align*}
&[v_a, g_c]g_bg_a - g_cg_b[g_a,v_c]\\
&=[v_c+v_b+v_a, g_cg_bg_a]
-g_c [v_b+v_a, g_bg_a]
-[v_c+v_b, g_cg_b] g_a
+g_c [v_b,g_b] g_a\,. \qedhere
\end{align*}
\end{proof}

\begin{lemma}
\label{l for super}
Let $R$ be a monoid (or a ring), and let $g_1, \ldots, g_m, z, z' \in R$ satisfy
\begin{align}
z g_{1} &= g_m z', \label{e deg1N 2}\\
z g_bg_1 &= g_m g_b z'  \qquad \text{for all $1  < b <  m$.} \label{e deg1N 3}
\end{align}
If  $g_1$ and $g_m$ are potentially invertible, then
\begin{align}
z g_{m-1}g_{m-2}\cdots g_1
=g_m g_{m-1}\cdots g_2 z'. \label{e deg1N c}
\end{align}
\end{lemma}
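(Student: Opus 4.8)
The plan is to pass to a larger ring (or monoid) in which $g_1$ and $g_m$ are invertible --- legitimate because these elements are potentially invertible, and since the conclusion \eqref{e deg1N c} is an equation between elements of the original~$R$ and $R$ embeds into the enlargement, it suffices to prove it there. In this enlarged setting the key observation will be that $w := g_m^{-1} z$ commutes with each of $g_2,\dots,g_{m-1}$.

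First I would solve \eqref{e deg1N 2} for $z'$, obtaining $z' = g_m^{-1} z g_1 = w g_1$. Substituting this into \eqref{e deg1N 3} gives $z g_b g_1 = g_m g_b w g_1$ for every $b$ with $1<b<m$; cancelling the (now invertible) factor $g_1$ on the right yields $z g_b = g_m g_b w$, that is, $w g_b = g_b w$. This is the promised commutation.

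Then the conclusion follows by a one-line computation: starting from the left-hand side of \eqref{e deg1N c}, write $z = g_m w$ to get $z\, g_{m-1}\cdots g_1 = g_m\, w\, g_{m-1}\cdots g_2\, g_1$; move $w$ past $g_{m-1},\dots,g_2$ using the commutation just established, obtaining $g_m\, g_{m-1}\cdots g_2\, w\, g_1 = g_m\, g_{m-1}\cdots g_2\, z'$, which is the right-hand side. The case $m=2$ involves no indices $b$ and is literally \eqref{e deg1N 2}, so it is covered as well.

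There is essentially no obstacle here; the only point requiring care is the justification of the right-cancellation of~$g_1$, which is exactly what potential invertibility provides --- and one sees that both $g_1^{-1}$ (for that cancellation) and $g_m^{-1}$ (to define $w$ and to solve for $z'$) are genuinely used, so the hypothesis on both endpoints is natural.
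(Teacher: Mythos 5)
Your proof is correct and is essentially the paper's own argument: the paper likewise passes to an extension where $g_1,g_m$ are invertible, sets $r=g_m^{-1}z=z'g_1^{-1}$ (your $w$), observes that \eqref{e deg1N 3} says $r$ commutes with each $g_b$, and concludes that $r$ commutes with $g_{m-1}\cdots g_2$, which is \eqref{e deg1N c}. No differences worth noting.
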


\begin{proof}
Passing to an extension of~$R$ wherein $g_1$ and~$g_m$ have inverses,
let us denote \mbox{$r=g_m^{-1}z=z'g_1^{-1}$} (cf.\ \eqref{e deg1N 2}).
Condition~\eqref{e deg1N 3} means that $r$ commutes with~$g_b$ for
$1\!<\!b\!<\!m$.
Hence $r$ commutes with $g_{m-1}\cdots g_2$, which is
nothing but~\eqref{e deg1N c}.
\end{proof}

\pagebreak[3]

\begin{corollary}
\label{c half ROTv2}
Let $R$ be a ring and let $v_1, \ldots, v_N, g_1, \ldots, g_N \in R$ with  $g_1,\dots,g_N$ potentially invertible.  Suppose
\begin{align*}
\text{$[\textstyle\sum_{i \in S} v_i, g_S] = 0$ for all $S \subset \{1,\ldots, N\}$ of cardinality $\le 3$.}
\end{align*}
Then for each subset $S=\{s_1<\cdots<s_m\}\subset\{1,\dots,N\}$,  
\begin{align*}
[v_{s_1}, g_{s_m}]g_{s_{m-1}}g_{s_{m-2}}\cdots g_{s_1} = g_{s_m}g_{s_{m-1}}\cdots g_{s_2}[g_{s_1},v_{s_m}].
\end{align*}
\end{corollary}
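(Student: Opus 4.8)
The plan is to fix a subset $S=\{s_1<\cdots<s_m\}\subset\{1,\dots,N\}$ and to deduce the asserted identity as the conclusion~\eqref{e deg1N c} of Lemma~\ref{l for super}, applied to the ordered tuple of potentially invertible elements $g_{s_1},\dots,g_{s_m}$ (playing the role of $g_1,\dots,g_m$ there) together with
\[
z=[v_{s_1},g_{s_m}]\qquad\text{and}\qquad z'=[g_{s_1},v_{s_m}].
\]
With these choices, \eqref{e deg1N c} is exactly the statement of Corollary~\ref{c half ROTv2}, so everything reduces to verifying the hypotheses \eqref{e deg1N 2} and~\eqref{e deg1N 3} of that lemma, namely $z\,g_{s_1}=g_{s_m}\,z'$ and $z\,g_{s_b}g_{s_1}=g_{s_m}g_{s_b}\,z'$ for every $b$ with $1<b<m$. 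Note that potential invertibility of $\{g_1,\dots,g_N\}$ in particular gives potential invertibility of the pair $\{g_{s_1},g_{s_m}\}$, which is all that Lemma~\ref{l for super} requires.

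First I would establish $z\,g_{s_1}=g_{s_m}\,z'$ by invoking Lemma~\ref{l deg1 super 1} with $(a,b)=(s_1,s_m)$. Its three hypotheses $[v_{s_1},g_{s_1}]=0$, $[v_{s_m},g_{s_m}]=0$, and $[v_{s_m}+v_{s_1},g_{s_m}g_{s_1}]=0$ are exactly the instances $S'=\{s_1\}$, $S'=\{s_m\}$, $S'=\{s_1,s_m\}$ of the corollary's hypothesis (recalling the convention $g_{\{s_1,s_m\}}=g_{s_m}g_{s_1}$ and $\sum_{i\in\{s_1,s_m\}}v_i=v_{s_1}+v_{s_m}$), and its conclusion $[v_{s_1},g_{s_m}]g_{s_1}=g_{s_m}[g_{s_1},v_{s_m}]$ is precisely $z\,g_{s_1}=g_{s_m}\,z'$.

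Next, for each $b$ with $1<b<m$, I would establish $z\,g_{s_b}g_{s_1}=g_{s_m}g_{s_b}\,z'$ by invoking Lemma~\ref{l deg1 super 2} with $(a,b,c)=(s_1,s_b,s_m)$. Its hypotheses $[v_{s_b},g_{s_b}]=0$, $[v_{s_b}+v_{s_1},g_{s_b}g_{s_1}]=0$, $[v_{s_m}+v_{s_b},g_{s_m}g_{s_b}]=0$, and $[v_{s_m}+v_{s_b}+v_{s_1},g_{s_m}g_{s_b}g_{s_1}]=0$ are the instances $S'=\{s_b\}$, $\{s_1,s_b\}$, $\{s_b,s_m\}$, $\{s_1,s_b,s_m\}$ of the hypothesis, all of cardinality $\le 3$; its conclusion $[v_{s_1},g_{s_m}]g_{s_b}g_{s_1}=g_{s_m}g_{s_b}[g_{s_1},v_{s_m}]$ is precisely $z\,g_{s_b}g_{s_1}=g_{s_m}g_{s_b}\,z'$. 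With both hypotheses of Lemma~\ref{l for super} in hand, its conclusion yields the corollary for all $m\ge 2$; the degenerate cases $m=0$ (vacuous) and $m=1$ (where the assertion is $[v_{s_1},g_{s_1}]=[g_{s_1},v_{s_1}]$, both sides being $0$) are handled directly.

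I do not expect a genuine obstacle here: all of the real content is carried by the already-proven Lemmas~\ref{l deg1 super 1}, \ref{l deg1 super 2}, and~\ref{l for super}, which are exactly tailored for this purpose. The only points requiring care are bookkeeping the reversed-product convention $g_S=g_{s_m}\cdots g_{s_1}$ when translating the corollary's hypotheses into the hypotheses of the auxiliary lemmas, and observing that potential invertibility is needed only for the two extreme indices $s_1,s_m$, since the intermediate relations come from the purely ring-theoretic Lemma~\ref{l deg1 super 2}, with no invertibility assumed.
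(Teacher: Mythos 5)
Your proof is correct and is essentially identical to the paper's: the paper also obtains the identity by applying Lemmas~\ref{l deg1 super 1}, \ref{l deg1 super 2}, and~\ref{l for super} with $z=[v_{s_1},g_{s_m}]$ and $z'=[g_{s_1},v_{s_m}]$. Your write-up simply makes explicit the bookkeeping that the paper's one-line proof leaves to the reader.
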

\begin{proof}
Apply Lemmas~\ref{l deg1 super 1}, \ref{l deg1 super 2}, and~\ref{l for super},
with $z=[v_{s_1},g_{s_m}]$ and $z'=[g_{s_1},v_{s_m}]$.
\end{proof}

\begin{proof}[Proof of Theorem~\ref{t half ROT}]
We need to show that relations
\begin{align}
[v_a,g_a] &= 0  \qquad \text{for all $1 \le a \le N$}; \label{e halfrot 1}\\
[v_b + v_a, g_bg_a] &= 0 \qquad \text{for all $1 \leq a < b \leq N$}; \label{e halfrot 2}\\
[v_c + v_b + v_a, g_cg_bg_a] &= 0 \qquad \text{for all $1 \leq a < b < c \leq N$} \label{e halfrot 3}
\end{align}
imply $[v_{s_m} + v_{s_{m-1}} + \cdots + v_{s_1}, g_S] = 0$ for any
subset~$S\!=\!\{s_1\!<\!\cdots\!<\!s_m\}\!\subset\!\{1,\dots,N\}$.
We establish this claim by induction on~$m=|S|$.
The cases $m\le 3$ are covered~by \eqref{e halfrot 1}--\eqref{e
  halfrot 3}.
Using the induction assumption and the Leibniz rule for commutators, we get:
\begin{align*}
& [v_{s_m} + v_{s_{m-1}} + \cdots + v_{s_1}, g_{s_m} \cdots g_{s_1}]\\
&=[(v_{s_m} + \cdots + v_{s_2}) + (v_{s_{m-1}} + \cdots + v_{s_1}) - (v_{s_{m-1}} + \cdots
    + v_{s_2}),
  g_{s_m}\cdots g_{s_1}]\\
&=[v_{s_m} + \cdots + v_{s_2}, g_{s_m}\cdots g_{s_2}] g_{s_1}
+ g_{s_m}\cdots g_{s_2} [v_{s_m} + \cdots + v_{s_2}, g_{s_1}] \\
&+ [v_{s_{m-1}}+\cdots + v_{s_1}, g_{s_m}] g_{s_{m-1}}\cdots g_{s_1}
+ g_{s_m} [v_{s_{m-1}} + \cdots + v_{s_1}, g_{s_{m-1}}\cdots g_{s_1}]
\\
&- [v_{s_{m-1}} + \cdots + v_{s_2}, g_{s_m}] g_{s_{m-1}}\cdots g_{s_1}
- g_{s_m} [v_{s_{m-1}}+\cdots+v_{s_2}, g_{s_{m-1}}\cdots g_{s_2}] g_{s_1} \\
& \qquad - g_{s_m}\cdots g_{s_2} [v_{s_{m-1}}+\cdots+v_{s_2}, g_{s_1}]\\
&=[v_{s_1}, g_{s_m}] g_{s_{m-1}}\cdots g_{s_1}-g_{s_m}\cdots g_{s_2} [g_{s_1},v_{s_m}] \\
&= 0,
\end{align*}
where the last equality is by Corollary \ref{c half ROTv2}.
\end{proof}

\begin{proof}[Proof of Theorem~\ref{th:derivations-ROT}]
This theorem is proved by exactly the same
argument as the one used for Theorem~\ref{t half ROT}.
\end{proof}


\section{Proofs of Theorems~\ref{t es commute uv 2},
\ref{t half group ROT},
\ref{t 2variable linear strengthened}, \ref{t half quadratic half group ROT}, and~\ref{t half group ROTv2}}
\label{sec:main-proofs}

\begin{lemma}
\label{lem:cba*CBA}
Let $R$ be a ring, and let $g_a,g_b,g_c,h_a,h_b,h_c\in R$, with $h_b$
potentially invertible, satisfy the relations
\begin{align}
g_bh_b&=h_bg_b,\label{eq:bB=Bb}\\
g_bg_ah_bh_a &= h_bh_ag_bg_a, \label{eq:baBA=BAba}\\
g_cg_bh_ch_b &= h_ch_bg_cg_b. \label{eq:cbCB=CBcb}
\end{align}
Then the following are equivalent:
\begin{align*}
g_cg_bg_ah_ch_bh_a &= h_ch_bh_ag_cg_bg_a,\\
g_cg_b[g_a,h_c]h_bh_a &= h_ch_b[h_a,g_c]g_bg_a.
\end{align*}
\end{lemma}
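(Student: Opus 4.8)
The plan is to prove the stronger statement that the two displayed expressions are literally \emph{equal} as elements of~$R$, from which their simultaneous vanishing is immediate. Write $X = g_cg_bg_ah_ch_bh_a - h_ch_bh_ag_cg_bg_a$ and $Y = g_cg_b[g_a,h_c]h_bh_a - h_ch_b[h_a,g_c]g_bg_a$; I claim $X = Y$.

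The first step is to use the hypothesis that $h_b$ is potentially invertible: since $R$ embeds into a ring in which $h_b$ is invertible, and both $X$ and $Y$ are genuine elements of~$R$, it suffices to establish $X=Y$ after such an embedding. So I would henceforth assume $h_b$ is invertible (this is the same device used in Lemma~\ref{l for super}). The second step is a bookkeeping expansion: writing $[g_a,h_c]=g_ah_c-h_cg_a$ and $[h_a,g_c]=h_ag_c-g_ch_a$, one sees that the ``leading'' terms of~$Y$ are exactly the two terms of~$X$, so that $X - Y = g_cg_bh_cg_ah_bh_a - h_ch_bg_ch_ag_bg_a$. Thus everything reduces to the single identity $g_cg_bh_cg_ah_bh_a = h_ch_bg_ch_ag_bg_a$.

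For that identity I would use all three hypotheses in turn. From \eqref{eq:cbCB=CBcb}, cancelling $h_b$ on the right gives $g_cg_bh_c = h_ch_bg_cg_bh_b^{-1}$; and \eqref{eq:bB=Bb} says $g_b$ commutes with $h_b$, hence with $h_b^{-1}$, so $g_cg_bh_c = h_ch_bg_ch_b^{-1}g_b$. Substituting this into the left-hand side and then invoking \eqref{eq:baBA=BAba} in the form $g_bg_ah_bh_a = h_bh_ag_bg_a$, the $h_b^{-1}$ is absorbed and the expression collapses to $h_ch_bg_ch_ag_bg_a$, as required; hence $X - Y = 0$.

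I do not expect a real obstacle here. The only point that needs care is the appeal to potential invertibility, which is precisely what licenses introducing $h_b^{-1}$ and conjugating by $h_b$; everything else is a two-line manipulation. If one preferred an inverse-free argument one could instead try to exhibit $X-Y$ as an explicit two-sided-ideal combination of the left-hand sides of \eqref{eq:bB=Bb}--\eqref{eq:cbCB=CBcb}, but the embedding argument is cleaner and matches the conventions already used in the paper.
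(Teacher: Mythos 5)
Your proof is correct and is essentially the paper's argument: the paper verifies the single unconditional identity
$[g_cg_bg_a,h_ch_bh_a]=g_cg_b[g_a,h_c]h_bh_a-h_ch_b[h_a,g_c]g_bg_a+[g_cg_b,h_ch_b]h_b^{-1}g_ah_bh_a+h_ch_bg_ch_b^{-1}([g_bg_a,h_bh_a]-[g_b,h_b]h_b^{-1}g_ah_bh_a)$
in the extension where $h_b$ is invertible, and the hypotheses kill the last three terms. Your step-by-step derivation of $X-Y=0$ (cancel $h_b$ in \eqref{eq:cbCB=CBcb}, commute $g_b$ past $h_b^{-1}$ via \eqref{eq:bB=Bb}, then apply \eqref{eq:baBA=BAba}) is exactly this identity unwound, so there is nothing to fix.
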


\begin{proof}
The statement follows from the identity (in the appropriate extension
of~$R$):
\begin{align}
&[g_cg_bg_a,h_ch_bh_a]=g_cg_b[g_a,h_c]h_bh_a-h_ch_b[h_a,g_c]g_bg_a \label{eq:[cba,CBA]}
\\
&\qquad +[g_cg_b,h_ch_b]h_b^{-1}g_ah_bh_a+h_ch_bg_ch_b^{-1}([g_bg_a,h_bh_a]-[g_b,h_b]h_b^{-1}g_ah_bh_a). \notag
\qedhere
\end{align}
\end{proof}

\begin{lemma}
\label{lem:strong-commute-abc}
Let $R$ be a ring,
and let $g_a,g_b,g_c,h_a,h_b,h_c$ be
potentially invertible elements of~$R$ satisfying
\eqref{eq:bB=Bb}--
\eqref{eq:cbCB=CBcb}.
Then any two of the following conditions imply the third:
\begin{align*}
g_cg_bg_ah_ch_bh_a &= h_ch_bh_ag_cg_bg_a,\\
g_cg_b[g_a,h_c] &= [h_a,g_c]g_bg_a,\\
h_ch_b[h_a,g_c] &=[h_a,g_c]h_bh_a.
\end{align*}
\end{lemma}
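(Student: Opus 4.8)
The plan is to reduce the claim to a single ring identity, using Lemma~\ref{lem:cba*CBA} as a black box. Denote by $(1)$, $(2)$, $(3)$ the three conditions displayed in the statement, in the order listed. By Lemma~\ref{lem:cba*CBA} --- whose hypotheses are precisely \eqref{eq:bB=Bb}--\eqref{eq:cbCB=CBcb} together with potential invertibility of $h_b$, all of which we have --- condition $(1)$ is equivalent to the auxiliary identity
\[
g_cg_b[g_a,h_c]h_bh_a = h_ch_b[h_a,g_c]g_bg_a,
\]
which I will call $(1')$. It therefore suffices to show that any two of $(1')$, $(2)$, $(3)$ imply the third.

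For this, introduce the ``defects''
\[
\delta_1 = g_cg_b[g_a,h_c]h_bh_a - h_ch_b[h_a,g_c]g_bg_a, \qquad
\delta_2 = g_cg_b[g_a,h_c] - [h_a,g_c]g_bg_a, \qquad
\delta_3 = h_ch_b[h_a,g_c] - [h_a,g_c]h_bh_a,
\]
so that $(1')$, $(2)$, $(3)$ say $\delta_1=0$, $\delta_2=0$, $\delta_3=0$ respectively. A direct expansion and regrouping of terms yields
\[
\delta_2\,h_bh_a - \delta_3\,g_bg_a = \delta_1 - [h_a,g_c]\bigl(g_bg_ah_bh_a - h_bh_ag_bg_a\bigr),
\]
and the last parenthesized factor vanishes by \eqref{eq:baBA=BAba}. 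Hence $\delta_1 = \delta_2\,h_bh_a - \delta_3\,g_bg_a$.

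All three implications now follow from this identity. If $\delta_2 = \delta_3 = 0$, then $\delta_1 = 0$, i.e.\ $(1')$ holds, and so $(1)$ holds by Lemma~\ref{lem:cba*CBA}. If $\delta_1 = \delta_2 = 0$, then $\delta_3\,g_bg_a = 0$; passing to an extension of $R$ in which $g_a, g_b, g_c, h_a, h_b, h_c$ are simultaneously invertible --- which exists since these elements are potentially invertible --- the factor $g_bg_a$ is invertible there, so $\delta_3 = 0$. Symmetrically, if $\delta_1 = \delta_3 = 0$, then $\delta_2\,h_bh_a = 0$, and invertibility of $h_bh_a$ in a suitable extension gives $\delta_2 = 0$. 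The computations are routine; the only care needed is bookkeeping: correctly invoking Lemma~\ref{lem:cba*CBA} to pass between $(1)$ and $(1')$ (this is where \eqref{eq:bB=Bb}, \eqref{eq:cbCB=CBcb} and potential invertibility of $h_b$ enter), and justifying the two cancellations (which use potential invertibility of $g_a, g_b$ and of $h_a, h_b$). I do not anticipate any real obstacle beyond checking the displayed identity.
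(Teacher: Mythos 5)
Your proof is correct and is essentially the paper's own argument: the paper likewise derives the identity $[g_cg_bg_a,h_ch_bh_a]=\delta_2\,h_bh_a-\delta_3\,g_bg_a$ (modulo terms killed by \eqref{eq:bB=Bb}--\eqref{eq:cbCB=CBcb}) by combining the identity \eqref{eq:[cba,CBA]} underlying Lemma~\ref{lem:cba*CBA} with exactly your regrouping of the $[h_a,g_c]$ terms, and then cancels $g_bg_a$ or $h_bh_a$ in an extension where they are invertible. The only difference is organizational: you factor the computation through Lemma~\ref{lem:cba*CBA} as a black box, while the paper writes the two steps as a single displayed identity.
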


\begin{proof}
Adding the trivial identity
\[
0 = -[h_a,g_c]g_bg_ah_bh_a + [h_a,g_c]h_bh_ag_bg_a +[h_a,g_c][g_bg_a,h_bh_a]
\]
to \eqref{eq:[cba,CBA]}, we obtain
\begin{align*}
&[g_cg_bg_a,h_ch_bh_a]=(g_cg_b[g_a,h_c]-[h_a,g_c]g_bg_a)h_bh_a
-(h_ch_b[h_a,g_c]-[h_a,g_c]h_bh_a)g_bg_a\\
&\!+\![h_a,g_c][g_bg_a,h_bh_a]
\!+\![g_cg_b,h_ch_b]h_b^{-1}g_ah_bh_a
\!+\!h_ch_bg_ch_b^{-1}([g_bg_a,h_bh_a]\!-\![g_b,h_b]h_b^{-1}g_ah_bh_a),
\end{align*}
and the claim follows.
\end{proof}

\begin{lemma}
\label{lem:strong-commute-ab}
Let $R$ be a (unital) ring, and let $g_a,g_b,h_a,h_b$ be potentially
invertible elements of~$R$ satisfying
$g_ah_a=h_ag_a$ and $g_bh_b=h_bg_b$.
Then any two of the following conditions imply the third:
\begin{align*}
g_bg_ah_bh_a &= h_bh_ag_bg_a,\\
g_b[g_a,h_b] &= [h_a,g_b]g_a,\\
h_b[h_a,g_b] &=[h_a,g_b]h_a.
\end{align*}
\end{lemma}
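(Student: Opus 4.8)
The plan is to imitate the proof of Lemma~\ref{lem:strong-commute-abc}, producing a two‑variable analogue of identity~\eqref{eq:[cba,CBA]}; the situation is in fact cleaner, since no inverses are needed to write down the governing identity. \textbf{First}, I would establish in $R$ itself the elementary identity
\[
[g_bg_a,\,h_bh_a] \;=\; g_b[g_a,h_b]h_a \;-\; h_b[h_a,g_b]g_a .
\]
This comes from expanding $g_bg_ah_bh_a$: commute $g_a$ past $h_b$ (this produces the summand $g_b[g_a,h_b]h_a$), then use the hypotheses $g_bh_b=h_bg_b$ and $g_ah_a=h_ag_a$ to rewrite the leftover $g_bh_bg_ah_a$ as $h_bg_bh_ag_a$, and finally commute $g_b$ past $h_a$ (producing the summand $-h_b[h_a,g_b]g_a$).

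\textbf{Second}, exactly as in the proof of Lemma~\ref{lem:strong-commute-abc}, I would add a trivial identity to reshape the right‑hand side so that it matches conditions~(2) and~(3). Since $g_ah_a=h_ag_a$ we have $[h_a,g_b]g_ah_a=[h_a,g_b]h_ag_a$, and adding $0=-[h_a,g_b]g_ah_a+[h_a,g_b]h_ag_a$ to the identity above yields
\[
[g_bg_a,\,h_bh_a]
\;=\; \bigl(g_b[g_a,h_b]-[h_a,g_b]g_a\bigr)h_a
\;-\; \bigl(h_b[h_a,g_b]-[h_a,g_b]h_a\bigr)g_a .
\]
Abbreviate $P=g_b[g_a,h_b]-[h_a,g_b]g_a$ and $Q=h_b[h_a,g_b]-[h_a,g_b]h_a$, so that the three conditions of the lemma are, respectively, $[g_bg_a,h_bh_a]=0$, $P=0$, and $Q=0$, and they are tied together by the identity $[g_bg_a,h_bh_a]=Ph_a-Qg_a$.

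\textbf{Third}, the three implications drop out. If $P=Q=0$ then $[g_bg_a,h_bh_a]=0$, giving~(1) from~(2) and~(3). If $[g_bg_a,h_bh_a]=0$ and $P=0$, then $Qg_a=0$; passing to an extension of $R$ in which $g_a$ is invertible (potential invertibility) and cancelling $g_a$ on the right gives $Q=0$, i.e.\ (3) from~(1) and~(2). Symmetrically, $[g_bg_a,h_bh_a]=0$ together with $Q=0$ forces $Ph_a=0$, hence $P=0$ after cancelling the potentially invertible element $h_a$, i.e.\ (2) from~(1) and~(3). (Note that only the potential invertibility of $g_a$ and $h_a$ is actually used.)

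I do not anticipate a genuine obstacle: the entire content lies in the bookkeeping of the first two steps, namely in choosing the rearrangement so that the coefficients of $h_a$ and of $g_a$ are precisely the ``left side minus right side'' of conditions~(2) and~(3). This choice is dictated by the analogous manipulation in the proof of Lemma~\ref{lem:strong-commute-abc}, with the $h_b^{-1}$‑terms appearing there simply absent here because there is no ``middle'' index to commute past.
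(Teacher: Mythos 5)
Your proof is correct: the identity $[g_bg_a,h_bh_a]=Ph_a-Qg_a$ with $P=g_b[g_a,h_b]-[h_a,g_b]g_a$ and $Q=h_b[h_a,g_b]-[h_a,g_b]h_a$ checks out by direct expansion, and the three implications then follow by cancelling the potentially invertible elements $g_a$ or $h_a$ in a suitable ring extension. The paper's own proof is a one-liner --- it observes that the lemma is the $g_b=h_b=1$ case of Lemma~\ref{lem:strong-commute-abc} after renaming $c$ to $b$ --- and your two-variable identity is precisely what identity~\eqref{eq:[cba,CBA]}, together with the added trivial term, becomes under that specialization (the $h_b^{-1}$-terms all carry a factor of $[g_a,h_a]$ or $[g_b,h_b]$ and vanish). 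So the two arguments coincide in substance; yours is self-contained and makes explicit that only the potential invertibility of $g_a$ and $h_a$ is used, while the paper's is a citation of the already-established three-variable case.
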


\begin{proof}
This is the $g_b=h_b=1$ case of Lemma~\ref{lem:strong-commute-abc},
with a suitable change of notation.
\end{proof}

Theorem~\ref{th:master-criterion} below, although
  not a ``Rule of Three,'' is a powerful result
which will be used to prove Theorems~\ref{t half group ROT} and~\ref{t
  2variable linear strengthened}.

\begin{theorem}
\label{th:master-criterion}
Let $g_1, \ldots, g_N, h_1, \ldots, h_N$
be potentially invertible elements of a ring~$R$ satisfying the relations
\begin{align}
g_a h_a &= h_a g_a &&\text{for all $1 \leq a  \leq N$}; \label{e gh}\\
g_b[g_a,h_b] &= [h_a,g_b]g_a &&\text{for all $1 \leq a < b \leq N$}; \label{e ggh}\\
g_cg_b[g_a,h_c] &= [h_a, g_c]g_bg_a &&\text{for all $1 \leq a < b < c \leq N$}; \label{e gggh}\\
h_b[h_a,g_b] &= [h_a,g_b]h_a &&\text{for all $1 \leq a < b \leq N$}; \label{e hhg=hgh}\\
h_ch_b[h_a,g_c] &= [h_a, g_c]h_bh_a &&\text{for all $1 \leq a < b < c
  \leq N$}. \label{e hhhg=hghh}
\end{align}
Then $g_Sh_S=h_Sg_S$ for any $S \subset \{1,\ldots, N\}$.
\end{theorem}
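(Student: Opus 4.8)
The plan is to prove the conclusion by induction on the number $N$ of elements in the family, the inductive hypothesis being the full statement of the theorem for every family of size $<N$. First I would reduce to the case $S=\{1,\dots,N\}$: for a general subset $S=\{s_1<\cdots<s_m\}$, the elements $g_{s_1},\dots,g_{s_m},h_{s_1},\dots,h_{s_m}$ (reindexed in increasing order) again satisfy all of \eqref{e gh}--\eqref{e hhhg=hghh}, and potential invertibility is inherited, so the theorem for this sub-family of size $m$ gives $g_Sh_S=h_Sg_S$. Thus it suffices to show $g_Ng_{N-1}\cdots g_1\,h_Nh_{N-1}\cdots h_1=h_Nh_{N-1}\cdots h_1\,g_Ng_{N-1}\cdots g_1$.

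The cases $N\le 1$ are immediate, the case $N=1$ being \eqref{e gh}. For the inductive step ($N\ge 2$), the key move is to apply Lemma~\ref{lem:strong-commute-abc} to the ``two endpoints versus the middle'' block decomposition: take $g_a=g_1$, $g_b=g_{N-1}g_{N-2}\cdots g_2$, $g_c=g_N$, and likewise $h_a=h_1$, $h_b=h_{N-1}\cdots h_2$, $h_c=h_N$ (when $N=2$ the middle block is empty, $g_b=h_b=1$, and Lemma~\ref{lem:strong-commute-abc} degenerates to Lemma~\ref{lem:strong-commute-ab}). Products of simultaneously potentially invertible elements are again simultaneously potentially invertible, so all six block elements qualify. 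The prerequisites \eqref{eq:bB=Bb}--\eqref{eq:cbCB=CBcb} of Lemma~\ref{lem:strong-commute-abc} become
\[
g_bh_b=h_bg_b,\qquad g_{N-1}\cdots g_1\,h_{N-1}\cdots h_1=h_{N-1}\cdots h_1\,g_{N-1}\cdots g_1,\qquad g_N\cdots g_2\,h_N\cdots h_2=h_N\cdots h_2\,g_N\cdots g_2,
\]
which are precisely the assertions of the theorem for the sub-families indexed by $\{2,\dots,N-1\}$, $\{1,\dots,N-1\}$, and $\{2,\dots,N\}$ respectively — all of size $<N$, hence available by induction. Lemma~\ref{lem:strong-commute-abc} then reduces the goal $g_cg_bg_ah_ch_bh_a=h_ch_bh_ag_cg_bg_a$ (which, with these block assignments, is exactly the identity $g_N\cdots g_1\,h_N\cdots h_1=h_N\cdots h_1\,g_N\cdots g_1$ we want) to its two companion relations
\begin{align*}
g_N(g_{N-1}\cdots g_2)[g_1,h_N]&=[h_1,g_N](g_{N-1}\cdots g_2)g_1,\\
h_N(h_{N-1}\cdots h_2)[h_1,g_N]&=[h_1,g_N](h_{N-1}\cdots h_2)h_1.
\end{align*}

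The first of these I would prove by the same device as in the proof of Lemma~\ref{l for super}. Writing $z=[g_1,h_N]$ and $w=[h_1,g_N]$, relation \eqref{e ggh} with $(a,b)=(1,N)$ gives $g_Nz=wg_1$, and \eqref{e gggh} with $(a,b,c)=(1,b,N)$ for $1<b<N$ gives $g_Ng_bz=wg_bg_1$. Passing to an extension of $R$ in which $g_1,\dots,g_N$ are invertible and putting $r=g_N^{-1}w=zg_1^{-1}$, cancellation of $g_N$ on the left and $g_1$ on the right in the second family yields $g_br=rg_b$ for $1<b<N$; hence $r$ commutes with $g_{N-1}\cdots g_2$, and
\[
g_N(g_{N-1}\cdots g_2)z=g_N(g_{N-1}\cdots g_2)rg_1=g_Nr(g_{N-1}\cdots g_2)g_1=w(g_{N-1}\cdots g_2)g_1,
\]
which is the desired identity (both sides lie in $R$, so this is valid there). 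The second companion identity follows identically from \eqref{e hhg=hgh} and \eqref{e hhhg=hghh}, now passing to an extension in which the $h_i$ are invertible.

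The genuinely new ingredient is the choice of block split: separating the two endpoints from the middle is what makes the companion relations of Lemma~\ref{lem:strong-commute-abc} collapse to the single-index boundary commutators displayed above, which are exactly the ones controlled by the given hypotheses \eqref{e ggh}--\eqref{e hhhg=hghh}. Beyond that, the only thing demanding care is the bookkeeping of the induction: checking that each prerequisite of Lemma~\ref{lem:strong-commute-abc} for the block elements is genuinely an instance of the theorem for a strictly smaller family (so the recursion is well-founded), and that the degenerate case $N=2$ of an empty middle block is cleanly subsumed.
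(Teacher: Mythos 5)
Your proposal is correct and is essentially the paper's own proof: the same endpoints-versus-middle block decomposition fed into Lemma~\ref{lem:strong-commute-abc} (degenerating to Lemma~\ref{lem:strong-commute-ab} when the middle is empty), with the two companion relations obtained exactly as in Lemma~\ref{l for super} from \eqref{e ggh}--\eqref{e gggh} and \eqref{e hhg=hgh}--\eqref{e hhhg=hghh}, and the prerequisites supplied by induction on the size of the index set. The only cosmetic difference is that you first reduce to $S=\{1,\dots,N\}$ by reindexing, whereas the paper runs the induction directly over general subsets $S$.
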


\pagebreak[3]

\begin{proof}
First, we claim that for any subset $S\!=\!\{s_1<\cdots<s_m\}\subset\{1,\dots,N\}$,
one~has
\begin{equation}
\label{eq:[hg]ggg}
[h_{s_1}, g_{s_m}]g_{s_{m-1}}g_{s_{m-2}}\cdots g_{s_1}
=g_{s_m}g_{s_{m-1}}\cdots g_{s_2}[g_{s_1},h_{s_m}].
\end{equation}
This follows by applying Lemma~\ref{l for super}
with $z=[h_{s_1},g_{s_m}]$ and $z'=[g_{s_1},h_{s_m}]$.
(Conditions \eqref{e deg1N 2}--\eqref{e deg1N 3} hold by \eqref{e ggh}--\eqref{e gggh}.)
Again applying Lemma~\ref{l for super}, this time
with $z=z'=[h_{s_1},g_{s_m}]$
(and relying on \eqref{e hhg=hgh}--\eqref{e hhhg=hghh}),
we get
\begin{equation}
\label{eq:[hg]hhh}
[h_{s_1}, g_{s_m}]h_{s_{m-1}}h_{s_{m-2}}\cdots h_{s_1}
=h_{s_m}h_{s_{m-1}}\cdots h_{s_2}[h_{s_1},g_{s_m}].
\end{equation}

We now prove $g_Sh_S=h_Sg_S$ by induction
on~$m=|S|$.
The base case $m=1$ is given in \eqref{e gh}.
It remains to invoke Lemma~\ref{lem:strong-commute-abc}
with
\begin{alignat*}{3}
g_a &= g_{s_1}\,,\quad & g_b &= g_{s_{m-1}}\cdots g_{s_2}\,,\quad &
g_c &= g_{s_m}\,, \\
h_a &= h_{s_1}\,,\quad & h_b &=h_{s_{m-1}}\cdots h_{s_2}\,,\quad &
h_c &= h_{s_m}\,,
\end{alignat*}
making use of \eqref{eq:[hg]ggg} and~\eqref{eq:[hg]hhh}.
\end{proof}

\begin{proof}[Proof of Theorem~\ref{t half group ROT}]
We need to show that relations
\begin{align}
g_a h_a &= h_a g_a &&\text{for all $1 \leq a  \leq N$}; \label{eq:gh=hg} \\
g_bg_a h_bh_a &= h_bh_a g_bg_a &&\text{for all $1 \leq a < b \leq N$}; \label{eq:gghh}\\
g_cg_bg_a h_ch_bh_a &= h_ch_bh_a g_cg_bg_a &&\text{for all $1 \leq a <
  b < c \leq N$};\label{eq:ggghhh}\\
g_bg_a(h_b + h_a) &= (h_b + h_a) g_bg_a && \text{for all $1 \leq a < b \leq N$}; \label{e linear 2}\\
g_cg_bg_a(h_c + h_b + h_a) &=  (h_c + h_b + h_a) g_cg_bg_a&& \text{for all $1 \leq a < b < c \leq N$} \label{e linear 3}
\end{align}
imply $g_S h_S = h_S g_S$ for all subsets~$S$.
(The other conclusion is by Theorem~\ref{t half ROT}.)
By Theorem~\ref{th:master-criterion},
the claim will follow once we have checked conditions \eqref{e gh}--\eqref{e hhhg=hghh}.

Relation~\eqref{eq:gh=hg} is the same as~\eqref{e gh}.
By Lemma~\ref{l deg1 super 1}, relations \eqref{eq:gh=hg} and~\eqref{e linear 2}
imply~\eqref{e ggh}.
By Lemma~\ref{l deg1 super 2}, relations \eqref{eq:gh=hg} and \eqref{e
  linear 2}--\eqref{e linear 3}
imply~\eqref{e gggh}.
Finally, by Lemmas~\ref{lem:strong-commute-abc}--\ref{lem:strong-commute-ab},
relations \eqref{e ggh}--\eqref{e gggh} and \eqref{eq:gh=hg}--\eqref{eq:ggghhh}
imply \eqref{e hhg=hgh}--\eqref{e hhhg=hghh}.
\end{proof}

\begin{corollary}
\label{c 2variable linear strengthened v2}
Let $g_1, \ldots, g_N, h_1, \ldots, h_N$ be potentially invertible
elements of a ring $R$ satisfying
\begin{align}
[g_a, h_b]&=[h_a,g_b]    && \text{for all $1 \le a < b \le
  N$}; \label{e switch v2}
\\
[g_a,h_a] &= 0 &&\text{for all $1 \leq a  \leq N$}; \label{e gh'}
\\
g_b[g_a,h_b] &= [h_a,g_b]g_a &&\text{for all $1 \leq a < b \leq N$};
\label{e ggh'}
\\
g_cg_b[g_a,h_c] &= [h_a, g_c]g_bg_a &&\text{for all $1 \leq a < b < c
  \leq N$};
\label{e gggh'}
\\
h_b[h_a,g_b] &= [g_a,h_b]h_a &&\text{for all $1 \leq a < b \leq N$}; \label{e hhg}\\
h_ch_b[h_a,g_c] &= [g_a, h_c]h_bh_a &&\text{for all $1 \leq a < b < c \leq N$}. \label{e hhhg}
\end{align}
Then $g_Sh_S=h_Sg_S$ for any $S \subset \{1,\ldots, N\}$.
\end{corollary}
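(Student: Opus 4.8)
The plan is to deduce this corollary directly from Theorem~\ref{th:master-criterion}, using the extra hypothesis~\eqref{e switch v2} purely as a rewriting device. First I would observe that conditions \eqref{e gh'}, \eqref{e ggh'}, and~\eqref{e gggh'} are literally the same as conditions \eqref{e gh}, \eqref{e ggh}, and~\eqref{e gggh} of Theorem~\ref{th:master-criterion}, so nothing needs to be done for those.

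The only point requiring attention is matching the ``$h$-side'' relations \eqref{e hhg}--\eqref{e hhhg} to \eqref{e hhg=hgh}--\eqref{e hhhg=hghh}. Here I would invoke~\eqref{e switch v2}: for $1\le a<b\le N$ it gives $[g_a,h_b]=[h_a,g_b]$, so \eqref{e hhg} reads $h_b[h_a,g_b]=[h_a,g_b]h_a$, which is exactly~\eqref{e hhg=hgh}; and for $1\le a<b<c\le N$ the instance of~\eqref{e switch v2} with the pair $(a,c)$ gives $[g_a,h_c]=[h_a,g_c]$, so \eqref{e hhhg} reads $h_ch_b[h_a,g_c]=[h_a,g_c]h_bh_a$, which is exactly~\eqref{e hhhg=hghh}. (I would note explicitly that the pair $(a,c)$ lies in the stated range $1\le a<c\le N$, so this application of~\eqref{e switch v2} is legitimate.)

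At this stage all five hypotheses \eqref{e gh}--\eqref{e hhhg=hghh} of Theorem~\ref{th:master-criterion} have been verified for the potentially invertible elements $g_1,\dots,g_N,h_1,\dots,h_N$, and that theorem yields $g_Sh_S=h_Sg_S$ for every $S\subset\{1,\dots,N\}$, completing the proof.

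I do not anticipate a genuine obstacle: all the substantive work is already contained in Theorem~\ref{th:master-criterion} (and, below it, in Lemmas~\ref{l for super}, \ref{lem:strong-commute-abc}, and~\ref{lem:strong-commute-ab}), and the role of~\eqref{e gh'} here is vestigial since it merely records $[g_a,h_a]=0$, i.e.\ \eqref{e gh}. The only thing worth double-checking is the bookkeeping of indices when applying~\eqref{e switch v2}, which is why I would spell out the $(a,b)$ and $(a,c)$ cases separately as above.
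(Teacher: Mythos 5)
Your proposal is correct and is essentially identical to the paper's proof: the paper likewise substitutes \eqref{e switch v2} into \eqref{e hhg}--\eqref{e hhhg} to obtain \eqref{e hhg=hgh}--\eqref{e hhhg=hghh} and then applies Theorem~\ref{th:master-criterion}. Your index bookkeeping for the $(a,b)$ and $(a,c)$ instances of \eqref{e switch v2} is accurate.
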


\begin{proof}
Substitute~\eqref{e switch v2} into
\eqref{e hhg}--\eqref{e hhhg} to get
\eqref{e hhg=hgh}--\eqref{e hhhg=hghh};
then apply Theorem~\ref{th:master-criterion}.
\end{proof}

\begin{proof}[Proof of Theorem~\ref{t 2variable linear strengthened}]
We will use Corollary~\ref{c 2variable linear strengthened v2} to show
that \eqref{e linear 1}--\eqref{e linear 3b}
imply  $g_Sh_S= h_Sg_S$ for all subsets $S$.
(The other conclusions are by Theorem~\ref{t half ROT} and Remark~\ref{rem:add-rule-of-two}.)
Relations \eqref{e switch v2}--\eqref{e gh'}
are equivalent to \eqref{e linear 1}--\eqref{e linear 2sum}.
Relations \eqref{e ggh'}--\eqref{e gggh'}
(which are identical to \eqref{e ggh}--\eqref{e gggh})
are checked precisely as in the proof of
Theorem~\ref{t half group ROT}, using \eqref{e linear 22}--\eqref{e
  linear 33} and Lemmas~\ref{l deg1 super 1}--\ref{l deg1 super 2}.
In~the same way, we use \eqref{e linear 2b}--\eqref{e linear 3b}
to obtain \eqref{e hhg}--\eqref{e hhhg}.
\end{proof}

\begin{proof}[Proof of Theorem \ref{t es commute uv 2}]
Set $g_i = 1+xu_i$ and $h_i = 1+yv_i$ for $i = 1,\ldots, N$.
(Here, as before, we are operating in the ring of formal power series
in two variables $x$ and~$y$.)
Then the $g_i$ and the $h_i$ are invertible.
Furthermore, the relations \eqref{e uv 12vars AB}--\eqref{e uvvv
  23vars AB} imply the relations \eqref{e linear 1}--\eqref{e linear
  3b}.
Applying Theorem \ref{t 2variable linear strengthened},
we conclude that $g_Sh_S=h_Sg_S$
for any $S \subset \{1,\ldots, N\}$.
Equivalently,
$e_k(\mathbf{u}_S)e_\ell(\mathbf{v}_S)=e_\ell(\mathbf{v}_S)e_k(\mathbf{u}_S)$
for all $k$, $\ell$, as desired.
\end{proof}

\begin{proof}[Proof of Theorem~\ref{t half group ROTv2}]
Set  $h_i = 1+yf_i\in R[y]$;
here, as before,  $y$~is a formal variable commuting with all elements of  $R$.
One then checks that the two statements in Theorem~\ref{t half group ROT}
translate into the respective
statements in Theorem~\ref{t half group ROTv2}. The claim follows.
\end{proof}

\begin{proof}[Proof of Theorem \ref{t half quadratic half group ROT}]
Same argument as above, 
this time with
$h_i = 1+yf_i \in R[y]/(y^3)$.
\end{proof}

We conclude this section with additional results on the Multiplicative
Rule of Three, cf.\ Definition~\ref{def:mult-rot}.

\begin{corollary}
\label{c half group ROT}
The Multiplicative Rule of Three holds for
$g_i = 1+\alpha_{i1} x +\alpha_{i2} x^2 +\cdots$
and
$h_i = 1+yv_i$,
for any $\alpha_{ik}\in \mathcal{A}=\QQ\langle u_1, \ldots, u_M, v_1, \ldots, v_M\rangle$.
\end{corollary}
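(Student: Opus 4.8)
The plan is to obtain this as an immediate consequence of Theorem~\ref{t half group ROTv2}, via the same substitution used in the proof of that theorem. Fix a two-sided ideal $I\subset\mathcal{A}$ and put $R=\mathcal{A}/I$; abusing notation, I will continue to write $g_i$, $h_i$, $v_i$, $\alpha_{ik}$ for the images in $R$ (and in its power series rings) of the corresponding elements of $\mathcal{A}$ and $\mathcal{A}[[x,y]]$. I would work in the ring $R[[x]]$ of formal power series in the single central variable~$x$ over~$R$, observing that each $g_i = 1+\alpha_{i1}x+\alpha_{i2}x^2+\cdots$ has constant term~$1$ and is therefore invertible, hence potentially invertible, in $R[[x]]$, while $v_1,\dots,v_N$ may be regarded as elements of $R[[x]]$.

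Next I would record the elementary observation, exactly as in \eqref{eq:hS-linear}, that $h_i=1+yv_i$ yields $h_S=\sum_{\ell\ge 0} y^\ell\,e_\ell(\mathbf{v}_S)$ in $R[[x,y]]$. Since $y$ is central and $g_S$ involves no~$y$, the identity $g_S h_S = h_S g_S$ in $R[[x,y]]$ holds if and only if $g_S$ commutes with $e_\ell(\mathbf{v}_S)$ in $R[[x]]$ for every~$\ell$. Consequently, for the quotient $R=\mathcal{A}/I$, the condition ``$g_S h_S\equiv h_S g_S \bmod I$ for all $S\subset\{1,\dots,N\}$'' is equivalent to ``$g_S\,e_\ell(\mathbf{v}_S)=e_\ell(\mathbf{v}_S)\,g_S$ in $R[[x]]$ for all $S$ and all $\ell$,'' and likewise with ``all $S$'' replaced by ``$|S|\le 3$'' throughout. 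Applying Theorem~\ref{t half group ROTv2} with ambient ring $R[[x]]$, potentially invertible elements $g_1,\dots,g_N$, and elements $v_1,\dots,v_N$, these two elementary-symmetric-function conditions are equivalent; translating back through the displayed equivalence, so are the two product conditions. As $I$ was arbitrary, the Multiplicative Rule of Three holds for the given $g_i$ and $h_i$.

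I do not expect a genuine obstacle here: the only point requiring care is verifying that Theorem~\ref{t half group ROTv2} applies verbatim, which it does — that theorem imposes no relation linking the $g_i$ to the $v_i$, so allowing the coefficients $\alpha_{ik}$ to involve the variables $v_j$ (or the $u_j$) is harmless, and potential invertibility of the $g_i$ is automatic because they are units in $R[[x]]$. As a sanity check one notes that the $|S|=1$ instance of the hypothesis is precisely $[g_i,v_i]=0$, i.e.\ $[\alpha_{ik},v_i]=0$ for all $k$; this appears on both sides of the equivalence, so nothing is lost by its presence.
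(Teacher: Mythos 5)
Your proof is correct and is essentially the paper's own approach: the paper's written proof extracts just the coefficient of $y$ from $[g_S,h_S]\equiv 0$ to get $[g_S,\sum_{i\in S}h_i]\equiv 0$ and then invokes Theorem~\ref{t half group ROT}, whereas you extract all powers of $y$ (obtaining $[g_S,e_\ell(\mathbf{v}_S)]\equiv 0$ for every $\ell$) and invoke Theorem~\ref{t half group ROTv2} --- but that theorem is itself deduced from Theorem~\ref{t half group ROT} by precisely the substitution $h_i=1+yv_i$, and the paper's remark preceding the corollary already notes this alternative derivation. The two arguments are thus equivalent repackagings of the same idea, and your application of Theorem~\ref{t half group ROTv2} in $R[[x]]$ is legitimate.
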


\begin{proof}
Let $I$ be an ideal in~$\mathcal{A}$, and let $I[[x,y]]$ be the
ideal generated by $I$ inside~$\mathcal{A}[[x,y]]$.
Suppose $[g_S, h_S] \equiv 0\bmod I[[x,y]]$ for all $|S| \!\le\! 3$.
Taking the coefficient of~$y$, we get $[g_S, \sum_{i \in S}v_i] \equiv 0\bmod I[[x,y]]$
and consequently $[g_S, \sum_{i \in S}h_i] \equiv 0\bmod I[[x,y]]$.
It remains to apply Theorem~\ref{t half group ROT} (with $R=\mathcal{A}[[x,y]]/I[[x,y]]$).
\end{proof}

One can more generally identify specific conditions on the
$\beta_{ij}$ in Conjecture \ref{cj pairs commute uv}
which ensure that
$g_S (\sum_{i \in S}h_i) = (\sum_{i \in S}h_i) g_S$ for all
$|S| \le 3$.  Here is one example.

\begin{corollary}
\label{c cj pairs commute uv version3}
The Multiplicative Rule of Three holds for
\mbox{$g_i = 1+\alpha_{i1} x +\alpha_{i2} x^2 +\cdots$} and
$h_i = 1+ \beta_{i1} yv_i + \beta_{i4}y^4v_i^4$, for any $\alpha_{ik}\in \mathcal{A}$ and $\beta_{i1}, \beta_{i4} \in \QQ$.
\end{corollary}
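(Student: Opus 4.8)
The plan is to imitate the proof of Corollary~\ref{c half group ROT}, reducing the statement to Theorem~\ref{t half group ROT}. Recall that Theorem~\ref{t half group ROT} says that, for potentially invertible $g_i,h_i$, the conditions $[g_S,h_S]=0$ together with $[g_S,\sum_{i\in S}h_i]=0$ for $|S|\le 3$ already force $[g_S,h_S]=0$ for all $S$. So it suffices to recover the ``sum'' relations $[g_S,\sum_{i\in S}h_i]\equiv 0$ for $|S|\le 3$ from the ``product'' relations $[g_S,h_S]\equiv 0$ for $|S|\le 3$ --- and this should be done purely by extracting coefficients of powers of $y$, as in Corollary~\ref{c half group ROT}.

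In detail, I would fix an ideal $I\subset\mathcal{A}$ and assume $[g_S,h_S]\equiv 0\bmod I[[x,y]]$ for every $S$ with $|S|\le 3$. First note that $g_S$ is a power series in $x$ alone, so, writing $h_S=\sum_k c_k(S)\,y^k$ with $c_k(S)\in\mathcal{A}[[x]]$, the vanishing of $[g_S,h_S]$ is equivalent to $[g_S,c_k(S)]\equiv 0\bmod I[[x,y]]$ for every $k$. The key step is then a small combinatorial computation of the relevant $c_k(S)$ for $|S|\le 3$: expanding $h_S=\prod_{i\in S}(1+\beta_{i1}yv_i+\beta_{i4}y^4v_i^4)$ in decreasing order of indices, one sees that $c_1(S)=\sum_{i\in S}\beta_{i1}v_i$ and $c_4(S)=\sum_{i\in S}\beta_{i4}v_i^4$. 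The point for $c_4$ is that a contribution to $y^4$ can only be assembled from parts of $y$-degree $1$ or $4$, and $4$ is not a sum of at least two such parts using at most three of them; hence for $|S|\le 3$ no ``cross term'' $\beta_{i_11}\cdots\beta_{i_41}v_{i_1}\cdots v_{i_4}$ survives, leaving only the pure fourth powers. Therefore, for $|S|\le 3$,
\[
\Bigl[g_S,\textstyle\sum_{i\in S}h_i\Bigr]
=\Bigl[g_S,\,|S|+\textstyle\sum_{i\in S}(\beta_{i1}yv_i+\beta_{i4}y^4v_i^4)\Bigr]
=y\,[g_S,c_1(S)]+y^4\,[g_S,c_4(S)]\equiv 0\bmod I[[x,y]].
\]

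Finally I would apply Theorem~\ref{t half group ROT} over the ring $\mathcal{A}[[x,y]]/I[[x,y]]$: each $g_i$ and each $h_i$ has constant term $1$, hence is invertible there, so potential invertibility holds, and we have just verified $[g_S,h_S]\equiv 0$ and $[g_S,\sum_{i\in S}h_i]\equiv 0$ for all $|S|\le 3$; the theorem then yields $[g_S,h_S]\equiv 0$ for all $S$, which is precisely the Multiplicative Rule of Three for these $g_i,h_i$ (the converse implication in Definition~\ref{def:mult-rot} being trivial).

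I do not expect a serious obstacle here; the only place needing care is the coefficient extraction, i.e.\ checking that for $|S|\le 3$ the degree-$1$ and degree-$4$ parts of $h_S$ are exactly $\sum_{i\in S}\beta_{i1}v_i$ and $\sum_{i\in S}\beta_{i4}v_i^4$. This is exactly where the specific exponents $1$ and $4$ enter: if $h_i$ also had, say, a $y^2v_i^2$ or $y^3v_i^3$ term, then the degree-$4$ part of $h_S$ would acquire genuine cross terms already for $|S|=2$, and the reduction to $\sum_{i\in S}h_i$ --- and with it this proof strategy --- would break down.
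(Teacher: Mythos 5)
Your proposal is correct and is essentially identical to the paper's proof, which likewise reduces to Theorem~\ref{t half group ROT} by extracting the coefficients of $y^1$ and $y^4$ from $[g_S,h_S]\equiv 0$ for $|S|\le 3$ and noting that these coefficients of $h_S$ are exactly $\sum_{i\in S}\beta_{i1}v_i$ and $\sum_{i\in S}\beta_{i4}v_i^4$ (no cross terms since $4$ cannot be assembled from at most three parts in $\{1,4\}$ other than a single $4$). Your write-up simply fills in the details the paper leaves implicit.
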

\begin{proof}
Same argument as above, this time noting that for $d = 1,4$ and  $|S|
\le 3$, taking the coefficient of  $y^d$ in
$[g_S,h_S] \equiv 0$ yields $[g_S, \sum_{i \in S}\beta_{id}v_i^d] \equiv 0$.
\end{proof}

\pagebreak[3]

\section{Proof of Theorem~\ref{t super}}
\label{sec:proof-super}

%

We will need the following slight generalization of
Lemma~\ref{lem:strong-commute-abc}.

\begin{lemma}
\label{lem:strong-commute-abc-with-z}
Let $R$ be a ring, and let $g_a,g_b,g_c,h_a,h_b,h_c$ be
potentially invertible elements of~$R$ satisfying
\eqref{eq:bB=Bb}--
\eqref{eq:cbCB=CBcb}.
Let $z\in R$.
Then any two of the following conditions imply the third:
\begin{align*}
g_cg_bg_ah_ch_bh_a &= h_ch_bh_ag_cg_bg_a,\\
g_cg_b[g_a,h_c] &= z g_bg_a,\\
h_ch_b[h_a,g_c] &=z h_bh_a.
\end{align*}
\end{lemma}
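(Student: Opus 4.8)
The plan is to follow the proof of Lemma~\ref{lem:strong-commute-abc} almost verbatim, substituting the given element $z$ for the specific choice $[h_a,g_c]$ made there. Working in an extension of~$R$ in which $h_b$ is invertible, I would start from the identity~\eqref{eq:[cba,CBA]} furnished by Lemma~\ref{lem:cba*CBA} and add to it the \emph{trivial} identity
\[
0 = -z\,g_bg_ah_bh_a + z\,h_bh_ag_bg_a + z\,[g_bg_a,h_bh_a],
\]
which is trivial because $z[g_bg_a,h_bh_a]=z\,g_bg_a h_bh_a - z\,h_bh_a g_bg_a$. This produces
\begin{align*}
[g_cg_bg_a,h_ch_bh_a] &= \bigl(g_cg_b[g_a,h_c] - z\,g_bg_a\bigr)h_bh_a - \bigl(h_ch_b[h_a,g_c] - z\,h_bh_a\bigr)g_bg_a \\
&\quad + z\,[g_bg_a,h_bh_a] + [g_cg_b,h_ch_b]\,h_b^{-1}g_ah_bh_a \\
&\quad + h_ch_bg_ch_b^{-1}\bigl([g_bg_a,h_bh_a]-[g_b,h_b]h_b^{-1}g_ah_bh_a\bigr).
\end{align*}

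Next I would invoke the standing hypotheses~\eqref{eq:bB=Bb}--\eqref{eq:cbCB=CBcb}: relation~\eqref{eq:baBA=BAba} gives $[g_bg_a,h_bh_a]=0$, relation~\eqref{eq:bB=Bb} gives $[g_b,h_b]=0$, and relation~\eqref{eq:cbCB=CBcb} gives $[g_cg_b,h_ch_b]=0$, so the last three summands vanish and
\[
[g_cg_bg_a,h_ch_bh_a] = \bigl(g_cg_b[g_a,h_c] - z\,g_bg_a\bigr)h_bh_a - \bigl(h_ch_b[h_a,g_c] - z\,h_bh_a\bigr)g_bg_a .
\]
Abbreviating $P = g_cg_b[g_a,h_c] - z\,g_bg_a$ and $Q = h_ch_b[h_a,g_c] - z\,h_bh_a$, this reads $[g_cg_bg_a,h_ch_bh_a] = P\,h_bh_a - Q\,g_bg_a$, and the three conditions in the statement become exactly ``$P\,h_bh_a - Q\,g_bg_a = 0$'', ``$P=0$'', and ``$Q=0$''.

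Finally I would deduce the ``any two imply the third'' claim. If $P=0$ and $Q=0$, the left-hand side is $0$. If the left-hand side is $0$ and $P=0$, then $Q\,g_bg_a=0$; since $g_a$ and $g_b$ are potentially invertible, so is $g_bg_a$, whence $Q=0$. Symmetrically, vanishing of the left-hand side together with $Q=0$ forces $P\,h_bh_a=0$, and potential invertibility of $h_bh_a$ yields $P=0$. The only use of potential invertibility (beyond the invertibility of $h_b$ already needed to make sense of~\eqref{eq:[cba,CBA]}) is in these two final cancellations, and they are routine; I do not anticipate any genuine obstacle, as the lemma is a direct adaptation of Lemma~\ref{lem:strong-commute-abc}.
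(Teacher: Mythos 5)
Your proposal is correct and follows essentially the same route as the paper: the paper's proof consists precisely of the modified identity you derive (obtained from \eqref{eq:[cba,CBA]} by the trivial identity with $z$ in place of $[h_a,g_c]$), after which the hypotheses \eqref{eq:bB=Bb}--\eqref{eq:cbCB=CBcb} kill the residual terms and the two cancellations via potential invertibility of $g_bg_a$ and $h_bh_a$ finish the argument. Your write-up just makes explicit the steps the paper leaves to the reader.
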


\begin{proof}
This follows from a modified version of the identity in the proof
of Lemma~\ref{lem:strong-commute-abc}:
\begin{align*}
&[g_cg_bg_a,h_ch_bh_a]=(g_cg_b[g_a,h_c]-z g_bg_a)h_bh_a
-(h_ch_b[h_a,g_c]-z h_bh_a)g_bg_a\\
&\!+\!z[g_bg_a,h_bh_a]
\!+\![g_cg_b,h_ch_b]h_b^{-1}g_ah_bh_a
\!+\!h_ch_bg_ch_b^{-1}([g_bg_a,h_bh_a]\!-\![g_b,h_b]h_b^{-1}g_ah_bh_a).
\qedhere
\end{align*}
\end{proof}


\begin{lemma}
\label{l super gen fun}
Let $A$ be a ring, and let $u_1, \ldots, u_N, v_1, \ldots,
v_N \in A$ satisfy
\begin{equation}
[u_a,v_b] = [v_a,u_b]  \quad \text{for all $1 \le a < b  \le
  N$}.  \label{e lin super 2sum}
\end{equation}
For every $i\in\{1,\dots,N\}$, let $\alpha_i,\beta_i$ be central elements of  $A$
such that $1 + \alpha_i u_i$ and
$1 + \beta_i v_i$ are invertible, and define $g_i,h_i\in A$
by making one of the following two choices:
\begin{equation}
\label{eq:gi-hi-cases}
\text{either\ \ }
\begin{cases}
g_i = 1 + \alpha_i u_i\,,\\
h_i = 1 + \beta_i v_i\,,
\end{cases}
\text{or\ \ }
\begin{cases}
g_i = (1 + \alpha_i u_i)^{-1}\,,\\
h_i = (1 + \beta_i v_i)^{-1}\,.
\end{cases}
\end{equation}
Suppose that the following relations are satisfied:
\begin{align}
[v_a,g_a] &= 0  \qquad \text{for all $1 \le a \le N$}; \label{e lin super 1}\\
[v_b + v_a, g_bg_a] &= 0 \qquad \text{for all $1 \leq a < b \leq N$}; \label{e lin super 2}\\
[v_c + v_b + v_a, g_cg_bg_a] &= 0 \qquad \text{for all $1 \leq a < b < c \leq N$}; \label{e lin super 3} \\
[u_a,h_a] &= 0  \qquad \text{for all $1 \le a \le N$}; \label{e lin super 1hp}\\
[u_b + u_a, h_bh_a] &= 0 \qquad \text{for all $1 \leq a < b \leq N$}; \label{e lin super 2b}\\
[u_c + u_b + u_a, h_ch_bh_a] &= 0 \qquad \text{for all $1 \leq a < b < c \leq N$}. \label{e lin super 3b}
\end{align}
Then
$g_N g_{N-1} \cdots g_1\, h_N h_{N-1}\cdots h_1
= h_N h_{N-1} \cdots h_1 \,g_N g_{N-1} \cdots g_1$.
\end{lemma}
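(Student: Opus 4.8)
The plan is to prove the asserted identity by induction on~$N$; by relabeling it suffices to treat $S=\{1,\dots,N\}$, the case of a general subset then being automatic. The engine is Lemma~\ref{lem:strong-commute-abc-with-z}, applied with
\[
g_a=g_1,\quad g_b=g_{N-1}\cdots g_2,\quad g_c=g_N,\qquad
h_a=h_1,\quad h_b=h_{N-1}\cdots h_2,\quad h_c=h_N
\]
(for $N\le 2$ one takes $g_b=h_b=1$). Its hypotheses \eqref{eq:bB=Bb}, \eqref{eq:baBA=BAba}, \eqref{eq:cbCB=CBcb} then read $[g_S,h_S]=0$ for $S=\{2,\dots,N-1\}$, $S=\{1,\dots,N-1\}$, and $S=\{2,\dots,N\}$ respectively; each of these sets has fewer than $N$ elements, and the hypotheses \eqref{e lin super 2sum}, \eqref{e lin super 1}--\eqref{e lin super 3b}, and the per-index invertibility assumptions all restrict to any sub-alphabet, so the inductive hypothesis delivers the three relations. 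It then remains to exhibit a single $z\in A$ with
\[
g_cg_b[g_a,h_c]=z\,g_bg_a
\qquad\text{and}\qquad
h_ch_b[h_a,g_c]=z\,h_bh_a ;
\]
granting these, the third of the equivalent conditions in Lemma~\ref{lem:strong-commute-abc-with-z} is exactly $g_{\{1,\dots,N\}}h_{\{1,\dots,N\}}=h_{\{1,\dots,N\}}g_{\{1,\dots,N\}}$, which closes the induction.

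To construct $z$ I would first record two ``commutator--slide'' identities. The hypotheses \eqref{e lin super 1}--\eqref{e lin super 3} are precisely those of Theorem~\ref{t half ROT} for the elements $v_i,g_i$, so $[\sum_{i\in S}v_i,g_S]=0$ for every $S$; Corollary~\ref{c half ROTv2} (with $S=\{1,\dots,N\}$) then gives
\[
[v_1,g_N]\,g_{N-1}\cdots g_1 = g_N\cdots g_2\,[g_1,v_N],
\]
that is, $[v_a,g_c]\,g_bg_a=g_cg_b\,[g_a,v_c]$ in the notation above. Applying the same to the pairs $u_i,h_i$ (using \eqref{e lin super 1hp}--\eqref{e lin super 3b}) yields $[u_a,h_c]\,h_bh_a=h_ch_b\,[h_a,u_c]$. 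Next, since $h_c$ equals $1+\beta_Nv_N$ or $(1+\beta_Nv_N)^{-1}$ and $g_a$ equals $1+\alpha_1u_1$ or $(1+\alpha_1u_1)^{-1}$, a routine computation rewrites $[g_a,h_c]$ in terms of $[g_1,v_N]$ (hence, through $g_a$, in terms of $[u_1,v_N]$) up to the central scalars $\alpha_1,\beta_N$ and a conjugation by $g_a$ and by $h_c$; symmetrically $[h_a,g_c]$ is governed by $[v_1,u_N]$. The relation linking the two sides is \eqref{e lin super 2sum}, $[u_1,v_N]=[v_1,u_N]$. In the cleanest case, when the indices $1$ and $N$ are both unbarred, no conjugations appear and the slide identity gives at once $g_cg_b[g_a,h_c]=\alpha_N\beta_N\,[v_1,u_N]\,g_bg_a$; by the evident symmetry $(u,g,\alpha)\leftrightarrow(v,h,\beta)$ of all the hypotheses (which fixes \eqref{e lin super 2sum}), the second identity holds with the same $z=\alpha_N\beta_N[v_1,u_N]=\alpha_N\beta_N[u_1,v_N]$.

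The hard part is the book-keeping when one or both of the endpoint indices is barred, so that $g_1$ or $g_N$ (and $h_1$ or $h_N$) is an inverse: then $[g_a,h_c]$ acquires a factor $h_c$ on each side of $[g_1,v_N]$, and $h_c$ does \emph{not} commute with the middle block $g_b=g_{N-1}\cdots g_2$. This is exactly the situation the \emph{free} parameter $z$ in Lemma~\ref{lem:strong-commute-abc-with-z} is designed to absorb (in contrast to the rigid $[h_a,g_c]$ of Lemma~\ref{lem:strong-commute-abc}): one carries the stray conjugating factors to the outside using $g_ch_c=h_cg_c$ (valid because $g_c$ commutes with $v_N$, hence with $h_c$, by \eqref{e lin super 1}) together with the inductive commutation $[g_{\{2,\dots,N\}},h_{\{2,\dots,N\}}]=0$, thereby rolling them into $z$. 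Verifying, in each of the four bar/unbar patterns for the pair $(1,N)$, that the two displayed identities share the \emph{same} $z$ — the matching again resting on \eqref{e lin super 2sum} — is the only genuinely delicate step; everything else is a direct appeal to Theorem~\ref{t half ROT}, Corollary~\ref{c half ROTv2}, and Lemma~\ref{lem:strong-commute-abc-with-z}.
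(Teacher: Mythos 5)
Your overall architecture coincides with the paper's: induct, split off the two extreme indices, feed the inductive hypotheses into Lemma~\ref{lem:strong-commute-abc-with-z} as \eqref{eq:bB=Bb}--\eqref{eq:cbCB=CBcb}, produce the two ``$z$-identities'' from the slide identities of Corollary~\ref{c half ROTv2}, and match the two $z$'s via \eqref{e lin super 2sum}. Your computation in the case where both endpoint indices are unbarred is correct, and indeed yields $z=\alpha_N\beta_N[v_1,u_N]=\alpha_N\beta_N[u_1,v_N]$, which is the paper's $z=\beta_N[v_m,g_N]$.

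However, the step you defer as ``the only genuinely delicate'' one is precisely where your argument has a genuine gap, and the sketch you give for it does not work. If, say, $h_N=(1+\beta_Nv_N)^{-1}$, then $[g_1,h_N]=-\beta_N\,h_N[g_1,v_N]h_N$, so $g_Ng_{N-1}\cdots g_2[g_1,h_N]$ contains a factor $h_N$ wedged between the middle block $g_{N-1}\cdots g_2$ and the commutator $[g_1,v_N]$. Nothing in the hypotheses, nor in the inductive statement $[g_{\{2,\dots,N\}},h_{\{2,\dots,N\}}]=0$ (which only says the two full products commute), lets you slide that single $h_N$ past $g_{N-1}\cdots g_2$; so you cannot ``roll it into~$z$'' as claimed, and the same obstruction appears for $[h_1,g_N]$ when $g_N$ is an inverse. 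The idea you are missing is the paper's reversal-and-inversion symmetry: the statement $P(m,n)$ is unchanged under $i\mapsto m+n-i$ together with $g_i\mapsto g_{m+n-i}^{-1}$, $h_i\mapsto h_{m+n-i}^{-1}$, which turns barred endpoints into unbarred ones and lets one assume that at least one endpoint, say $N$, is of the linear form in \eqref{eq:gi-hi-cases}. One then never expands the other endpoint at all: multiplying the two slide identities of Corollary~\ref{c half ROTv2} by the central scalars $\beta_N$ and $\alpha_N$ converts $[g_1,v_N]$ into $[g_1,h_N]$ and $[h_1,u_N]$ into $[h_1,g_N]$ using only the linearity of $g_N,h_N$, and \eqref{e lin super 2sum} identifies the resulting right-hand sides as $\beta_N[v_1,g_N]$ times the appropriate products. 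Without this reduction (or some substitute for it), the barred cases remain unproved.
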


Lemma \ref{l super gen fun} can be thought of as a fancy version of Theorem \ref{t
  2variable linear strengthened}.

\begin{proof}
Let  $P(m,n)$ be the statement of the lemma with
$1$ and~$N$ replaced by $m$ and~$n$, respectively.
We will prove  $P(m,n)$ by induction on $n-m$.
The case $n=m$ is the relation
\begin{equation}
[g_a,h_a] = 0 \quad \text{for all $1 \le a \le N$}, \label{e lin super
  1gh}
\end{equation}
which is immediate from~\eqref{eq:gi-hi-cases} and~\eqref{e lin super 1}
(or~\eqref{e lin super 1hp}).
Now assume $m<n$.

First notice that  $P(m,n)$ is equivalent to  $P(m,n)$ with
$g_{m+n-i}^{-1}$ in place of~$g_i$,
$h_{m+n-i}^{-1}$ in place of~$h_i$,
and $u_{m+n-i}$ (resp., $v_{m+n-i}$, $\alpha_{m+n-i}$,~$\beta_{m+n-i}$)
in place of~$u_i$ (resp.,~$v_i$, $\alpha_i$,~$\beta_i$).
For instance, $[v_c + v_b + v_a, g_cg_bg_a] = 0$ is
equivalent to $[v_c + v_b + v_a, g_a^{-1}g_b^{-1}g_c^{-1}] = 0$;
the set of these relations over all $m \leq a < b < c \leq n$ is
identical to the relations $[v_{n+m-c} + v_{n+m-b} + v_{n+m-a},
  g_{n+m-c}^{-1}g_{n+m-b}^{-1}g_{n+m-a}^{-1}] = 0$
over all $m \leq a < b < c \leq n$.

By the previous paragraph, we may assume that at least one of the following holds:
\begin{align}
&\text{$g_m = 1 + \alpha_m u_m$ and  $h_m = 1 + \beta_m v_m$}; \label{e m unbarred}\\
&\text{$g_n = 1 + \alpha_n u_n$ and  $h_n = 1 + \beta_n v_n$.} \label{e n unbarred}
\end{align}
Let us assume~\eqref{e n unbarred}, as the case of~\eqref{e
  m unbarred}
is handled similarly.
By \eqref{e lin super 2sum} and~\eqref{e n unbarred}, we have
\begin{align}
\alpha_n [u_m,  h_n] = \alpha_n \beta_n [u_m,  v_n] = \alpha_n \beta_n [v_m,  u_n] = \beta_n[v_m,  g_n].
\label{eq:[dm,hn]}
\end{align}
By Corollary \ref{c half ROTv2}, the relations \eqref{e lin super
  1}--\eqref{e lin super 3b} imply
\begin{align}
g_ng_{n-1}\cdots g_{m+1}[g_m,v_n] = [v_m, g_n]g_{n-1}g_{n-2}\cdots g_m,  \label{e halfrot cv4}\\
h_nh_{n-1}\cdots h_{m+1}[h_m,u_n] = [u_m, h_n]h_{n-1}h_{n-2}\cdots h_{m}. \label{e halfrot cv5}
\end{align}
Next multiply \eqref{e halfrot cv4}, \eqref{e halfrot cv5} by  $\beta_n$, $\alpha_n$, respectively,
and apply \eqref{e n unbarred} and \eqref{eq:[dm,hn]} to obtain
\begin{align}
g_ng_{n-1}\cdots g_{m+1}[g_m,h_n] = \beta_n[v_m, g_n]g_{n-1}g_{n-2}\cdots g_m,  \label{e halfrot cv6}\\
h_nh_{n-1}\cdots h_{m+1}[h_m,g_n] = \beta_n[v_m, g_n]h_{n-1}h_{n-2}\cdots h_{m}. \label{e halfrot cv7}
\end{align}
Now $P(m,n)$ follows by applying Lemma~\ref{lem:strong-commute-abc-with-z}
with $g_a = g_{m}$, $g_b = g_{n-1}\cdots g_{m+1}$, $g_c = g_{n}$,
$h_a = h_{m}$, $h_b =h_{n-1}\cdots h_{m+1}$, $h_c = h_{n}$,  $z = \beta_n[v_m, g_n]$,
making use of \eqref{e halfrot cv6}--\eqref{e halfrot cv7} and the
induction assumption.
\end{proof}

\begin{proof}[Proof of Theorem \ref{t super}]
The statement is proved by applying
Lemma~\ref{l super gen fun} with $A = R[[x,y]]$ and $g_i, h_i$ given by
\begin{align} g_i= \begin{cases}
1+xu_i & \text{ if $i$ is unbarred}\\
(1-xu_i)^{-1} & \text{ if $i$ is barred,}
\end{cases}
\qquad h_i= \begin{cases}
1+yv_i & \text{ if $i$ is unbarred}\\
(1-yv_i)^{-1} & \text{ if $i$ is barred.}
\end{cases} \label{e gh super def} \end{align}
(Thus $\alpha_i = x, \beta_i = y$ if $i$ is unbarred and $\alpha_i = -x, \beta_i = -y$ if $i$ is barred.)

We verify the relation \eqref{e lin super 2sum} using the $|S| \le 2$,
$k = 1$ cases of~\eqref{e 1d super}:
\[
[u_a,v_b] - [v_a,u_b] = [ u_b +  u_a,
  v_b +  v_a] - [u_a,v_a] - [u_b,v_b]= 0.
\]
The remaining relations \eqref{e lin super 1}--\eqref{e lin super 3b}
follow from \eqref{e 1d super}--\eqref{e 1d super2}, using the fact that
\begin{align*}
&g_{S} =\textstyle\sum_{k}x^k\, \esup{k}{\mathbf{u}_S},\\
&h_{S} =\textstyle\sum_{\ell}y^\ell\, \esup{\ell}{\mathbf{v}_S},
\end{align*}
for any $S \subset \{1,\ldots, N\}$.
So Lemma~\ref{l super gen fun} applies,
and Theorem \ref{t super} follows.
\end{proof}

\noindent
\textbf{Acknowledgments.}
We thank the participants of the conference
\emph{Lie and Jordan Algebras VI} (Bento Gon\c{c}alves, December 2015) for
stimulating discussions.
We thank Elaine~So for help typing and typesetting figures and the anonymous referee for a number of helpful comments.

\end{document}